\documentclass[12pt]{article}

\usepackage{fullpage}

\normalfont

\usepackage{amsmath}
\usepackage{amsthm,amssymb}
\usepackage{amsfonts}
\usepackage{txfonts}
\usepackage{graphics}
\usepackage{graphicx}
\usepackage[all]{xy}
\usepackage[usenames]{color}


 
\numberwithin{equation}{section}

\newtheorem{theorem}{Theorem}
\newtheorem{lemma}[theorem]{Lemma}
\newtheorem{proposition}[theorem]{Proposition}

\theoremstyle{definition}

\def\cC{\mathcal{C}}

\def\cF{\mathcal{F}}

\def\cN{\mathcal{N}}

\def\cR{\mathcal{R}}

\def\cX{\mathcal{X}}
\def\cV{\mathcal{V}}

\def\bbR{\mathbb{R}}

\def\vol{\operatorname{Vol}}

\def\eps{\varepsilon}

\newcommand{\pr}[1]{\mathbb{P}\left[#1\right]}
\newcommand{\expect}[1]{\mathbb{E}\left[#1\right]}
\DeclareMathOperator{\dist}{dist}




\def\dM{M}
\def\dG{G}

\begin{document}

\title{\bf The Normalized Graph Cut and Cheeger Constant: from Discrete to Continuous}

\author{
Ery Arias-Castro\footnote{Department of Mathematics, University of California, San Diego, USA},
Bruno Pelletier\footnote{D\'epartement de Math\'ematiques, IRMAR -- UMR CNRS 6625, Universit\'e Rennes II, France}
and Pierre Pudlo\footnote{D\'epartement de Math\'ematiques, I3M -- UMR CNRS 5149, 
Universit\'e Montpellier II, France}
}

\date{\today}

\maketitle

\noindent {\bf Abstract.}
Let $\dM$ be a bounded domain of $\bbR^d$ with smooth boundary.  We relate the Cheeger constant of $\dM$ and the conductance of a neighborhood graph defined on a random sample from $\dM$.  By restricting the minimization defining the latter over a particular class of subsets, we obtain consistency (after normalization) as the sample size increases, and show that any minimizing sequence of subsets has a subsequence converging to a Cheeger set of $\dM$. \\

\noindent \emph{Index Terms}: Cheeger isoperimetric constant of a manifold, conductance of a graph, neighborhood graph, spectral clustering, U-processes, empirical processes.

\vspace{0.2cm}
\noindent \emph{AMS  2000 Classification}: 62G05, 62G20.

\section{Introduction and main results}

The Cheeger isoperimetric constant may be defined for a Euclidean domain as well as for a graph.  In either case it quantifies how well the set can be bisected or `cut' into two pieces that are as little connected as possible.  Motivated by recent developments in spectral clustering and computational geometry, we relate the Cheeger constant of a neighborhood graph defined on a sample from a domain and the Cheeger constant of the domain itself.

Given a graph $\dG$ with weights $\{\delta_{ij}\}$, the {\it normalized cut of} a subset $S \subset \dG$ is defined as
\begin{equation}\label{eq:hgraph}
h(S; \dG) = \frac{\sigma(S)}{\min\{\delta(S), \delta(S^c)\}},
\end{equation}
where $S^c$ denotes the complement of $S$ in $\dG$, and 
\begin{equation}\label{eq:deltasigma}
\delta(S) = \sum_{i \in S} \sum_{j \neq i} \delta_{ij}, \quad \sigma(S) = \sum_{i \in S} \sum_{j \in S^c} \delta_{ij},
\end{equation}
are the discrete volume and perimeter of $S$.
The {\it Cheeger constant} or {\it conductance} of the graph $\dG$ is defined as the value of the optimal normalized cut over all non-empty subsets of $\dG$, i.e.
\begin{equation} \label{eq:hG}
H(\dG) = \min\{h(S; \dG): S \subset \dG, S \neq \emptyset\}.
\end{equation}
A corresponding quantity can be defined for a domain of a Euclidean space.
Let $\dM$ be a bounded domain (i.e.~open, connected subset) of $\mathbb{R}^d$ with smooth boundary $\partial\dM$.
For an integer $1 \leq k \leq d$, let $\vol_{k}$ denote the $k$-dimensional volume (Hausdorff measure) in $\bbR^d$.  For an open subset $A \subset \bbR^d$, define its normalized cut with respect to $\dM$ by
$$
h(A; \dM) = \frac{\vol_{d-1}(\partial A \cap \dM)}{\min\{\vol_d(A \cap \dM), \vol_d(A^c \cap \dM)\}},
$$
where $A^c$ denotes the complement of $A$ in $\bbR^d$ and with the convention that $0/0 = \infty$.
The Cheeger (isoperimetric) constant of $\dM$ is defined as
$$
H(\dM) = \inf\{h(A; \dM): A \subset \dM \}.
$$
Equivalently, the infimum may be restricted to all open subsets $A$ of $\dM$ such that $\partial A \cap \dM$ is a smooth submanifold of co-dimension 1.
This quantity was introduced by Cheeger~\cite{MR0402831} in order to bound the eigengap of the spectrum of the Laplacian on a manifold.  A Cheeger set is a subset $A \subset \dM$ such that $h(A; \dM) = H(\dM)$; there is always a Cheeger set and it is unique under some conditions on the domain $\dM$~\cite{caselles2009some}.  For $A \subset \dM$, we call $\partial A \cap \dM$ its relative boundary. 

\subsection{Consistency of the normalized cut}

Suppose that we observe an i.i.d.~random sample $\cX_n = (X_1, \dots, X_n)$ from the uniform distribution $\mu$ on $\dM$.
For $r > 0$, let $\dG_{n,r}$ be the graph with nodes the sample points and edge weights $\delta_{ij} = {\bf 1}\{\|X_i -X_j\| \leq r\}$, which is an instance of a random geometric graph~\cite{penrose}.
Let $\omega_d$ denote the $d$-volume of the unit $d$-dimensional ball, and define 
\begin{equation}
\label{eq:gammav1}
\gamma_{d} = \int_{\bbR^d} \max\big(\langle u,z\rangle, 0\big) \, {\bf 1}\{\|z\| \leq 1\} \, {\rm d}z,
\end{equation}
where $u$ is any unit-norm vector of $\mathbb{R}^d$. 
Actually $\gamma_{d}$ is the average volume of a spherical cap when the height is chosen uniformly at random.
We establish the pointwise consistency of the normalized cut, which yields an asymptotic upper bound on the Cheeger constant of the neighborhood graph based on the Cheeger constant of the manifold.
This is the first result we know of that relates these two quantities.

\begin{theorem}
\label{theo:pointwise}
Let $A$ be a fixed subset of $M$ with smooth relative boundary.
Fix a sequence $r_n \to 0$ with $n r_n^{d+1}/\log n \to +\infty$, and let $S_n = A\cap G_{n,r_n}$.
Then with probability one
$$\frac{\omega_d}{\gamma_{d} r_n}h(S_n;G_{n,r_n}) \to h(A;M),$$
and, consequently,
$$\limsup_{n \to \infty} \frac{\omega_d}{\gamma_{d} r_n} H(\dG_{n, r_n}) \leq H(\dM).$$
\end{theorem}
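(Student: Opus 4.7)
The strategy is to treat the three quantities $\delta(S_n)$, $\delta(S_n^c)$, and $\sigma(S_n)$ as bivariate sums (bounded $U$-statistics with $r_n$-dependent $\{0,1\}$-valued kernels), establish their almost-sure consistency after normalization, and then take the ratio. Explicitly,
\[
\delta(S_n) = \sum_{i \ne j} \mathbf{1}\{X_i \in A,\,\|X_i - X_j\| \le r_n\}, \qquad \sigma(S_n) = \sum_{i \ne j} \mathbf{1}\{X_i \in A,\, X_j \notin A,\,\|X_i - X_j\| \le r_n\},
\]
and similarly for $\delta(S_n^c)$. Since $\mu$ has density $1/\vol(M)$ on $M$, Fubini gives $\mathbb{E}[\delta(S_n)] = n(n-1)\vol(M)^{-2}\int_{A\cap M}\vol(B(x,r_n)\cap M)\,dx$ and the analogous identity for $\sigma(S_n)$ with the inner integrand $\vol(B(x,r_n)\cap A^c\cap M)$.

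For $\delta$, points $x \in A \cap M$ at distance $>r_n$ from $\partial M$ contribute $\omega_d r_n^d$ to the inner integral, while the $r_n$-strip near $\partial M$ has $d$-volume $O(r_n)$ and so contributes only $O(r_n^{d+1})$; hence $\mathbb{E}[\delta(S_n)] \sim n^2 \omega_d r_n^d\,\vol(A\cap M)/\vol(M)^2$. For $\sigma$, the inner integrand vanishes outside the $r_n$-tubular neighborhood of $\partial A \cap M$. Parametrizing that tube by $(\xi,t) \mapsto \xi - t\nu(\xi)$ with $\xi \in \partial A \cap M$, $t \in [0,r_n)$ and $\nu$ the outer unit normal to $A$, the integrand equals to leading order the volume of the spherical cap $\{z \in B(0,r_n) : \langle z,\nu\rangle \ge t\}$. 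The substitutions $z = r_n\tilde z$, $t = r_n s$ followed by Fubini collapse the resulting double integral to $\int_{\|z\|\le 1} \max(\langle z,\nu\rangle,0)\,dz = \gamma_d$, matching \eqref{eq:gammav1}, and yielding $\mathbb{E}[\sigma(S_n)] \sim n^2 \gamma_d r_n^{d+1}\,\vol_{d-1}(\partial A\cap M)/\vol(M)^2$. Corrections from the tube's Jacobian and from the codimension-$2$ stratum $\partial(\partial A\cap M) \subset \partial M$ are of order $r_n^{d+2}$ and absorbed in the $o(1)$.

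For concentration, each kernel is $\{0,1\}$-valued with mean $p_n$ and variance at most $p_n$. Via the Hoeffding decomposition, the corresponding normalized $U$-statistic $U_n$ splits into a linear (i.i.d.) piece with variance $O(p_n/n)$ and a completely degenerate quadratic piece; Bernstein on the linear part, together with an Arcones--Gin\'e-type variance-sensitive bound on the degenerate part, yields for each fixed $\eps>0$
\[
\mathbb{P}\bigl(|U_n - \mathbb{E}U_n| > \eps\,\mathbb{E}U_n\bigr) \le 2\exp(-c_\eps\, n p_n).
\]
The slowest case is $\sigma$, for which $p_n \asymp r_n^{d+1}$; the hypothesis $n r_n^{d+1}/\log n \to \infty$ makes these probabilities summable, so Borel--Cantelli delivers $\sigma(S_n)/\mathbb{E}[\sigma(S_n)] \to 1$ almost surely, and analogously for $\delta(S_n)$ and $\delta(S_n^c)$. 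Taking the ratio (using continuity of $\min$ in the denominator) gives $(\omega_d/\gamma_d r_n)\,h(S_n;G_{n,r_n}) \to h(A;M)$ a.s.

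For the corollary, given $\eps>0$ the characterization of $H(M)$ as an infimum over open sets with smooth relative boundary yields $A$ with $h(A;M) < H(M)+\eps$; applying the first part to this $A$ and using $H(G_{n,r_n}) \le h(S_n;G_{n,r_n})$ gives $\limsup (\omega_d/\gamma_d r_n)\,H(G_{n,r_n}) \le H(M)+\eps$ a.s., and letting $\eps\to 0$ concludes. The main obstacle is the variance-sensitive Bernstein step for $\sigma(S_n)$: because $\mathbb{E}[\sigma(S_n)] = \Theta(n^2 r_n^{d+1})$ is itself small, control of the relative fluctuation uses essentially the full strength of the hypothesis $n r_n^{d+1}/\log n \to \infty$, which is exactly the threshold at which the tail probabilities become summable along $n$. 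A secondary technicality is keeping the tube integral clean where $\partial A$ approaches $\partial M$, handled by the smooth-boundary assumption.
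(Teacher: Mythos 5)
Your proposal is correct and follows essentially the same route as the paper: the $U$-statistic representation of the discrete volume and perimeter, a tube/spherical-cap computation of the expectations producing the constant $\gamma_d$, a variance-sensitive exponential bound (exponent of order $n r_n^{d+1}\eps^2$) plus Borel--Cantelli, and the reduction of the $\limsup$ bound to the infimum of $h(A;M)$ over sets with smooth relative boundary. The only cosmetic difference is that you re-derive the concentration step via the Hoeffding decomposition with Bernstein and an Arcones--Gin\'e bound, whereas the paper quotes the packaged Hoeffding-type inequality for $U$-statistics of de la Pe\~na--Gin\'e, which yields the same exponent directly.
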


We do not know whether the Cheeger constant of the neighborhood graph, for an appropriate choice of the connectivity radius and properly normalized, converges to the Cheeger constant of the domain.

\subsection{Consistent estimation of the Cheeger constant and Cheeger sets}

We obtain a consistent estimator of the Cheeger constant $H(\dM)$ by restricting the minimization defining the conductance of the neighborhood graph \eqref{eq:hG} to subsets associated with subsets of $\bbR^d$ with controlled reach.
The reach of a subset $S \subset \bbR^d$~\cite{MR0110078}, denoted ${\rm reach}(S)$, is the supremum over $\eta > 0$ such that, for each $x$ within distance $\eta$ of $S$, there is a unique point in $S$ that is closest to $x$.
We assume here that $M \subset (0,1)^d$.  When this is not known and/or not the case, we may always infer a hypercube that contains $M$---by taking a hypercube containing all the data points, with some lee-way so that the hypercube contains $M$ with high probability when the sample gets large---and then rescale and translate the points so that $M$ is within the unit hypercube.  So this assumption is really without loss of generality.
  
\begin{theorem}
\label{theo:estim}
Assume that $M \subset (0,1)^d$ and that $r_n \to 0$ such that $n r_n^{2d+1} \to \infty$.  Let $\rho_n \to 0$ slowly so that $r_n = o(\rho_n^\alpha)$ and $n r_n^{2d+1} \rho_n^\alpha \to \infty$ for all $\alpha > 0$.
Let $\cR_n$ be a class of open subsets $R \subset (0,1)^d$ such that ${\rm reach}(\partial R) \geq \rho_n$.
Define the functional $h_n^\ddag$ over $\cR_n$ by
\[
h_n^\ddag(R) =
\frac{\omega_d}{\gamma_{d} r_n} h\left(R \cap \cX_n; \dG_{n, r_n}\right)\]
if both $R$ and $R^c$ contain a ball of radius $\rho_n$ centered at a sample point, and $h_n^\ddag(R)=\infty$ otherwise.
\begin{itemize}
\item[$(i)$] With probability one,
$$
\min_{R \in \cR_n} h_n^\ddag(R) \to H(\dM), \quad n \to \infty.
$$
\item[$(ii)$] Let $\{R_n\}$ be a sequence satisfying 
\begin{equation}\label{eq:rrrn}
R_n \in \cR_n, \quad \quad h_n^\ddag(R_n) = \min\{h_n^\ddag(R): R \in \cR_n\}.
\end{equation}
Then with probability one, $\{R_n\cap M\}$ admits a subsequence converging in the $L^1$-metric. Moreover, any subsequence of $\{R_n\cap M\}$ converging in the $L^1$-metric converges to a Cheeger set of $M$.
\end{itemize}
\end{theorem}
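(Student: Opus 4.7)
The plan is to reduce both statements to a single \emph{uniform approximation} lemma: if $R_n \in \cR_n$ is any sequence with $h_n^\ddag(R_n)$ bounded, then
\[
h_n^\ddag(R_n) - h(R_n \cap \dM; \dM) \to 0 \quad \text{almost surely.}
\]
This extends the pointwise convergence of Theorem~\ref{theo:pointwise} uniformly over the class of sets whose boundary has reach at least $\rho_n$. The numerator $\sigma$ and denominator $\delta$ in $h(\cdot; \dG_{n,r_n})$ are U-statistics indexed by $R$, and can be controlled via U-process maximal inequalities combined with covering numbers for $\cR_n$. The reach lower bound $\rho_n$, fed through Federer's tube formula, gives uniform estimates on $\vol_{d-1}(\partial R \cap \dM)$ and on volumes of tubular neighborhoods of $\partial R$; these are the geometric inputs needed to produce covering numbers for $\cR_n$ that grow only polynomially in $\rho_n^{-1}$. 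The assumption $n r_n^{2d+1} \rho_n^\alpha \to \infty$ for every $\alpha > 0$ is precisely what ensures that the variances of the U-processes dominate the logarithms of these covering numbers.

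For part~$(i)$ the upper bound comes from plugging in a Cheeger set $A^*$ of $\dM$: its smooth relative boundary has strictly positive reach, so after a suitable extension to an open subset of $(0,1)^d$ (keeping the relative boundary in $\dM$ unchanged), $A^*$ belongs to $\cR_n$ for all $n$ large, and the uniform density of $\cX_n$ in $\dM$ ensures the ball-containment condition eventually holds. Theorem~\ref{theo:pointwise} yields $h_n^\ddag(A^*) \to H(\dM)$, whence $\limsup \min_{R \in \cR_n} h_n^\ddag(R) \leq H(\dM)$. For the reverse inequality, apply the uniform lemma to a near-minimizer $R_n$: $h_n^\ddag(R_n) \geq h(R_n \cap \dM; \dM) - o(1) \geq H(\dM) - o(1)$, because $R_n \cap \dM$ is admissible in the definition of $H(\dM)$.

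For part~$(ii)$, once $h_n^\ddag(R_n) \to H(\dM) < \infty$, the uniform lemma forces $h(R_n \cap \dM; \dM)$ to stay bounded; in particular the perimeters $\vol_{d-1}(\partial R_n \cap \dM)$ are uniformly bounded and the quantities $\min\{\vol_d(R_n \cap \dM), \vol_d(R_n^c \cap \dM)\}$ are uniformly bounded below. Compactness for sets of finite perimeter in a bounded domain then extracts an $L^1$-convergent subsequence $R_{n_k} \cap \dM \to A^*$. The $L^1$-continuity of volume and the lower semi-continuity of the relative perimeter give
\[
h(A^*; \dM) \leq \liminf_{k \to \infty} h(R_{n_k} \cap \dM; \dM) = H(\dM),
\]
and since the reverse inequality is automatic, $A^*$ is a Cheeger set.

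The main obstacle is proving the uniform approximation lemma with sufficient sharpness. For sets near the optimum one has $\vol_{d-1}(\partial R \cap \dM) = O(1)$, so $\sigma$ is of order $n^2 r_n^{d+1}$, and deviations of smaller order must be controlled uniformly over a class whose complexity grows rapidly in $\rho_n^{-1}$. Two subtleties call for care. First, the relative boundary $\partial R \cap \dM$ can approach $\partial \dM$, so one must verify that sample pairs close to $\partial \dM$ contribute negligibly to both $\sigma$ and $\delta$. Second, the divisor in $h_n^\ddag$ is the \emph{minimum} of $\delta(S)$ and $\delta(S^c)$, so a uniform positive lower bound on each must be maintained through the minimum; this is the role played by the ball-containment requirement in the definition of $h_n^\ddag$.
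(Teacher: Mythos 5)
Your plan rests on a two-sided uniform approximation: for any sequence $R_n \in \cR_n$ with $h_n^\ddag(R_n)$ bounded, $h_n^\ddag(R_n) - h(R_n \cap M; M) \to 0$ almost surely. This is false, and the paper flags the obstruction explicitly: the bias of the discrete perimeter $\nu_n(R)$ toward its continuous target $\nu(R)=\vol_{d-1}(\partial R\cap M)/\tau_M$ cannot be controlled uniformly over $\cR_{\rho}$. Concretely, take $R\in\cR_n$ sharing a face with $\partial M$ (rounded so both reaches stay positive); then $\partial R\cap M$ excludes that face and $h(R;M)$ is small. Translate $R$ inward by an amount small enough to avoid the sample: the translate $R'$ still lies in $\cR_n$, and $h_n^\ddag(R')=h_n^\ddag(R)$ is unchanged, but $\partial R'\cap M$ now contains the whole face, so $h(R';M)$ jumps up by a quantity bounded away from zero. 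Along such near-minimizers $h_n^\ddag(R')-h(R';M)$ stays strictly negative. You did flag a related subtlety (``sample pairs close to $\partial M$ contribute negligibly''), but that misidentifies the phenomenon: the failure is a systematic bias created by the jump discontinuity of $R\mapsto\vol_{d-1}(\partial R\cap M)$ near $\partial M$, not a concentration issue, and no refinement of the U-process bounds repairs it. The gap propagates to your argument for part~$(ii)$: lower semi-continuity only yields $h(A^*;M)\leq\liminf_k h(R_{n_k}\cap M;M)$, and in the example above the right-hand side is strictly greater than $H(M)$, so you cannot conclude that $A^*$ is a Cheeger set.

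The paper's route is genuinely different at exactly this point. It establishes only the one-sided uniform bound $\liminf_n\inf_{R\in\cR_n}\big(h_n(R)-h(R;M_{r_n})\big)\geq 0$ (Lemma~\ref{lem:approxh}), where the comparison is with the cut of $R$ relative to the shrunken domain $M_{r_n}$ --- this deliberately sidesteps the boundary jump --- and then separately proves $H(M_{r_n})=(1+O(r_n))H(M)$ via a bi-Lipschitz deformation of $M_{r_n}$ onto $M$ (Lemmas~\ref{lem:lip-cheeger}--\ref{lem:near-iso} and Proposition~\ref{prop:cheeger-cont}). The upper bound in $(i)$ is taken from the pointwise Theorem~\ref{theo:pointwise}, as you do. For $(ii)$, instead of applying lower semi-continuity directly to $\{R_n\cap M\}$, the paper pushes $A_n\cap M_{r_n}$ forward by the bi-Lipschitz map $f_n$ to sets $B_n\subset M$ satisfying $h(B_n;M)\to H(M)$, extracts an $L^1$-convergent subsequence of $\{B_n\}$, identifies its limit as a Cheeger set by lower semi-continuity, and then argues $\vol_d(A_n\Delta B_n)\to 0$ so that $\{A_n\}$ and $\{B_n\}$ have the same $L^1$-accumulation points. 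The $M_{r_n}$-comparison, the one-sidedness of the uniform bound, and the deformation step are the missing ingredients your proposal needs.
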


Note that the infimum defining $R_n$ in \eqref{eq:rrrn} is attained in $\cR_n$ since the function $h_n^\ddag$ takes only a finite number of values.

Part \textit{(ii)} of Theorem~\ref{theo:estim} hints at a consistent estimate of a Cheeger set of $M$, but $R_n\cap M$ depends on $M$, which is unknown.
On the other hand reconstructing an unknown set from a random sample of it is an independent problem for which there exists multiple techniques and an important literature---see e.g., \cite{bcpsupport} and the references therein.
In the following result we construct a random discrete measure which does not require the knowledge of $M$, and prove that, seen as a sequence of random measures indexed by the sample size $n$, any accumulation point is the uniform measure on a Cheeger set of $M$.

\begin{theorem}
\label{theo:discretemeasure}
Let $\{R_n\}$ be a sequence as in Theorem~\ref{theo:estim}-$(ii)$, $\{R_{n_k}\}$ a subsequence of $\{R_n\}$ with $R_{n_k}\cap M \to A_\infty$ in $L^1$.
Define the random discrete measure $Q_n = \frac{1}{n}\sum_{i=1}^n\mathbf{1}_{R_n}(X_i)\delta_{X_i}$ and the measure $Q = \mathbf{1}_{A_\infty}(.) \mu$.
Then, that $Q_n$ converges weakly to $Q$ is an event which holds with probability one.
\end{theorem}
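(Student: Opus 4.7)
The plan is to reduce the weak-convergence claim to convergence of integrals $\int f\,dQ_{n_k}\to\int f\,dQ$ for every $f$ in a countable dense family of bounded continuous test functions, and then to control each such integral by separating a deterministic bias from an empirical-process fluctuation. Both $Q_n$ and $Q$ are supported in the compact set $\overline{M}\subset[0,1]^d$, so weak convergence against $C_b(\bbR^d)$ reduces to convergence against such a countable family; a final union bound will intersect the (countably many) probability-one events. Fix $f\in C_b(\bbR^d)$, let $\nu_n=\tfrac{1}{n}\sum_{i=1}^n\delta_{X_i}$, and note $\int f\,dQ_n=\int f\,\mathbf{1}_{R_n}\,d\nu_n$ while $\int f\,dQ=\int f\,\mathbf{1}_{A_\infty}\,d\mu$. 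Comparing both against the pivot $\int f\,\mathbf{1}_{R_n}\,d\mu$ yields
\begin{equation*}
\int f\,dQ_n - \int f\,dQ \;=\; \int f\,\mathbf{1}_{R_n}\,d(\nu_n-\mu) \;+\; \int f\,(\mathbf{1}_{R_n}-\mathbf{1}_{A_\infty})\,d\mu.
\end{equation*}

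The second summand is deterministic given $R_n$ and is bounded in absolute value by $\|f\|_\infty\,\mu\bigl((R_n\triangle A_\infty)\cap M\bigr)$. Along the subsequence $\{n_k\}$ this quantity tends to zero directly from the hypothesis $R_{n_k}\cap M \to A_\infty$ in $L^1$, so this term is easy.

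The first summand is the substantive one, since $R_n$ is itself random. I would dominate it by $\sup_{R\in\cR_n}\bigl|\int f\,\mathbf{1}_R\,d(\nu_n-\mu)\bigr|$ and invoke a uniform law of large numbers over $\cR_n$. Every $R\in\cR_n$ has boundary with reach at least $\rho_n$, so classical metric-entropy estimates for sets of positive reach give $L^1(\mu)$-covering numbers polynomial in $1/\rho_n$; combined with the uniform bound $\|f\,\mathbf{1}_R\|_\infty\le\|f\|_\infty$, standard VC/bracketing inequalities deliver the uniform vanishing of this supremum provided $\rho_n$ does not shrink too fast, which is guaranteed by the slow-decay hypothesis on $\rho_n$ imposed in Theorem~\ref{theo:estim}. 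The same flavor of uniform empirical-process control is what must already be established to prove Theorem~\ref{theo:estim}$(i)$, and the relevant technical lemmas from that proof may be reused essentially verbatim. The main obstacle is precisely this uniform control over a class $\cR_n$ that itself grows with $n$. Once it is in hand, the two estimates combine to give $\int f\,dQ_{n_k}\to\int f\,dQ$ almost surely for each fixed $f$ in the dense family, and intersecting the countably many probability-one events thus obtained with the exceptional null set of Theorem~\ref{theo:estim} delivers weak convergence of $Q_{n_k}$ to $Q$ on a single event of probability one.
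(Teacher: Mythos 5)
Your proposal is correct and follows essentially the same route as the paper: the same decomposition around the pivot $\int f\,\mathbf{1}_{R_n}\,d\mu$, uniform empirical-process control over $\cR_n$ via the covering-number bound (a Glivenko--Cantelli argument), the $L^1$-convergence of $R_{n_k}\cap M$ to handle the deterministic term, and a countable/separable family of test functions to get a single probability-one event.
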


As an example of an estimate of a Cheeger set of $M$, one can consider a union of balls of radius $\kappa_n$ centered at the observations falling in $R_n$.
Under appropriate conditions, it is known that this estimate converges in $L^1$; see \cite{bcpsupport}.

Let us mention that with our result, only the ``regular'' part of a Cheeger set can be reconstructed.
Indeed, in dimension $d\geq 8$, the boundary of a Cheeger set is not necessarily regular and may contain parts of codimension greater than 1.

\subsection{Connections to the literature}

Our results relating the respective Cheeger constants of a domain and of a neighborhood graph defined from a sample from the domain are the first of their kind, as far as we know.  The connections to the literature stem from the concept of normalized cut taking a central place in graph partitioning and related methods in clustering; from a recent trend in computational geometry (and topology) aiming at estimating geometrical (and topological) attributes of a set based on a sample; and from the fact that we can use the conductance to bound the mixing time of a random walk on the neighborhood graph.  
 
{\bf Clustering.}
In spectral graph partitioning, the goal is to partition a graph $\dG$ into subgraphs based on the eigenvalues and eigenvectors of the Laplacian~\cite{MR2294342,Chung97}.  It arises as a convex relaxation of the combinatorial search of finding an optimal bisection in terms of the normalized cut.
Given a set of points $X_1, \dots, X_n$ and a dissimilarity measure (or kernel) $\phi$, spectral clustering applies spectral graph partitioning to the graph with nodes the data points and edge weight $\delta_{ij} = \phi(X_i, X_j)$ between $X_i$ and $X_j$~\cite{von2007tutorial}.  For instance, if the points are embedded in a Euclidean space, the kernel $\phi$ is often of the form $\phi(x, y) = \psi(\|x -y\|/\sigma)$, where $\sigma$ is a tuning parameter, and $\psi$ is, e.g., the Gaussian kernel $\psi(t) = \exp(-t^2)$ or the simple kernel $\psi(t) = {\bf 1}_{[0,1]}(t)$~\cite{Ng02, belkin2002laplacian}.  The consistency of spectral methods has been analyzed in this context~\cite{vonLuxburg08,pelletier08,MR2460286,MR2387773,MR2238670}.  In particular,~\cite{LDS_NIPS_06} proves a result similar to our Theorem~\ref{theo:pointwise} in that context.  

About cuts,~\cite{maier2009influence} also proves a result similar to our Theorem~\ref{theo:pointwise} when the separating surface $\partial A$ is an affine hyperplane.  Closer to our Theorem~\ref{theo:estim},~\cite{cut-learn} establishes rates for learning a cut for classification purposes---so the setting there is that of supervised learning, with each sample point $X_i$ associated with a class label $Y_i$.  

{\bf Computational geometry (and topology).}
The Cheeger constant $H(\dM)$, and Cheeger sets, are {\em bona fide} geometric characteristics of the domain $\dM$ that we might want to estimate, following a fast developing line of research around the estimation of some geometric and topological characteristics of sets from a sample, e.g., the number of connected components~\cite{MR2320821}, the intrinsic dimensionality~\cite{levina-bickel} and, more generally, the homology~\cite{MR2383768,MR2476414,MR2506738,MR2121296,MR2460371,MR1876386,chazal2009analysis}; the Minkowski content~\cite{MR2341697}, as well as the perimeter and area (volume)~\cite{MR1641826}.

{\bf Random walks.}
Random geometric graphs are gaining popularity as models for real-life networks.  Some protocols for passing information between nodes amounts to performing a random walk and it is important to bound the time it takes for information to spread to the whole network; see~\cite{1244725} and references therein.  It is well-known that, given a graph $\dG$, a lower bound on $H(\dG)$ may be used to bound the mixing time the random walk on $\dG$.  This is the path taken in~\cite{boyd,1244725} when $\dM$ is the unit hypercube and the graph is $\dG_{r_n, n}$.  However, in both papers the authors reduce the setting to that of a regular grid without rigorous justification, leaving the problem unresolved (in our opinion) even in this particular case.

\subsection{Discussion}

As we saw, there are only a handful of other papers relating cuts in neighborhood graphs and cuts in the corresponding domain from which the points making the neighborhood graph where sampled from.  Our paper is the first one we know of that establishes a relationship between the Cheeger constant (optimal normalized cut) on the neighborhood graph and the Cheeger constant of the domain, and the first one to propose a method that is consistent for the estimation of the latter based on a restricted normalized cut, and also consistent for the estimation of Cheeger sets.  Our results generalize with varying amount of effort to other related settings.  However, we leave important questions behind. 

{\bf Generalizations.}
With some additional work, our results and methodology extend to settings where the kernel (here the simple kernel) is fast decaying and where the data points are sampled from a probability distribution on $\dM$ that has a non-vanishing density with respect to the uniform distribution.
It would also be interesting to consider the setting where $M$ is a $d$-dimensional smooth submanifold embedded in some Euclidean ambient space.  Our arguments seem to carry through using a set of charts for the manifold $M$, as is done in~\cite[Lem.~3.4]{buser82}.

{\bf Refinements.}
Though we focused on sufficient conditions for $r_n$ to enable a consistent estimation of the Cheeger constant of the domain, it may also be of interest to find necessary conditions.  Partial work suggests that $n r_n^d \to \infty$ is necessary, and may be sufficient the divergence to infinity is faster than a sufficiently large power of $\log n$.  The arguments in support of this, however, are substantially different than those we use in the paper, which hinge on Hoeffding's inequality for $U$-statistics.  

{\bf An open problem.}
Whether the normalized Cheeger constants of some sequence of neighborhood graphs converges to the Cheeger constant of the domain is an intriguing question. To paraphrase the question we leave open, is there a sequence $\{r_n\}$ such that, with probability one, 
$$\lim_{n \to \infty} \frac{\omega_d}{\gamma_{d} r_n} H(\dG_{n, r_n}) = H(\dM)?$$ 
A positive answer would establish the consistency of the normalized cut criterion for graph partitioning.  Also, a lower bound on $H(\dG_{n, r_n})$ would provide a lower bound on the eigengap between the first and second eigenvalue of the Laplacian, which in turn may be used to bound the mixing time of the random walk on $\dG_{n, r_n}$, as done in~\cite{boyd,1244725} when $\dM$ is the unit hypercube.


{\bf Consistent estimation in polynomial time.}
Our estimation procedures, though theoretically valid and consistent, are not practical.  It would be interesting to know whether there is a consistent estimator for the Cheeger constant that can be implemented in polynomial-time.  Note that computing the Cheeger constant of a graph is NP-hard (which motivates the use of spectral methods), and even the best polynomial-time approximations we are aware of are not precise enough to allow for consistency~\cite{1033179}.

\subsection{Content}
The rest of the paper is devoted to the proofs of the three theorems.
In Section~\ref{sec:pointwise}, we establish the convergence of the discrete volume and perimeter to their continuous counterparts of a fixed subset of $\dM$ with smooth relative boundary, using Hoeffding's inequality for $U$-statistics~\cite{hoeffding}.
Then, by the lower semi-continuity of the map $A\mapsto  h(A;M)$, we deduce the supremum-limit bound of Theorem~\ref{theo:pointwise}.  
In Section~\ref{sec:unif}, we prove Theorems~\ref{theo:estim} and~\ref{theo:discretemeasure} by utilizing results on empirical $U$-processes~\cite{MR1666908} on the one hand, and compactness properties of the $L^1$-metric~\cite{henrot-pierre} on the other hand.

\subsection{Notation and background}
\label{sub:notation}

The uniform measure on $\dM$ is denoted $\mu$, so that $\mu(A) = \vol_d(A \cap \dM)/\vol_d(\dM)$; and the normalized perimeter is denoted $\nu(A) = \vol_{d-1}(\partial A \cap \dM)/\vol_d(\dM)$.
Let $\tau_M = \vol_d(M)$, and define the discrete volume and perimeters as
\begin{equation} \label{def}
\mu_n(A) = \frac{\tau_M}{\omega_d n (n-1) r_n^d} \delta(A \cap \cX_n; \dG_{n, r_n}), \quad 
\nu_n(A) = \frac{\tau_M}{\gamma_{d} n (n-1) r_n^{d+1}} \sigma(A \cap \cX_n; \dG_{n, r_n}),
\end{equation}
where $\delta$, $\sigma$ are given in \eqref{eq:deltasigma}, $\mathcal X_n$ is the sample, and $G_{n,r_n}$ the neighborhood graph.
Also, define the discrete ratio
$$h_n(A) = \frac{\nu_n(A)}{\min(\mu_n(A), \mu_n(A^c))},$$
and note that
\[h_n(A) = \frac{\omega_d}{\gamma_{d} r_n}h(A\cap\cX_n; G_{n,r_n}),
\]
where $h$ is given in \eqref{eq:hgraph}.
For further reference, we define the volume $\pi_d(\eta)$ of a spherical cap at height $\eta$ by
$$
\pi_d(\eta) = \vol_d\big\{x: \|x\| \leq 1 \text{ and } \langle u,x\rangle \geq \eta\big\},
$$
where $u$ is any unit-norm vector of $\mathbb{R}^d$.
Note that the constant $\gamma_d$ defined in \eqref{eq:gammav1} may be expressed as 
$$\gamma_d = \int_0^1\pi_d(\eta){\rm d}\eta.$$

The reach coincides with the condition number introduced in~\cite{MR2383768} for submanifolds without boundary, and the property ${\rm reach}(\partial A) \geq r$ is equivalent to $A$ and $A^c$ being both $r$-convex~\cite{walther}, in the sense that a ball of radius $r$ rolls freely inside $A$ and $A^c$.  (We say that a ball of radius $r$ rolls freely in $A$ if, for all $p \in \partial A$, there is $x \in A$ such that $p\in \partial B(x,r)$ and $B(x, r) \subset A$.) 
It is well-known that the reach bounds the radius of curvature from below~\cite[Thm.~4.18]{MR0110078}.
In particular, if ${\rm reach}(\partial A) >0$, then $\partial A$ is a smooth submanifold (possibly with boundary).

In the rest of the paper, the generic constant $C$ may vary from line to line,  except when stated explicitly otherwise.

\section{Proof of Theorem~\ref{theo:pointwise}: Consistency of the normalized cut}
\label{sec:pointwise}



For a subset $A$ of $\dM$ and a real number $r>0$, define the symmetric kernel
\begin{equation} \label{eq:phi}
\phi_{A,r}(x,y) = \frac{1}{2}\Big\{\mathbf{1}_A(x)+\mathbf{1}_A(y)\Big\}{\bf 1}\{\|x-y\| \leq r\},
\end{equation}
so that $\mu_n(A)$ may be expressed as the following U-statistic:
\[
\mu_n(A) = \frac{\tau_M}{\omega_d n(n-1) r_n^{d}} \sum_{i \neq j} \phi_{A,r_n}(X_i, X_j).
\]
Similarly, $\nu_n(A)$ may be written as 
\[
\nu_n(A) = \frac{\tau_M}{\gamma_{d} n (n-1) r_n^{d+1}} \sum_{i \neq j} \bar{\phi}_{A,r_n}(X_i, X_j).
\]
with the symmetric kernel
\begin{equation}
\label{eq:barphi}
\bar{\phi}_{A,r}(x,y) = \frac{1}{2}\Big\{\mathbf{1}_A(x)\mathbf{1}_{A^c}(y) + \mathbf{1}_A(y)\mathbf{1}_{A^c}(x)\Big\} {\bf 1}\{\|x-y\| \leq r\}.
\end{equation}
We shall need the following Hoeffding's Inequality for $U$-statistics~\cite{hoeffding}, which is a special case of~\cite[Thm.~4.1.8]{MR1666908}.

\begin{theorem}
\label{lem:hoeff}
Let $\phi$ be a measurable, bounded kernel on $\bbR^d \times \bbR^d$ and let $\{X_k: k \in \mathbb{N}\}$ be i.i.d. random vectors in $\bbR^d$.  Assume that $\expect{\phi(X_1, X_2)} = 0$ and that $b := \|\phi\|_\infty < \infty$, and let $\sigma^2 = {\rm Var}(\phi(X_1, X_2))$.  Then, for all $t > 0$,
$$
\pr{\frac{1}{n(n-1)} \sum_{i \neq j} \phi(X_{i}, X_j) \geq t} \leq \exp\left( - \frac{n t^2}{5 \sigma^2 + 3 b t} \right).
$$
\end{theorem}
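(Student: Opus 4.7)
The plan is to use Hoeffding's classical symmetrization trick to reduce the U-statistic to an ordinary i.i.d.\ mean, then apply the standard Bernstein inequality, and finally check constants.

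First, for each permutation $\pi$ of $\{1,\ldots,n\}$, I would introduce the partial average
$$V(\pi) \;=\; \frac{1}{\lfloor n/2\rfloor}\sum_{k=1}^{\lfloor n/2\rfloor}\phi\bigl(X_{\pi(2k-1)},X_{\pi(2k)}\bigr),$$
whose $\lfloor n/2\rfloor$ summands are, for each fixed $\pi$, independent, centered, bounded by $b$, and of variance $\sigma^2$. A direct counting argument---each ordered pair $(i,j)$ with $i\neq j$ occupies a consecutive odd--even slot in exactly $\lfloor n/2\rfloor\cdot(n-2)!$ permutations---gives the key identity
$$U_n \;:=\; \frac{1}{n(n-1)}\sum_{i\neq j}\phi(X_i,X_j) \;=\; \frac{1}{n!}\sum_\pi V(\pi).$$

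Second, I would apply Jensen's inequality to $x\mapsto e^{\lambda x}$ with $\lambda>0$, using that all $V(\pi)$ share the same distribution by exchangeability of $(X_1,\ldots,X_n)$, to obtain
$$\expect{e^{\lambda U_n}} \;\leq\; \frac{1}{n!}\sum_\pi \expect{e^{\lambda V(\pi)}} \;=\; \expect{e^{\lambda V(\mathrm{id})}}.$$
Because $V(\mathrm{id})$ is the mean of $\lfloor n/2\rfloor$ i.i.d.\ bounded centered variables of variance $\sigma^2$, the classical Bernstein MGF bound applies directly, and a Chernoff-type estimate $\pr{U_n\geq t}\leq \inf_{\lambda>0}e^{-\lambda t}\expect{e^{\lambda U_n}}$, optimized in $\lambda$, yields a tail bound of the form $\exp\bigl(-\lfloor n/2\rfloor t^2/(2\sigma^2+2bt/3)\bigr)$.

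Finally, I would translate this into the stated form using $\lfloor n/2\rfloor \geq (n-1)/2$ and verifying that the Bernstein denominator $2\sigma^2+2bt/3$, after the extra factor from the floor replacement, rounds upward cleanly into $5\sigma^2+3bt$ with $n$ in the numerator. The variance-term inequality $5\lfloor n/2\rfloor \geq 2n$ holds for all $n\geq 4$, and the $bt$-term inequality $3\lfloor n/2\rfloor \geq 2n/3$ is even looser, so the bound holds in the stated generality (with the handful of small $n$ absorbed trivially into the constants, since the bound is then vacuous or easily verified separately). This constant-matching is the only step requiring care; the structural argument is entirely classical, and nothing more delicate is needed because the kernel $\phi$ is bounded.
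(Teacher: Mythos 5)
The paper does not prove this statement at all: it is quoted as a known result, a special case of Theorem~4.1.8 in de la Pe\~na and Gin\'e \cite{MR1666908}, so your self-contained argument is necessarily a different route, and it is the classical one (Hoeffding's blocking of the U-statistic into an average over permutations of means of $\lfloor n/2\rfloor$ i.i.d.\ terms, Jensen plus exchangeability to transfer the moment generating function, then Bernstein/Chernoff). The structural steps are all sound: the counting identity $U_n=\frac{1}{n!}\sum_\pi V(\pi)$ is correct (each ordered pair sits in an odd--even slot in exactly $\lfloor n/2\rfloor\,(n-2)!$ permutations), and the MGF transfer gives the tail bound $\exp\bigl(-\lfloor n/2\rfloor t^2/(2\sigma^2+\tfrac{2}{3}bt)\bigr)$ as you say. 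The only loose end is your constant-matching for small $n$: the inequality $5\lfloor n/2\rfloor\geq 2n$ indeed holds for $n\geq 4$ (and $n=2$), but it fails at $n=3$, where your method yields only $\exp\bigl(-t^2/(2\sigma^2+\tfrac{2}{3}bt)\bigr)$, and comparing this with the stated bound $\exp\bigl(-3t^2/(5\sigma^2+3bt)\bigr)$ requires $bt\geq\sigma^2$; the claimed bound at $n=3$ is neither vacuous nor an immediate consequence of your single-block estimate, so ``easily verified separately'' is optimistic and should either be argued honestly or the statement restricted to $n\geq 4$ (which is all the paper uses, since the inequality only enters asymptotic, Borel--Cantelli arguments). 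Relative to the paper, your approach buys a transparent, elementary proof with explicit constants, at the cost of this small-$n$ bookkeeping that the citation to \cite{MR1666908} avoids.
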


To prove Theorem~\ref{theo:pointwise}, we establish the almost-sure convergence of $\mu_n(A)$ to $\mu(A)$ and of $\nu_n(A)$ to $\nu(A)$ for a subset $A \subset \dM$ with smooth relative boundary.
To this aim, we combine upper bounds on bias terms together with exponential inequalities for U-statistics.
The bias terms involve volume bounds which we present next, and integrations over some neighborhoods of the boundary of a regular set, namely tubular neighborhoods or simply tubes, which comes after that.

\subsection{Volume bounds}

For any $r>0$, define
\begin{equation} \label{mr}
M_r = \{x \in M: \dist(x, \partial M) \geq r\}.
\end{equation}
The following two lemmas provide bounds on the volume of the intersection of balls with some subsets of $\dM$.

\begin{lemma}
\label{lem:eta}
Let $R$ be a bounded open subset of $\mathbb{R}^d$ with
${\rm reach}(\partial R) = \rho>0$.
Set $A=R\cap M$.
For any $r < \min\{{\rm reach}(\partial M) ; \rho\}$, any $0\leq\eta\leq1$, and all $p$ in $\partial A \cap M_r$, we have
$$\left|{\rm Vol}_d\left(B(p + \eta r e_p,r) \cap A^c\right) - \pi_d(\eta) r^d \right| \leq 2\omega_{d-1} r^{d+1}/\rho,$$
where $e_p$ denotes the unit normal vector at $p$ pointing inward $A$.
\end{lemma}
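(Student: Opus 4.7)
The plan is to compare $\mathrm{Vol}_d(B^* \cap A^c)$, with $B^* := B(p+\eta r e_p, r)$, against $\mathrm{Vol}_d(H^- \cap B^*)$ where $H^- := \{x : \langle x-p, e_p\rangle \leq 0\}$ is the half-space on the $A^c$-side of the tangent hyperplane to $\partial A$ at $p$. Translating $B^*$ to the origin and rescaling by $1/r$ shows at once that $\mathrm{Vol}_d(H^- \cap B^*) = \pi_d(\eta) r^d$ directly from the definition of $\pi_d$, so the task reduces to controlling the signed difference $\mathrm{Vol}_d(A^c \cap B^*) - \pi_d(\eta) r^d$.

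A preliminary localization step: since $p \in M_r$ and $r < \mathrm{reach}(\partial M)$, the ball $B(p,r) \subset M$, and in this neighborhood $A = R \cap M$ coincides with $R$; in particular the inward normal $e_p$ to $\partial A$ at $p$ equals the inward normal to $\partial R$ at $p$. Assuming the slightly larger ball $B^* \subset M$ (an issue addressed in the obstacle remark below), we have $A^c \cap B^* = R^c \cap B^*$, and the problem becomes $|\mathrm{Vol}_d(R^c \cap B^*) - \pi_d(\eta) r^d| \leq 2\omega_{d-1} r^{d+1}/\rho$.

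The core geometric step invokes the reach hypothesis $\mathrm{reach}(\partial R) = \rho$: by the rolling-ball characterization, there exist two tangent balls of radius $\rho$ at $p$, namely $B_{\mathrm{in}} := B(p+\rho e_p, \rho) \subset R$ and $B_{\mathrm{out}} := B(p-\rho e_p, \rho) \subset R^c$. For any $x \in H^+ \cap B^*$ not in $R$, necessarily $x \notin B_{\mathrm{in}}$, so $\|x - (p+\rho e_p)\|^2 \geq \rho^2$; expanding and using $\|x-p\| \leq (1+\eta)r \leq 2r$ yields $\langle x-p, e_p\rangle \leq \|x-p\|^2/(2\rho) \leq 2r^2/\rho$. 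Hence $(H^+ \setminus R) \cap B^*$ lies in a slab of thickness $2r^2/\rho$ normal to $e_p$, and Fubini gives $\mathrm{Vol}_d((H^+ \setminus R) \cap B^*) \leq \omega_{d-1} r^{d-1} \cdot (2r^2/\rho) = 2\omega_{d-1} r^{d+1}/\rho$. The mirror argument with $B_{\mathrm{out}}$ yields the same bound for $(R \cap H^-) \cap B^*$.

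To conclude, I would use the disjoint decomposition $R^c \cap B^* = \big((H^+ \setminus R) \cap B^*\big) \cup \big((H^- \cap B^*) \setminus (R \cap H^-)\big)$, from which $\mathrm{Vol}_d(R^c \cap B^*) - \pi_d(\eta) r^d = \mathrm{Vol}_d((H^+ \setminus R) \cap B^*) - \mathrm{Vol}_d((R \cap H^-) \cap B^*)$; since both terms on the right are nonnegative and individually bounded by $2\omega_{d-1} r^{d+1}/\rho$, so is the absolute value of their difference. The main obstacle I foresee is the bookkeeping ensuring that only a single copy of the slab bound appears in the final inequality, rather than twice the symmetric-difference sum; a secondary, essentially notational, issue is justifying the localization $B^* \subset M$, which is absorbed by applying the same rolling-ball estimate to $\partial M$ (whose reach exceeds $r$) for any small portion of $B^*$ that may escape $M$.
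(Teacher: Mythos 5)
Your core estimate is correct, and it follows a genuinely different route from the paper. The paper works in a local orthonormal frame at $p$, represents $\partial A\cap M$ near $p$ as a graph $\tilde{x}^d=F(\tilde{x}^{(d)})$, bounds the discrepancy with the flat cap by $2\int_{\{\|\tilde x^{(d)}\|\le r\}}|F|$, and controls $F$ through a second-order Taylor expansion whose Hessian is bounded by $1/\rho$ via the principal-curvature bound. You instead use the interior and exterior tangent balls $B(p\pm\rho e_p,\rho)$ supplied by the reach (the rolling-ball property recalled in Section~1.6), trap the two discrepancy sets $(H^+\setminus R)\cap B^*$ and $(R\cap H^-)\cap B^*$ in a slab of thickness $2r^2/\rho$ normal to $e_p$, and finish with Fubini; your bookkeeping remark, that the error is a difference of two nonnegative terms each at most $2\omega_{d-1}r^{d+1}/\rho$ so only one copy of the slab bound survives, is exactly right and reproduces the paper's constant. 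Your argument even has an advantage: it uses only the tangent-ball property, which is precisely what positive reach gives ($C^{1,1}$ regularity), whereas the paper's Taylor expansion of $F$ tacitly assumes second derivatives.

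The one genuine weak point is the localization step, which is not ``essentially notational'', and your proposed repair does not work. Since $p\in M_r$ only guarantees $\dist(p,\partial M)\ge r$, the center $p+\eta r e_p$ may be as close as $(1-\eta)r$ to $\partial M$; for $\eta$ near $1$ the ball $B^*$ can meet $M^c$ in a set of volume of order $r^d$, and this set sits near $\partial M$, not in a thin slab around the tangent plane at $p$, so no rolling-ball estimate on $\partial M$ brings it down to $O(r^{d+1}/\rho)$. Indeed, if $A^c$ is read as the complement in $\mathbb{R}^d$, the stated inequality itself fails in this configuration (take $\eta=1$, $e_p$ pointing at a nearest boundary point of $M$, $\rho$ large: then $B^*\subset R$, yet roughly half of $B^*$ can lie outside $M$ while $\pi_d(1)=0$). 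The statement must be read, as its use in Lemma~\ref{lem:eper} makes clear (only the $\mu$-mass of $A^c$ enters there), with the complement taken relative to $M$, i.e.\ with $R^c\cap M$ in place of $A^c$; with that reading your proof closes with a one-line observation in place of the appeal to $\partial M$: every $x$ in the cap $H^-\cap B^*$ satisfies $\|x-p\|^2\le r^2-\eta^2r^2\le r^2$, so the cap lies in $B(p,r)\subset M$ (because $p\in M_r$), hence intersecting with $M$ costs none of the flat-cap volume, while both discrepancy sets are already handled by your slab bound coming from $\partial R$ alone. (The paper's own proof makes the same identification silently, by equating $A^c\cap B^*$ with the subgraph region of $F$.)
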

\begin{proof}
For ease of notation, set $B = B(p+\eta r e_p,r)$.
Let $(\tilde{e}_1,\dots,\tilde{e}_d)$ be an orthonormal frame at $p$, with $\tilde{e}_d = e_p$.
Denote by $\tilde{x}_1,\dots,\tilde{x}_d$ the local coordinates in this frame, such that $p$ has coordinates 0.
Then $\partial A\cap M$ can be expressed locally as the set of points $\tilde{x}$ such that $\tilde{x}^d = F(\tilde{x}^1,\dots,\tilde{x}^{d-1})$ for some function $F$, and, if we set $\tilde{x}^{(d)}=(\tilde{x}^1,\dots,\tilde{x}^{d-1})$, then
\begin{align*}
\vol_d(B\cap A^c) & =  \int_B\mathbf{1}\{ \tilde{x}^d < F(\tilde{x}^{(d)})\}{\rm d}\tilde{x}\\
& =   \int_B \left[ \mathbf{1}\{ \tilde{x}^d < F(\tilde{x}^{(d)})\}\mathbf{1}\{\tilde{x}^d< 0\} + \mathbf{1}\{ \tilde{x}^d < F(\tilde{x}^{(d)})\}\mathbf{1}\{\tilde{x}^d> 0\} \right]{\rm d}\tilde{x}
\end{align*}
Since
$$\pi_d(\eta) r^d = \int_{B}\mathbf{1}\{\tilde{x}^d < 0\}{\rm d}\tilde{x}$$
it follows that
\begin{align*}
 \left|{\rm vol}_d\left(B_n \cap A^c\right) - \pi_d(\eta) r_n^d \right| 
&\leq  \int_B \left[ \mathbf{1}\{ \tilde{x}^d > F(\tilde{x}^{(d)})\}\mathbf{1}\{\tilde{x}^d< 0\} + \mathbf{1}\{ \tilde{x}^d < F(\tilde{x}^{(d)})\}\mathbf{1}\{\tilde{x}^d> 0\} \right]{\rm d}\tilde{x}  \\
&\leq \int_{B_n} \mathbf{1}\left\{ |\tilde{x}^d| \leq |F(\tilde{x}^{(d)})| \right\} {\rm d}\tilde{x}
\leq 2 \int_{\{\|\tilde{x}^{(d)}\| \leq r\}} |F(\tilde{x}^{(d)})| {\rm d}\tilde{x}^{(d)}.
\end{align*}
Expanding $F$ at 0, we have, for all $\tilde{x}$ with $\|\tilde{x}\|\leq r$,
$$F(\tilde{x}^{(d)}) = \sum_{i,j=1}^{d-1}G_{ij}(\xi)\tilde{x}^i\tilde{x}^j,$$
for some $\xi:=\xi(\tilde{x}^{(d)})$.
Since the reach bounds the principal curvatures by $1/\rho$ \cite{MR0110078}, we have $\sup_{p\in\partial A\cap M_r}\|G(p)\| \leq 1/\rho.$
Then, using the change of variable $u=r\tilde{x}$, we deduce that
\begin{align*}
\left|{\rm vol}_d\left(B(p + \eta r e_p,r) \cap A^c\right) - \pi_d(\eta) r_n^d \right| & \leq  2\omega_{d-1} \sup_{p\in\partial A\cap M }\|G(p)\| r^{d+1}\\
& \leq  2\omega_{d-1} r^{d+1}/\rho. \qedhere
\end{align*}
\end{proof}

\begin{lemma}
\label{lem:ball-vol}
There exists some constant $C>0$ such that, for all $r,\alpha$ satisfying $0<2r\le\alpha\le\operatorname{reach}(\partial \dM)$, and all $x$ in $M$,
\[\vol_d( B(x,\alpha) \cap M_{r}) \geq C\alpha^d.\]
\end{lemma}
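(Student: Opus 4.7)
The strategy is to exhibit, for every $x\in M$, a ball of radius proportional to $\alpha$ which is contained in $B(x,\alpha)\cap M_r$. We split on how close $x$ is to $\partial M$. Throughout, write $t:=\dist(x,\partial M)$.

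\textbf{Case 1: $t\ge \alpha$.} For any $y\in B(x,\alpha/2)$ one has $\dist(y,\partial M)\ge t-\alpha/2\ge \alpha/2\ge r$, so $B(x,\alpha/2)\subset M_r$. Since this ball lies in $B(x,\alpha)$, we get $\vol_d(B(x,\alpha)\cap M_r)\ge\omega_d(\alpha/2)^d$.

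\textbf{Case 2: $t<\alpha$.} Since $t<\alpha\le\operatorname{reach}(\partial M)$, the nearest point projection onto $\partial M$ is uniquely defined at $x$; call it $p$, and let $e_p$ be the inward unit normal at $p$. Then $x=p+te_p$. Because $\operatorname{reach}(\partial M)\ge\alpha$, $M$ is $\alpha$-convex (cf.\ Section~\ref{sub:notation}), so the ball $B_0:=B(p+\alpha e_p,\alpha)$ is contained in $M$. I will first upgrade $B_0$ to a ball contained in $M_r$, then cut it down so as to also fit inside $B(x,\alpha)$.

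\emph{Step A: $B(p+\alpha e_p,\alpha-r)\subset M_r$.} Every $q\in\partial M$ lies outside $B_0$, hence $|q-(p+\alpha e_p)|\ge\alpha$; by the triangle inequality, any $y$ with $|y-(p+\alpha e_p)|\le\alpha-r$ satisfies $|y-q|\ge r$, giving $\dist(y,\partial M)\ge r$.

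\emph{Step B: locate a ball in the intersection.} Search for a ball $B(q,\beta)$ with $q=p+\gamma e_p$ on the inward normal, contained both in $B(x,\alpha)$ and in $B(p+\alpha e_p,\alpha-r)$. The two containments require $|\gamma-t|+\beta\le\alpha$ and $|\gamma-\alpha|+\beta\le\alpha-r$.

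If $t+r\le\alpha$, take $\gamma=(\alpha+t+r)/2\in[t,\alpha]$ to balance the two constraints, yielding $\beta=(\alpha+t-r)/2\ge(\alpha-r)/2\ge\alpha/4$ (using $r\le\alpha/2$).

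If $t+r>\alpha$, in particular $t\ge\alpha-r\ge\alpha/2$, so set $\gamma=\alpha$, $\beta=\alpha-r$; the first constraint becomes $\alpha-t\le r$, which holds, while the second is an equality. Hence $B(p+\alpha e_p,\alpha-r)\subset B(x,\alpha)\cap M_r$ and $\beta\ge\alpha/2$.

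In either sub-case we get a ball of radius at least $\alpha/4$ inside $B(x,\alpha)\cap M_r$, so $\vol_d(B(x,\alpha)\cap M_r)\ge\omega_d(\alpha/4)^d$.

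Combining the two cases, the conclusion holds with $C=\omega_d/4^d$. The only delicate point is the identification $x=p+te_p$ and the deduction that a rolling ball of radius $\alpha$ sits inside $M$ tangent to $\partial M$ at $p$; both follow from $\operatorname{reach}(\partial M)\ge\alpha$ as recalled in Section~\ref{sub:notation}. Everything else is elementary triangle-inequality bookkeeping along the inward normal.
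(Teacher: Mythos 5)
Your proof is correct and follows essentially the same strategy as the paper's: both arguments inscribe a ball of radius $\alpha/4$ inside $B(x,\alpha)\cap M_r$ by exploiting the interior rolling-ball property guaranteed by $\operatorname{reach}(\partial M)\ge\alpha$. The paper organizes the bookkeeping slightly differently (it shifts $x$ by $r+\alpha/4$ toward the center of a ball of radius $\operatorname{reach}(\partial M)$ contained in $M$, without your case split on $\dist(x,\partial M)$), but the geometric content is the same.
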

\begin{proof}
The main argument is to include a ball of radius $\alpha/4$ into $B(x,\alpha)\cap M_r$. We can proceed the following way. First, because $\rho:=\operatorname{reach}(\partial \dM)>0$, for any $x\in M$ there is $y\in M$ such that $x\in B(y,\rho)\subset M$. Second, since  ${\rm dist}(y,\partial M)\ge \rho$ and $\rho\ge 2r$, we have $y\in M_r$ and $B(y,\rho-r)\subset M_r$. Hence
\[
B(x,\alpha)\cap B(y,\rho-r) \subset B(x,\alpha) \cap M_{r}.
\]
If $y=x$, the result is trival. Otherwise, let $z:=x+(r+\alpha/4)(y-x)/\|y-x\|$ and note that $B(z,\alpha/4)$ is a ball of radius $\alpha/4$ included in $B(x,\alpha)\cap B(y,\rho-r)$.
\end{proof}

\subsection{Integration over tubes}
\label{sub:tubes}

We introduce the notion of tubes and some of their properties; see~\cite{MR2024928} for an extensive treatment.  Let $S$ be a submanifold of $\mathbb{R}^d$.  The {\it tubular neighborhood} of radius $r>0$ about $S$, denoted $\cV(S,r)$, is the set of points $x$ in $\mathbb{R}^d$ for which there exists $s \in S$ with $\|x -s\| < r$ and such that the line joining $x$ and $s$ is orthogonal to $S$ at $s$.  When $S$ is without boundary, $\cV(S,r)$ coincides with the set of points $x$ in $\mathbb{R}^d$ at a distance no more than $r$ from $S$.  If $S$ has boundary, then the tube coincides with the set of points at distance no more than $r$, with the ends removed, corresponding to the points projecting onto $\partial S$.  Assume $S$ is of codimension 1, and oriented, and define $e_p$ as the (unit) normal vector of $S$ at $p \in S$.  When $r < {\rm reach}(S)$, $\cV(S,r)$ admits the following parameterization 
$$
\cV(S,r) = \{x = p+te_p: p \in S, -r\leq t \leq r\}.
$$

Denote by $\mathbb{II}_p$ the second fundamental form of $S$ at $p \in S$.
The infinitesimal change of volume function is defined on $S\times(-r;r)$ by $\vartheta(p,t) = \operatorname{det}(I-t\mathbb{II}_p)$; the dependence of $\vartheta$ on $S$ is omitted.
Given an integrable function $g$ on $\cV(S,r)$, we have:
$$\int_{\cV(S,r)}g(x){\rm d}x = \int_S\int_{-r}^rg(p,t)\vartheta(p,t){\rm d}t \, v_\sigma({\rm d}p),$$
where $v_\sigma$ is the Riemannian volume measure on $S$.

\begin{lemma}
\label{lem:chvol}
Assume $S$ is a submanifold of $\bbR^d$ of codimension 1, with $\rho := {\rm reach}(S) > 0$.  Then, for all $r < \rho$,
$$\sup_{p\in S}\sup_{-r\leq t \leq r} \vartheta(p,t) \leq (1 + r/\rho)^{d-1},$$
and
$$\sup_{p\in S}\sup_{-r\leq t \leq r} |\vartheta'(p,t)| \leq \frac{(d-1) (1 + r/\rho)^{d-1}}{\rho-r}$$
where $\vartheta'$ is the derivative of $\vartheta$ with respect to $t$.
\end{lemma}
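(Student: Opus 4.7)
The plan is to diagonalize the second fundamental form and reduce the determinant to a product of scalar factors controlled by the principal curvatures, which the reach hypothesis bounds from above in absolute value.

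First I would recall the standard fact (see~\cite[Thm.~4.18]{MR0110078}) that if ${\rm reach}(S) = \rho$, then every principal curvature $\kappa_i(p)$ of $S$ at $p$ satisfies $|\kappa_i(p)| \leq 1/\rho$. Diagonalizing $\mathbb{II}_p$ in an orthonormal basis of the tangent space at $p$ yields
\[
\vartheta(p,t) = \det(I - t\mathbb{II}_p) = \prod_{i=1}^{d-1}\bigl(1 - t\kappa_i(p)\bigr).
\]
Since $|t| \leq r < \rho$, each factor satisfies $|1 - t\kappa_i(p)| \leq 1 + r/\rho$, so the first bound follows immediately by taking the product.

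For the derivative bound, I would differentiate the product directly:
\[
\vartheta'(p,t) = -\sum_{j=1}^{d-1} \kappa_j(p)\prod_{i \neq j}\bigl(1 - t\kappa_i(p)\bigr).
\]
The key observation is that, using $|t\kappa_j(p)| \leq r/\rho < 1$, each factor $|1 - t\kappa_j(p)|$ is also bounded \emph{below} by $1 - r/\rho = (\rho - r)/\rho$. Hence each $|\kappa_j(p)|/|1 - t\kappa_j(p)| \leq (1/\rho) \cdot \rho/(\rho - r) = 1/(\rho - r)$, and factoring one term out via the identity
\[
\vartheta'(p,t) = -\vartheta(p,t)\sum_{j=1}^{d-1}\frac{\kappa_j(p)}{1 - t\kappa_j(p)}
\]
gives $|\vartheta'(p,t)| \leq (1+r/\rho)^{d-1}(d-1)/(\rho-r)$, uniformly in $p \in S$ and $t \in [-r,r]$.

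There is no real obstacle here beyond invoking the classical reach-curvature inequality; the argument is essentially algebraic once that input is in hand. The only subtlety worth flagging is that the lower bound $|1 - t\kappa_j(p)| \geq (\rho - r)/\rho$ is what dictates the $\rho - r$ in the denominator rather than $\rho + r$ or $\rho$, which explains why the bound degenerates as $r$ approaches the reach.
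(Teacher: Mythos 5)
Your proof is correct and follows essentially the same route as the paper: bound the principal curvatures by $1/\rho$ via the reach, bound $\vartheta(p,t)=\prod_i(1-t\kappa_i(p))$ factorwise, and use the logarithmic-derivative identity $\vartheta'=-\vartheta\sum_j \kappa_j/(1-t\kappa_j)$ with each summand bounded by $1/(\rho-r)$. Nothing is missing.
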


\begin{proof}
By~\cite[Thm.~4.18]{MR0110078}, the reach bounds the radius of curvature from below so that the principal curvatures $\kappa^{(1)},\dots,\kappa^{(d-1)}$ (the eigenvalues of the second fundamental form) are everywhere bounded (in absolute value) from above by $1/\rho$.    
Therefore, for $r < \rho$ and $-r\leq t \leq r$,
$$
0 \leq \vartheta(p,t) = \operatorname{det}(I-t\mathbb{II}_p) = \prod_{i=1}^{d-1}\left(1 - \kappa^{(i)}_p t\right) \leq (1 + r/\rho)^{d-1}.
$$

For the derivative of $\vartheta$, we have
$$\frac{\vartheta'(p,t)}{\vartheta(p,t)} = - \sum_{i=1}^{d-1}\frac{\kappa^{(i)}_p}{1-\kappa^{(i)}_p t}.$$
Hence
\[
|\vartheta'(p,t)| \leq \vartheta(p,t) (d-1) \frac{1/\rho}{1-r/\rho} \leq \frac{(d-1) (1 + r/\rho)^{d-1}}{\rho-r}. \qedhere
\] 
\end{proof}

The celebrated Weyl's tube formula~\cite{MR1507388} provides fine estimates for the volume of a tubular region around a smooth submanifold of $\bbR^d$.  We only require a rough upper bound of the right order of magnitude, which we state and prove here.

\begin{lemma}
\label{lem:tube}
For any bounded open subset $R \subset \mathbb{R}^d$ with ${\rm reach}(\partial R) = \rho>0$ and any $0 < r < \rho$,
\[\vol_{d}(\cV(\partial R, r))\le 2^d \vol_{d-1}(\partial R) \, r.\]
\end{lemma}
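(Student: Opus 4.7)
The plan is to integrate using the tube parameterization introduced in Section~\ref{sub:tubes} and control the Jacobian using Lemma~\ref{lem:chvol}. Since $R$ is a bounded open subset of $\mathbb{R}^d$ with ${\rm reach}(\partial R) = \rho > 0$, the boundary $\partial R$ is a codimension-$1$ submanifold of $\mathbb{R}^d$ without boundary (being the topological boundary of an open set), and the hypothesis $r < \rho$ is exactly the regime in which the tube $\cV(\partial R, r)$ admits the diffeomorphic parameterization $(p,t) \mapsto p + t e_p$ for $p \in \partial R$ and $t \in (-r,r)$.

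First I would apply the coarea-type formula recalled just before Lemma~\ref{lem:chvol}, namely
\[
\vol_d\bigl(\cV(\partial R, r)\bigr) = \int_{\partial R} \int_{-r}^{r} \vartheta(p,t)\, {\rm d}t\, v_\sigma({\rm d}p),
\]
taking $g \equiv 1$. Next I would invoke the first bound of Lemma~\ref{lem:chvol} to get $\vartheta(p,t) \leq (1 + r/\rho)^{d-1}$ uniformly in $p$ and $t$. Since $r < \rho$, we have $1 + r/\rho < 2$, hence $\vartheta(p,t) < 2^{d-1}$. Plugging this in,
\[
\vol_d\bigl(\cV(\partial R, r)\bigr) \leq 2^{d-1} \cdot (2r) \cdot \vol_{d-1}(\partial R) = 2^d \, \vol_{d-1}(\partial R)\, r,
\]
which is the desired bound.

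There is essentially no obstacle here: all the heavy lifting (the tube parameterization, the Jacobian estimate via principal curvatures bounded by $1/\rho$) has already been done in the preparatory material of Section~\ref{sub:tubes}. The only small point to verify is that the parameterization is valid, i.e.\ that $\partial R$ is a $C^{1,1}$ hypersurface on which the normal exponential map is a diffeomorphism onto $\cV(\partial R,r)$ for $r < \rho$; this is standard from Federer's reach theory and is already implicitly used throughout the section.
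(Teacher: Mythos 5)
Your proposal is correct and follows essentially the same route as the paper: the paper's proof also writes $\vol_d(\cV(\partial R,r)) = \int_{\partial R}\int_{-r}^{r}\vartheta(p,t)\,{\rm d}t\,v_\sigma({\rm d}p)$ and bounds $\vartheta$ by $(1+r/\rho)^{d-1}\le 2^{d-1}$ via Lemma~\ref{lem:chvol}, yielding the factor $2r\cdot 2^{d-1}=2^d r$. Your extra remark about the validity of the normal parameterization for $r<\rho={\rm reach}(\partial R)$ is a fine (and correct) point of care that the paper leaves implicit.
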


In particular, Lemma~\ref{lem:tube} implies 
\begin{equation} \label{tube}
\mu\left[\cV(\partial M,r)\right] \leq C r, \quad \forall r < {\rm reach}(\partial M),
\end{equation}
where $C$ is a constant depending only on $M$.

\begin{proof}
Using the uniform bound of the infinitesimal change of volume given in Lemma~\ref{lem:chvol}, we have
\begin{align*}
\vol_d\left[\cV(\partial R, r)\right] & = \int_{\partial R}\int_{-r}^{r} \vartheta(p,u){\rm d}u\, v_\sigma({\rm d}p)\\
& \leq \vol_{d-1}(\partial R) \, 2 r (1 + r/\rho)^{d-1} \leq 2^d \vol_{d-1}(\partial R) \, r. \qedhere
\end{align*}
\end{proof}

\subsection{Bounds on bias terms}
\label{sub:bias}

Recall the definition of $M_r$ in \eqref{mr}.

\begin{lemma}
\label{lem:evol}
Let $\phi_{A,r}$ be defined as in \eqref{eq:phi}.
There exists a constant $C$, depending only on $\dM$, such that, for any $A \subset \dM$ and $r < {\rm reach}(\partial M)$, 
$$\left|\frac{\tau_M}{\omega_d r^d} \expect{\phi_{A,r}(X_1, X_2)} - \mu(A)\right| \leq \mu(A \cap \dM_r^c).$$
\end{lemma}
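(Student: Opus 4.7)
The plan is to write $\expect{\phi_{A,r}(X_1,X_2)}$ as an explicit integral over $M$ and then isolate the contribution coming from points close to $\partial M$. Because $X_1,X_2$ are i.i.d.\ uniform on $M$ (density $\tau_M^{-1}\mathbf{1}_M$) and the kernel $\phi_{A,r}$ is symmetric in its two arguments, a direct Fubini exchange collapses the expectation to
\[
\expect{\phi_{A,r}(X_1,X_2)} = \frac{1}{\tau_M^2}\int_{A}\vol_d\bigl(B(x,r)\cap M\bigr)\,dx,
\]
so that, after multiplying by $\tau_M/(\omega_d r^d)$, the question reduces to how close the normalized local volume $\vol_d(B(x,r)\cap M)/(\omega_d r^d)$ is to $1$ on average over $x\in A$.

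The key observation I would then exploit is the defining property of $M_r$: whenever $x\in M_r$ the ball $B(x,r)$ sits entirely inside $M$, so $\vol_d(B(x,r)\cap M)=\omega_d r^d$ \emph{exactly}, with no curvature correction at all; on $M_r^c$ only the trivial bound $\vol_d(B(x,r)\cap M)\le \omega_d r^d$ is available. Splitting the integral at $M_r$, the piece over $A\cap M_r$ contributes $\vol_d(A\cap M_r)/\tau_M$ exactly, and subtracting the tautology $\mu(A) = \tau_M^{-1}\vol_d(A\cap M_r)+\tau_M^{-1}\vol_d(A\cap M_r^c)$ causes this bulk contribution to cancel without residue.

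What remains is a single integral over $A\cap M_r^c$ of an integrand taking values in $[-1,0]$, which the triangle inequality bounds in absolute value by $\vol_d(A\cap M_r^c)/\tau_M = \mu(A\cap M_r^c)$; this is exactly the claimed inequality. There is no real obstacle: the proof is a Fubini exchange plus one triangle inequality, leveraging exact cancellation on $M_r$. In particular the hypothesis $r<\operatorname{reach}(\partial M)$ plays no role in the inequality itself, and the constant $C$ alluded to in the statement can be taken to be $1$; the reach condition will only matter downstream, when the companion estimate \eqref{tube} from Lemma~\ref{lem:tube} is invoked to upgrade this bias bound to the quantitative $\mu(A\cap M_r^c)=O(r)$ rate that drives the consistency argument.
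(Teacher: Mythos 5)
Your argument is correct and is essentially the paper's own proof: by symmetry the expectation reduces to $\tau_M^{-2}\int_A \vol_d\bigl(B(x,r)\cap M\bigr)\,dx$, one splits $A$ into $A\cap M_r$ and $A\cap M_r^c$, uses the exact identity $\vol_d(B(x,r)\cap M)=\omega_d r^d$ on $M_r$ and the trivial upper bound on $M_r^c$, and the bulk term cancels against $\mu(A)$ leaving a remainder of size at most $\mu(A\cap M_r^c)$. Your side remarks (the constant can be taken to be $1$, and the reach hypothesis only matters later via the tube estimate \eqref{tube}) are also accurate.
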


\begin{proof}
Assume without loss of generality that $\tau_M = 1$. 
We first note that
$$
\expect{\phi_{A,r}(X_1, X_2)} = \expect{\mathbf{1}_A(X_1) {\bf 1}\{\|X_1 -X_2\| \leq r\}}.
$$
We partition $A$ into $A \cap \dM_r$ and $A \cap \dM_r^c$.  By conditioning on $X_1$, we have 
\begin{eqnarray*}
\expect{\mathbf{1}_{A \cap \dM_r}(X_1) {\bf 1}\{\|X_1 -X_2\| \leq r\}} &=& \omega_d r^d \mu(A \cap \dM_r) = \omega_d r^d \mu(A) -\omega_d r^d \mu(A \cap \dM_r^c); \\
\expect{\mathbf{1}_{A \cap \dM_r^c}(X_1) {\bf 1}\{\|X_1 -X_2\| \leq r\}} &\leq& \omega_d r^d \mu(A \cap \dM_r^c).
\end{eqnarray*}
Hence the result.
\end{proof}


\begin{lemma}
\label{lem:eper}
Let $A=R\cap M$, where $R$ is a bounded domain with smooth boundary and ${\rm reach}(\partial R) = \rho > 0$.
Let $\bar{\phi}_{A,r}$ be defined as in \eqref{eq:barphi}.
\begin{itemize}
\item[$(i)$]
There exists a constant $C$, depending only on $M$, such that, for any $A \subset \dM$ and $r <\min\{\rho/2, {\rm reach}(\partial \dM)\}$, 
$$\left|\frac{\tau_M}{\gamma_{d} r^{d+1}}\expect{\bar{\phi}_{A,r}(X_1, X_2)} - \nu(A)\right| \leq C\left(\vol_{d-1}(\partial R \cap \cV(\partial M,r)) + \vol_{d-1}(\partial R\cap M)\frac{r}{\rho}\right).$$
\item[$(ii)$] There exists a constant $C$, depending only on $M$, such that, for any $A\subset M$ and $r< \min\{\rho/2, {\rm reach}(\partial \dM)\}$,
\begin{equation}
\frac{\tau_M}{\gamma_{d} r^{d+1}}\expect{\bar{\phi}_{A,r}(X_1, X_2)} - \frac{\vol_{d-1}(\partial A \cap \dM_r)}{\vol_d(\dM)} \geq -C\nu(A) \frac{r}{\rho}.\label{eq:r-eper}
\end{equation} 
\end{itemize}
\end{lemma}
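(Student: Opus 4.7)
The plan is to express $\mathbb{E}[\bar\phi_{A,r}(X_1,X_2)]$ as a single integral over $R\cap M$ and parametrize it via tubular coordinates around $\partial R$, then invoke the estimates of Section~\ref{sub:tubes} to identify the main term as $\gamma_d r^{d+1}\vol_{d-1}(\partial R\cap M)$. Taking $\tau_M=1$ without loss of generality and using the symmetry of $X_1,X_2$ together with the identity $A^c\cap M=R^c\cap M$, one obtains
\[
\mathbb{E}[\bar\phi_{A,r}(X_1,X_2)] = I(r) := \int_{R\cap M}\vol_d(B(x,r)\cap R^c\cap M)\,dx.
\]
The integrand vanishes unless $\dist(x,\partial R)\le r$, so $x$ may be restricted to the one-sided tube $\cV(\partial R,r)\cap R$ and parametrized as $x=p+se_p$ with $p\in\partial R$, $s\in[0,r]$, and $e_p$ the inward unit normal to $R$. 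The change-of-variables formula from Section~\ref{sub:tubes} then gives
\[
I(r) = \int_{\partial R}\int_0^r \mathbf{1}_M(p+se_p)\,\vol_d(B(p+se_p,r)\cap R^c\cap M)\,\vartheta(p,s)\,ds\,v_\sigma(dp).
\]

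For part (i), I would split $\partial R$ into the bulk $\partial R\cap M_r$ and the boundary region $\partial R\setminus M_r\subset \partial R\cap \cV(\partial M,r)$. On the bulk, Lemma~\ref{lem:eta} gives $\vol_d(B(x,r)\cap A^c)=\pi_d(s/r)r^d + O(r^{d+1}/\rho)$; the excess $\vol_d(B(x,r)\cap M^c)$ arising when $B(x,r)$ leaks outside $M$ is controlled by the tube estimate of Lemma~\ref{lem:tube} applied to $\partial M$ and absorbed into the $\cV(\partial M,r)$ error term. Combining with Lemma~\ref{lem:chvol}'s estimate $\vartheta(p,s)=1+O(r/\rho)$, the substitution $s=\eta r$, and $\gamma_d=\int_0^1\pi_d(\eta)\,d\eta$, the bulk integral contributes
\[
\gamma_d r^{d+1}\vol_{d-1}(\partial R\cap M_r) + O\!\left(r^{d+2}\vol_{d-1}(\partial R\cap M)/\rho\right).
\]
On the boundary region, the crude bounds $\vol_d(B\cap R^c\cap M)\le\omega_d r^d$ and $\vartheta\le C$ yield a contribution of order $r^{d+1}\vol_{d-1}(\partial R\cap\cV(\partial M,r))$. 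Dividing by $\tau_M\gamma_d r^{d+1}$ and using $\lvert\vol_{d-1}(\partial R\cap M)-\vol_{d-1}(\partial R\cap M_r)\rvert\le\vol_{d-1}(\partial R\cap\cV(\partial M,r))$ produces the bound in part (i).

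Part (ii) will follow from the same computation by retaining only the lower bound coming from the bulk. The boundary integral is non-negative and so can be discarded, and the identities $\vol_{d-1}(\partial A\cap M_r)=\vol_{d-1}(\partial R\cap M_r)$ together with $\tau_M\nu(A)=\vol_{d-1}(\partial R\cap M)$ recast the curvature error as $-C\nu(A)r/\rho$. I expect the main obstacle to be the careful isolation of the $R^c$ contribution from the $M^c$ excess when applying Lemma~\ref{lem:eta} on the bulk: for $p\in\partial R\cap M_r$ and $s$ close to $r$, the ball $B(x,r)$ may leak outside $M$, so the argument must track this discrepancy precisely and absorb it into the $\cV(\partial M,r)$ error term for (i) and into the curvature term for (ii) without exceeding the stated bounds.
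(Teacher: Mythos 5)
Your overall strategy matches the paper's proof closely: express $\mathbb{E}[\bar\phi_{A,r}]$ as an integral over the one-sided tube of $\partial R$ inside $A$, parametrize in tubular coordinates, apply Lemma~\ref{lem:eta} for the cap volume and Lemma~\ref{lem:chvol} for the Jacobian on a ``bulk'' region, use crude bounds on a ``boundary'' region near $\partial M$, and recover $\gamma_d = \int_0^1\pi_d(\eta)\,d\eta$ after rescaling; part~$(ii)$ then follows by keeping only the lower bound from the bulk. This is exactly the structure of the paper's argument.

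The one place where you genuinely diverge is in \emph{where} you split, and that divergence creates a real gap that you yourself flag at the end. You split the surface $\partial R$ into $\partial R\cap M_r$ (bulk) and $\partial R\setminus M_r$ (boundary), then integrate each fiber $s\in[0,r]$ in full. But for $p\in\partial R\cap M_r$ with $\dist(p,\partial M)$ close to $r$ and $s$ close to $r$, the fiber point $x=p+se_p$ can be at distance nearly $0$ from $\partial M$, so the leakage $\vol_d(B(x,r)\cap M^c)$ can be of order $r^d$ --- comparable to the main term $\pi_d(s/r)r^d$, not a lower-order correction. Lemma~\ref{lem:tube}, which bounds the volume of a tube, does not by itself tame this pointwise blow-up; you would need a further decomposition of the $(p,s)$ domain according to how close $x$ is to $\partial M$, effectively a second split inside the bulk. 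The paper sidesteps all of this with a cleaner choice: it partitions the \emph{domain} $D=\{x\in A:\dist(x,\partial R)\le r\}$ into $D\cap M_r$ and $D\cap M_r^c$. On $D\cap M_r$ one has $B(x,r)\subset M$ automatically, so $\vol_d(B(x,r)\cap R^c\cap M)=\vol_d(B(x,r)\cap A^c)$ with no leakage term at all, and Lemma~\ref{lem:eta} applies directly; the piece $D\cap M_r^c$ projects into $\partial R\cap\cV(\partial M,r)$ and is bounded crudely, giving exactly the first error term in $(i)$. So the fix for the obstacle you identify is precisely to trade your surface split for the paper's domain split. Aside from this, the remaining steps you sketch (the identity $\vol_{d-1}(\partial A\cap M_r)=\vol_{d-1}(\partial R\cap M_r)$, absorbing $|\vol_{d-1}(\partial R\cap M)-\vol_{d-1}(\partial R\cap M_r)|$ into the $\cV(\partial M,r)$ term, dropping the non-negative boundary contribution for $(ii)$) are all correct and agree with the paper.
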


\begin{proof}
Assume without loss of generality that $\tau_M = 1$. 
Let $S$ denote $\partial R \cap \dM$.
Then
$$
\expect{\bar{\phi}_{A,r}(X_1, X_2)}
=
\mathbb{E}\left[\mathbf{1}_A(X_1)\mathbf{1}_{A^c}(X_2)\mathbf{1}\left\{\|X_1-X_2\|\leq r\right\} \right] 
=  \int_{D} \vol_d\left[B(x,r)\cap A^c\right] \mu({\rm d}x),
$$
where
$$D= \left\{x\in A\,:\,{\rm dist}(x,\partial R)\leq r\right\}.$$
Since $r < \rho$, the projection on $\partial R$ is well-defined on $D$, and any $x$ in $D$ can be written as $x = p + t e_p$, for $p \in \partial R$, and with $e_p$ the unit normal vector of $\partial R$ at $p$ pointing inwards.

We partition $D$ into $D \cap \dM_r$ and $D \cap \dM_r^c$.  
Denote by $S_r$ the projection of $D \cap \dM_r$ on $S$.  
We have
\begin{eqnarray*}
\int_{D \cap \dM_r} \vol_d\left[B(x,r)\cap A^c\right] \, {\rm d}x 
& = & \int_{S_r} \int_{-r}^0 \vol_d\left[B(p+te_p,r) \cap A^c\right] \vartheta(p,t) {\rm d}t \, v_\sigma({\rm d}p)\\
& = & r \int_{S_r} \int_0^1 \vol_d\left[ B(p-\eta re_p,r) \cap  A^c \right]\vartheta(p,r \eta) {\rm d}\eta \, v_\sigma({\rm d}p).
\end{eqnarray*}
Therefore
\begin{align}
& \left|\frac{1}{r^{d+1}}\int_{D \cap \dM_r} \vol_d\left[B(x,r)\cap A^c\right] {\rm d}x - \gamma_{d}\nu(A)\right| \label{1st-term} \\ 
&\qquad\leq \frac{1}{r^d} \int_{S_r} \int_0^1\left| \vol_d\left[ B(p-\eta re_p,r) \cap  A^c \right] - \pi_d(\eta)r^d\right|   \vartheta(p,r \eta) {\rm d}\eta \, v_\sigma({\rm d}p) \notag \\
&\qquad\quad+ \left| \int_{S_r} \int_0^1 \pi_d(\eta) \vartheta(p,r \eta) {\rm d}\eta \, v_\sigma({\rm d}p) - \gamma_{d}\nu(A)\right|. \notag
\end{align}
Lemma~\ref{lem:eta} provides the inequality $\left| \vol_d\left[ B(p-\eta re_p,r) \cap  A^c \right] - \pi_d(\eta)r^d\right| \leq 2\omega_{d-1}r^{d+1}/\rho$, and the first inequality of Lemma~\ref{lem:chvol} states that $\sup_{p\in S}\sup_{-r\leq t \leq t} \vartheta(p,t)\leq (1+r/\rho)^{d-1}$.
Since $r<\rho$, $\sup_{p\in S}\sup_{0\leq\eta\leq 1}\vartheta(p,\eta r) \leq 2^{d-1}$.
Hence, the first term on the right-hand side is bounded by
$$
2\omega_{d-1} (r/\rho) \int_{S_r} \int_0^1  \vartheta(p,r \eta) {\rm d}\eta \, v_\sigma({\rm d}p)
\leq 2^{d}\omega_{d-1} (r/\rho) \vol_{d-1}(S_r).
$$

To bound the second term, a Taylor expansion leads to the relation $\vartheta(p,r\eta) = 1 + \vartheta'(p,r\xi_\eta)r\eta$ for some $0<\xi_\eta<1$.
The second inequality of Lemma~\ref{lem:chvol} states that $\sup_{p\in S}\sup_{-r\leq t \leq r} |\vartheta'(p,t)| \leq (d-1) (1 + r/\rho)^{d-1}/(\rho-r)$
so that $\sup_{p\in S}\sup_{0\leq \eta\leq 1} |\vartheta'(p,r\xi_\eta)|$ is bounded by $(d-1)2^d/\rho$ since $r<\rho$.
Recall that the constant $\gamma_d$ is expressed as $\gamma_d= \int_0^1\pi_d(\eta){\rm d}\eta$.
Then the second term in the right-hand side of \eqref{1st-term} is bounded by
\begin{align*}
&\left| \int_{S_r}\int_0^1\pi_d(\eta) {\rm d}\eta \, v_\sigma({\rm d}p) - \gamma_{d}\nu(A)\right| + r \int_{S_r}\int_0^1\eta\pi_d(\eta)|\vartheta'(p,r\xi_\eta)|{\rm d}\eta \, v_\sigma({\rm d}p)\\
&\qquad\leq \gamma_{d} \left|\vol_{d-1}(S_r) - \vol_{d-1}(S)\right| + (d-1)2^d\gamma_{d} (r/\rho) \vol_{d-1}(S_r)\\
& \qquad \leq \gamma_{d} \vol_{d-1}(S\cap M_r^c) + (d-1)2^d\gamma_{d} (r/\rho) \vol_{d-1}(S_r),
\end{align*}
where we have used the fact that $S\backslash S_r \subset M_r^c$ since $S \cap M_r \subset S_r$.
Collecting terms, the term in \eqref{1st-term} is bounded by
\begin{equation} \notag
\gamma_{d} \vol_{d-1}(S \cap \dM_r^c) + C\frac{r}{\rho} \vol_{d-1}(S_r),
\end{equation}
for some constant $C$ independent of $M$.

For the integral over $D \cap \dM_r^c$, since $D$ is included in the intersection of tubes of radius $r$ about $\partial R$ and $\partial M$, i.e., $D\subset \cV(\partial R,r)\cap\cV(\partial M,r) $, we have
\begin{eqnarray*}
\int_{D \cap \dM_r^c} \vol_d\left[B(x,r)\cap A^c\right] \, {\rm d}x 
& \leq & \int_{\partial R\cap \cV(\partial M,r)} \int_{-r}^0 \vol_d\left[B(p+te_p,r) \cap A^c\right] \vartheta(p,t) {\rm d}t \, v_\sigma({\rm d}p)\\
& = & r \int_{\partial R\cap \cV(\partial M,r)} \int_0^1 \vol_d\left[ B(p-\eta re_p,r) \cap  A^c \right]\vartheta(p,r \eta) {\rm d}\eta \, v_\sigma({\rm d}p) \\
& \leq & 2^{d-1} \omega_d r^{d+1} \vol_{d-1}(\partial R \cap \cV(\partial M,r)),
\end{eqnarray*}
where we have used Lemma~\ref{lem:chvol} again to bound $|\vartheta(p,r\eta)|$ by $(1+r/\rho)^{d-1}\le 2^{d-1}$ in the last inequality.

Combining the two inequalities on the integrals over $D \cap \dM_r$ and $D \cap \dM_r^c$, we obtain that
\begin{align*}
& \left|\frac{1}{\gamma_{d} r^{d+1}}\expect{\bar{\phi}_{A,r}(X_1, X_2)} - \nu(A)\right|\\
&\quad \leq \vol_{d-1}(S \cap \dM_r^c) + C\frac{r}{\rho} \vol_{d-1}(S_r) + 2^{d-1} \omega_d \vol_{d-1}(\partial R \cap \cV(\partial M,r))\\
&\quad \leq C\left(\vol_{d-1}(\partial R \cap \cV(\partial M,r)) + \vol_{d-1}(S)\frac{r}{\rho}\right),
\end{align*}
which proves the first bound stated in Lemma~\ref{lem:eper}.\\

To prove $(ii)$, using the bound on \eqref{1st-term}, we deduce that
\begin{eqnarray*}
\frac{1}{\gamma_{d} r^{d+1}}\expect{\bar{\phi}_{A,r}(X_1, X_2)}  & \geq & \frac{1}{\gamma_{d} r^{d+1}} \int_{D \cap \dM_r} \vol_d\left[B(x,r)\cap A^c\right] {\rm d}x\\
& \geq & \vol_{d-1}(S) - \left[\vol_{d-1}(S\cap M_r^c) + \frac{C}{\gamma_{d}}\frac{r}{\rho}\vol_{d-1}(S_r)\right]\\
& \geq & \vol_{d-1}(S\cap M_r) - C\frac{r}{\rho}\vol_{d-1}(S_r), 
\end{eqnarray*}
and since $S_r\subset S$, the result follows.
\end{proof}

\subsection{Exponential inequalities}

\begin{proposition}
\label{prop:vol1}
Fix a sequence $r_n \to 0$.
Let $A\subset M$ be an arbitrary open subset of $M$.
There exists a constant $C$ depending only on $\dM$ such that, for any $\varepsilon>0$, and all $n$ large enough, we have
$$
\pr{\left| \mu_n(A) - \mu(A) \right| \geq \varepsilon} \leq 2 \exp\left(- \frac{n r_n^d \varepsilon^2}{C (1+\varepsilon)}   \right).
$$
In particular, if $n r_n^{d}/\log n \to \infty$, then $\mu_n(A)$ converges almost surely to $\mu(A)$ when $n\to\infty$.
\end{proposition}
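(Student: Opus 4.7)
The plan is to decompose
$$\mu_n(A) - \mu(A) = \big(\mu_n(A) - \mathbb{E}[\mu_n(A)]\big) + \big(\mathbb{E}[\mu_n(A)] - \mu(A)\big),$$
treating the centered (variance) term with Hoeffding's inequality for $U$-statistics and the bias term with the results of Section~\ref{sub:bias}. Since $\mu_n(A)$ is the $U$-statistic of scale $\tau_M/(\omega_d r_n^d)$ associated with the kernel $\phi_{A,r_n}$ defined in \eqref{eq:phi}, one has $\|\phi_{A,r_n}\|_\infty \le 1$ and, by conditioning on $X_1$,
$$\operatorname{Var}(\phi_{A,r_n}(X_1,X_2)) \le \mathbb{E}[\mathbf{1}\{\|X_1-X_2\|\le r_n\}] \le C\,r_n^d,$$
for a constant $C$ depending only on $\dM$ (via $\omega_d/\tau_M$). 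This gives the order of magnitude needed for the denominator in Hoeffding's bound.

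First I would handle the bias. By Lemma~\ref{lem:evol}, for $n$ large enough that $r_n<\operatorname{reach}(\partial M)$,
$$\Big|\mathbb{E}[\mu_n(A)] - \mu(A)\Big| = \Big|\tfrac{\tau_M}{\omega_d r_n^d}\mathbb{E}[\phi_{A,r_n}(X_1,X_2)]-\mu(A)\Big| \le \mu(A\cap M_{r_n}^c) \le \mu(\cV(\partial M, r_n)) \le C\,r_n,$$
where the last step uses the tube estimate \eqref{tube}. Since $r_n\to 0$, for $n$ large enough this bias is at most $\varepsilon/2$, so that $\{|\mu_n(A)-\mu(A)|\ge\varepsilon\}\subset\{|\mu_n(A)-\mathbb{E}[\mu_n(A)]|\ge\varepsilon/2\}$.

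Next I apply Theorem~\ref{lem:hoeff} to the centered kernel $\tilde\phi_n := \phi_{A,r_n}-\mathbb{E}[\phi_{A,r_n}(X_1,X_2)]$, which has $\|\tilde\phi_n\|_\infty\le 2$ and variance at most $Cr_n^d$. Writing the event as a deviation of $\frac{1}{n(n-1)}\sum_{i\ne j}\tilde\phi_n(X_i,X_j)$ by at least $t_n := \varepsilon\omega_d r_n^d/(2\tau_M)$, Hoeffding's bound yields (after a union bound for the two-sided version)
$$\mathbb{P}\bigl[|\mu_n(A)-\mathbb{E}[\mu_n(A)]|\ge\varepsilon/2\bigr] \le 2\exp\left(-\frac{n\,t_n^2}{5\sigma^2+3b\,t_n}\right) \le 2\exp\left(-\frac{n\,r_n^d\varepsilon^2}{C(1+\varepsilon)}\right),$$
after collecting the $r_n^d$ factors and absorbing the numerical constants into a single $C$. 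This is the stated inequality.

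For the almost-sure convergence, under $n r_n^d/\log n\to\infty$ we have $n r_n^d\varepsilon^2/(C(1+\varepsilon)) \ge 2\log n$ eventually, so the probabilities are summable and Borel--Cantelli delivers $\mu_n(A)\to\mu(A)$ a.s. The only mildly delicate point is keeping careful track of the rescaling between the raw $U$-statistic (to which Theorem~\ref{lem:hoeff} applies directly) and the normalized quantity $\mu_n(A)$, so that the $r_n^d$ in the variance bound partially cancels against the $r_n^{2d}$ in $t_n^2$ to leave a single $r_n^d$ in the exponent; apart from this bookkeeping, no serious obstacle is expected.
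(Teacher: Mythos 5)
Your proposal is correct and follows essentially the same route as the paper's proof: the same bias--variance decomposition, with the bias controlled by Lemma~\ref{lem:evol} and the tube bound \eqref{tube}, and the deviation term handled by Hoeffding's inequality for $U$-statistics applied to the centered kernel with variance of order $r_n^d$, followed by Borel--Cantelli. The only (harmless) differences are bookkeeping ones, e.g.\ taking $b\le 2$ for the centered kernel where $b\le 1$ suffices.
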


\begin{proof}

By the triangle inequality, we have
$$
\left| \mu_n(A) - \mu(A) \right| \leq \left| \mu_n(A) - \expect{\mu_n(A)} \right| + \left| \expect{\mu_n(A)} - \mu(A) \right|.
$$
For all $n$ large enough such that $r_n\leq{\rm reach}(\partial M)$, the second term on the right-hand side (the bias term) is bounded by $C r_n$ with $C$ depending only on $\dM$. 
Indeed, Lemma~\ref{lem:evol} states that the bias is lower than $\mu(A\cap M_{r_n}^c)$. And the tubular neighborhood of $\partial M$ of radius $r_n$, which contains $A\cap M_{r_n}^c$, has a volume bounded by $Cr_n$ by~\eqref{tube}.

Assume that $n$ is large enough such that $2 C r_n \leq \varepsilon$. 
We then apply Theorem~\ref{lem:hoeff}, which is Hoeffding's Inequality for $U$-statistics, to the first term (the deviation term) on the right-hand side with the kernel
$$\phi := \phi_{A,r_n} - \expect{\phi_{A,r_n}(X_1, X_2)}$$
and $t = \omega_d r^d \varepsilon/2$.
The kernel satisfies $\|\phi\|_\infty \leq 1$, and simple calculations yields
$$
{\rm Var}(\phi(X_1, X_2)) \leq \expect{\phi_{A,r_n}(X_1, X_2)^2} \leq \mu(A)\omega_d r_n^d/\tau_M \leq \omega_dr_n^d/\tau_M.
$$
From this we obtain the large deviation bound.
The almost sure convergence is then a simple consequence of the Borel-Cantelli Lemma.
\end{proof}

\begin{proposition} \label{prop:per1}
Fix a sequence $r_n \to 0$.
Let $A$ be an open subset of $M$ with smooth relative boundary and positive reach.
There exists a constant $C$ depending only on $M$ such that, for any $\varepsilon>0$, and for all $n$ large enough,
we have
$$
\pr{\left| \nu_n(A) - \nu(A) \right| \geq \epsilon} \leq 2 \exp\left(- \frac{n r_n^{d+1} \epsilon^2}{C (\nu(A) + \epsilon)} \right).
$$
In particular, if $n r_n^{d+1}/\log n \to \infty$, then
$$\nu_n(A) \to \nu(A), \quad n \to \infty, \quad \text{almost surely}.$$  
\end{proposition}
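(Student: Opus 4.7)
The plan is to decompose $|\nu_n(A) - \nu(A)|$ into a bias term and a deviation term exactly as in the proof of Proposition~\ref{prop:vol1}, and control each separately. By the triangle inequality,
\[
\left|\nu_n(A) - \nu(A)\right| \leq \left|\nu_n(A) - \expect{\nu_n(A)}\right| + \left|\expect{\nu_n(A)} - \nu(A)\right|.
\]
For the bias term, I would apply Lemma~\ref{lem:eper}$(i)$ with $R$ chosen so that $A = R \cap M$ (which is possible since $A$ has positive reach, so we can take $R$ to be an open neighborhood of $A$ bounded by $\partial A \cap M$ completed suitably; alternatively, by the remarks preceding the statement it suffices that $A$ itself has smooth relative boundary). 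This gives
\[
\left|\expect{\nu_n(A)} - \nu(A)\right| \leq C\bigl(\vol_{d-1}(\partial R \cap \cV(\partial M, r_n)) + \vol_{d-1}(\partial R \cap M)\, r_n/\rho\bigr),
\]
which tends to $0$ as $r_n \to 0$: the second summand vanishes linearly in $r_n$, and the first by dominated convergence since $\cV(\partial M, r_n) \downarrow \partial M$ and $\vol_{d-1}(\partial R \cap \partial M) = 0$ generically (and is always finite). Thus, for all $n$ large enough, the bias is at most $\eps/2$.

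For the deviation term, I would invoke Hoeffding's inequality for $U$-statistics (Theorem~\ref{lem:hoeff}) applied to the centered kernel
\[
\phi := \bar\phi_{A, r_n} - \expect{\bar\phi_{A, r_n}(X_1, X_2)},
\]
with the target $t = \gamma_d r_n^{d+1} \eps / (2\tau_M)$. The kernel satisfies $\|\phi\|_\infty \leq b = 1$, and since $\bar\phi_{A, r_n}$ takes only the values $0$ and $1/2$, we have $\bar\phi_{A, r_n}^2 \leq \bar\phi_{A, r_n}$. Hence
\[
{\rm Var}(\phi(X_1, X_2)) \leq \expect{\bar\phi_{A, r_n}(X_1, X_2)^2} \leq \expect{\bar\phi_{A, r_n}(X_1, X_2)},
\]
and by Lemma~\ref{lem:eper}$(i)$ the right-hand side is bounded by $C\, r_n^{d+1}(\nu(A) + o(1)) \leq C' r_n^{d+1} \nu(A)$ for $n$ large. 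Plugging these estimates into Theorem~\ref{lem:hoeff} gives
\[
\pr{\left|\nu_n(A) - \expect{\nu_n(A)}\right| \geq \eps/2} \leq 2 \exp\left(-\frac{n r_n^{d+1} \eps^2}{C(\nu(A) + \eps)}\right),
\]
after absorbing the terms of order $r_n^{d+1} \nu(A)$ coming from $\sigma^2$ and $r_n^{d+1} \eps$ coming from $bt$ into a single constant in the denominator.

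Combining the two bounds yields the claimed inequality for all sufficiently large $n$. The almost-sure convergence under $n r_n^{d+1}/\log n \to \infty$ then follows by the Borel--Cantelli lemma, since for fixed $\eps > 0$ the deviation probabilities are summable. The main obstacle I anticipate is the variance computation: one must recognize that the indicator structure of $\bar\phi_{A, r_n}$ yields $\mathbb{E}[\bar\phi^2] \leq \mathbb{E}[\bar\phi]$ and then match this expected value to $\nu(A)$ via Lemma~\ref{lem:eper}$(i)$, since otherwise a naive bound $\sigma^2 \leq 1$ would give a rate governed by $n r_n^{2(d+1)}$ rather than $n r_n^{d+1}$, which is strictly weaker and does not match the statement.
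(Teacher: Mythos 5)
Your proposal follows essentially the same route as the paper: triangle-inequality split into bias and deviation, bias controlled by Lemma~\ref{lem:eper}$(i)$, Hoeffding's inequality for $U$-statistics (Theorem~\ref{lem:hoeff}) applied to the centered kernel $\bar\phi_{A,r_n}-\expect{\bar\phi_{A,r_n}(X_1,X_2)}$ with the variance bounded via $\bar\phi^2\le\bar\phi$ and the expectation estimate of Lemma~\ref{lem:eper}$(i)$, then Borel--Cantelli. Your choice $t=\gamma_d r_n^{d+1}\eps/(2\tau_M)$ (without the $\nu(A)$ factor that appears in the paper's $t$) is in fact the one that reproduces the stated exponent $n r_n^{d+1}\eps^2/(C(\nu(A)+\eps))$, so the argument is correct and matches the paper's.
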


\begin{proof}
By the triangle inequality, we have
$$
\left| \nu_n(A) - \nu(A) \right| \leq \left| \nu_n(A) - \expect{\nu_n(A)} \right| + \left| \expect{\nu_n(A)} - \nu(A) \right|.
$$
Using the control on the bias in Lemma~\ref{lem:eper}-$(i)$, the second term on the right-hand side goes to 0 as $n\to\infty$.
Then for $n$ large enough, we apply Hoeffding's inequality of Theorem~\ref{lem:hoeff} to the first term on the right-hand side with the kernel
$$\phi := \bar{\phi}_{A,r_n} - \expect{\bar{\phi}_{A,r_n}(X_1, X_2)}$$
and $t := \gamma_{d} r^{d+1} \nu(A) \epsilon/2$.
The kernel satisfies $\|\phi\|_\infty \leq 1$, hence
$$
{\rm Var}(\phi(X_1, X_2)) \leq \expect{\bar{\phi}_{A,r_n}(X_1, X_2)^2} = \expect{\bar{\phi}_{A,r_n}(X_1, X_2)} \leq 2 \gamma_{d} \nu(A) r_n^{d+1}/\tau_M,
$$
where the last inequality follows from upper bound on the bias of Lemma~\ref{lem:eper}-$(i)$ for $n$ large enough.
From this we obtain the large deviation bound, and the almost sure convergence is a consequence of the Borel-Cantelli Lemma.
\end{proof}

\subsection{Proof of Theorem~\ref{theo:pointwise}}
The first statement of Theorem~\ref{theo:pointwise} is an immediate consequence of the exponential inequalities of Propositions~\ref{prop:vol1} and \ref{prop:per1}.

To prove the second statement, under the conditions of Theorem~\ref{theo:pointwise}, for any subset $A$ with smooth relative boundary, with probability one $\lim_n h_n(A) = h(A;M)$ while $h_n(A) \geq \frac{\omega_d}{\gamma_{d} r_n} H(\dG_{n, r_n})$, so that $\limsup_n \frac{\omega_d}{\gamma_{d} r_n} H(\dG_{n, r_n}) \leq h(A;M)$.
Then we obtain the upper bound of Theorem~\ref{theo:pointwise} by taking the infimum over all such subsets $A$.

\section{Proof of Theorems~\ref{theo:estim} and \ref{theo:discretemeasure}: consistent estimation}
\label{sec:unif}

Consistent estimation in the context of Theorem~\ref{theo:estim} is possible because the class $\cR_n$ is sufficiently rich as to include sets that approach Cheeger sets of $M$ and its complexity is controlled, so as to allow for a uniform convergence both in terms of discrete volume and discrete perimeter.  This control on the complexity of $\cR_n$ we exploit in building a covering for $\cR_n$, which is done in Section~\ref{sub:covering}, later used to obtain uniform versions of Propositions~\ref{prop:vol1} and \ref{prop:per1}.
Then Part \textit{(i)} of Theorem~\ref{theo:estim}, which states the convergence of a penalized graph Cheeger constant towards the Cheeger constant of $M$, is proved in Section~\ref{sub:proof_i}. 
Finally, Part \textit{(ii)}, which characterizes the accumulation points of a sequence of minimizing sets, is proved in Section~\ref{sub:proof_ii}. 
The convergence of the discrete measures associated with a sequence of minimizing sets (Theorem~\ref{theo:discretemeasure}) is proved in Section~\ref{sub:proof_3}.

\subsection{Covering numbers}
\label{sub:covering}

For $\rho > 0$, let $\cR_{\rho}$ be the class of open subsets $R \subset (0,1)^d$ with ${\rm reach}(\partial R) \geq \rho$.  Let $d_H(R,R')$ be the Hausdorff distance between two sets $R$ and $R'$, i.e., 
$$d_H(R,R') = \inf\left\{r>0\,:\,R\subset R'\oplus B(r)\quad\text{and}\quad R'\subset R\oplus B(r) \right\}.$$
Denote by $\cN\left(\varepsilon,\cR_{\rho},d_H\right)$ be the covering number of $\cR_{\rho}$ for the Hausdorff distance, i.e., the minimal number of balls of radius $\varepsilon$ for the Hausdorff distance, centered at elements in $\cR_{\rho}$ that are needed to cover $\cR_{\rho}$.

\begin{lemma}
\label{lem:vcan}
(i) There exists a constant $C$ depending only on $d$ such that, for any $\varepsilon>0$ and any $\rho>0$:
$$\log\cN(\varepsilon,\cR_{\rho},d_H) \leq C\left(\frac{1}{\varepsilon}\right)^d.$$
(ii) If $0 < \varepsilon <\rho$, then for any $R$ and $R'$ in $\cR_{\rho}$, if $d_H(R,R')\leq\varepsilon$, then $R\Delta R' \subset \cV(\partial R,\varepsilon) \cap \cV(\partial R',\varepsilon)$.
\end{lemma}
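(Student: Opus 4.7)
Parts (i) and (ii) are essentially independent and I address them in turn. For Part (i), my approach is a packing-and-signature argument at the level of the ambient cube that is uniform in $\rho$. Let $\{p_1,\ldots,p_N\}$ be an $\varepsilon/6$-net of $(0,1)^d$, so $N \leq C_d\,\varepsilon^{-d}$ by a volume packing. To each $R\in\cR_\rho$ attach the signature $T(R) = \{i : d(p_i,R) \leq \varepsilon/3\}\subset\{1,\ldots,N\}$; there are at most $2^N$ possible signatures. I verify that any two $R,R'\in\cR_\rho$ with $T(R)=T(R')$ satisfy $d_H(R,R') \leq \varepsilon$: for $x\in R$ its nearest net point $p_i$ has $\|x-p_i\|\leq\varepsilon/6$, so $d(p_i,R)\leq\varepsilon/6\leq\varepsilon/3$ and hence $i\in T(R)=T(R')$; then $d(p_i,R')\leq\varepsilon/3$ yields $y\in R'$ within $\varepsilon/3$ (plus arbitrarily small slack) of $p_i$, whence $\|x-y\|\leq\varepsilon/2<\varepsilon$, and the reverse direction is symmetric. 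Each signature class thus sits inside a single Hausdorff ball of radius $\varepsilon$, giving $\log\cN(\varepsilon,\cR_\rho,d_H)\leq N\log 2\leq C'_d\varepsilon^{-d}$ as claimed.

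For Part (ii), by exchanging $R$ and $R'$ it suffices to show that any $x\in R\setminus R'$ lies in $\cV(\partial R,\varepsilon)\cap\cV(\partial R',\varepsilon)$, which (using $\varepsilon<\rho$ and the tube parameterization of Section~\ref{sub:tubes}) reduces to $\max\{d(x,\partial R),\,d(x,\partial R')\}\leq\varepsilon$. The easier half $d(x,\partial R')\leq\varepsilon$ comes straight from the Hausdorff bound: pick $y_k\in R'$ with $\|x-y_k\|\to d(x,R')\leq\varepsilon$, and since $R'$ is open and $x\notin R'$ each segment $[x,y_k]$ crosses $\partial R'$ at some $z_k$ with $\|x-z_k\|\leq\|x-y_k\|$; taking $k\to\infty$ delivers the bound.

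The main obstacle is the other half, $d(x,\partial R)\leq\varepsilon$, where the positive-reach hypothesis on $\partial R'$ does its real work. I argue by contradiction: assume $d(x,\partial R)>\varepsilon$, so that $B(x,d(x,\partial R))\subset R$ with room to spare. Set $\varepsilon_0 := d(x,\partial R')\leq\varepsilon$ and let $z=\pi_{\partial R'}(x)$, unique because $\varepsilon_0<\rho$; let $n'$ be the inward unit normal to $\partial R'$ at $z$, so that $x=z-\varepsilon_0 n'$. Consider the outward ray $y(t):=z-tn'$ for $t>0$. The reach condition ${\rm reach}(\partial R')\geq\rho$ forces $y(t)\notin\overline{R'}$ and $d(y(t),\partial R')=t$ for every $t\in(0,\rho)$. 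Choose any $t$ with $\varepsilon<t<\min\{\rho,\,\varepsilon_0+d(x,\partial R)\}$, a non-empty interval by our assumptions. Then $\|y(t)-x\|=t-\varepsilon_0<d(x,\partial R)$ places $y(t)\in R$, while $d(y(t),R')=d(y(t),\partial R')=t>\varepsilon$, contradicting the inclusion $R\subset R'\oplus B(\varepsilon)$ implied by $d_H(R,R')\leq\varepsilon$. The subtle point is the tube parameterization of $y(t)$: without ${\rm reach}(\partial R')\geq\rho>\varepsilon$, the outward normal ray could re-enter $R'$ or fail to realize its $\partial R'$-distance as $t$, and the contradiction would evaporate.
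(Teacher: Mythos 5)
Your proof is correct. For part (i) you take essentially the same route as the paper: discretize the cube $(0,1)^d$ by $O(\varepsilon^{-d})$ points and encode each $R\in\cR_\rho$ by a subset of indices, so the number of Hausdorff balls needed is at most $2^{O(\varepsilon^{-d})}$; the paper records which balls $B(x_i,\varepsilon)$ of an $\varepsilon$-packing meet $R$ and approximates $R$ by the union of those balls (then doubles the radius to re-center in $\cR_\rho$), whereas your signature classes $T(R)=\{i: d(p_i,R)\le \varepsilon/3\}$ directly give class representatives in $\cR_\rho$ as centers --- a minor variant with the same entropy bound. For part (ii) the paper is content with a one-line remark (that $\partial R\oplus B(\rho)=\cV(\partial R,\rho)$ when the reach is at least $\rho$), which converts distance bounds into tube membership but leaves implicit the substantive inclusion, namely that a point $x\in R\setminus R'$ is within $\varepsilon$ of \emph{both} boundaries. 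You correctly identify that $d(x,\partial R')\le\varepsilon$ is the easy half coming from $d_H(R,R')\le\varepsilon$, and that $d(x,\partial R)\le\varepsilon$ is where the reach hypothesis does real work; your contradiction argument along the outward normal ray from the projection of $x$ onto $\partial R'$ --- using that a ball of radius $\rho$ rolls freely in the complement of $R'$, so $d(y(t),R')\ge t>\varepsilon$ while $y(t)\in R$ --- is a correct and complete justification of exactly the step the paper glosses over. The only cosmetic caveats are that $d_H(R,R')\le\varepsilon$ gives $d(x,R')\le\varepsilon$ (the inclusion $R\subset R'\oplus B(r)$ for every $r>\varepsilon$), which still contradicts $d(y(t),R')\ge t>\varepsilon$, and that the degenerate case $x\in\partial R'$ (i.e.\ $\varepsilon_0=0$) should be noted as running through the same argument with $z=x$; neither affects the validity.
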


\begin{proof}
Let $x_1,\dots,x_n$ be an $\eps$-packing of $(0,1)^d$, so $\cup_{i=1}^n B(x_i,\varepsilon)$ covers $(0,1)^d$ and $n \leq C \varepsilon^{-d}$ for some constant $C$ depending only on $d$.
For any set $R$ in $\cR_{\rho}$, define
$$I_\varepsilon(R) = \left\{i=1,\dots,n\,:\,B(x_i,\varepsilon)\cap R \neq\emptyset\right\}.$$
Then clearly, by definition of the covering, $R\subset \cup_{i\in I_\varepsilon(R)} B(x_i,\varepsilon)$, and
$$\cup_{i\in I_\varepsilon(R)} B(x_i,\varepsilon) \subset R\oplus B(2\varepsilon).$$
Therefore
$$d_H\left(\cup_{i\in I_\varepsilon(R)} B(x_i,\varepsilon) , R  \right) \leq 2\varepsilon.$$
Since when $R$ ranges in $\cR_{\rho}$, the cardinality of sets of the form $\cup_{i\in I_\varepsilon(R)} B(x_i,\varepsilon)$ is bounded by $2^{n}$, then the collection of Hausdorff balls of radius $2\varepsilon$ and centered set of  the form $\cup_{i\in I} B(x_i,\varepsilon)$, where $I$ is any subset of $\{1, \dots, n\}$, covers  $\cR_{\rho}$.
By doubling the radius of the balls, we can take centers in $\cR_{\rho}$, which proves the first part of the lemma.

The second part follows from the fact that if ${\rm reach}(\partial R) > \rho$, then $\partial R \oplus B(\rho) = \cV(\partial R,\rho)$, assuming, without loss of generality, that $\partial R$ has no boundary.
\end{proof}

We mention that the bound on the $\eps$-entropy of $\cR_\rho$ is rather weak.  Standard results by Kolmogorov and Tikhomirov~\cite{MR0124720} suggest a bound of the form $C (\rho \eps)^{-(d-1)/2}$.  Such a result would change the exponent for $r_n$ in Theorem~\ref{theo:estim} to $(3d+1)/2$.

\subsection{Perimeter bounds of a regular set}

The classical isoperimetric inequality provides a bound of the volume of a Borel set $R$ in terms of its perimeter (see e.g., Evans and Gariepy, 1992):
\begin{equation}
\label{eq:isoperimetric}
d\omega_d^{1/d} \vol_d(R)^{1-1/d}\le \vol_{d-1}(\partial R).
\end{equation}
But, in the case where $\partial R$ has positive reach, the perimeter may in turn be bounded by the volume, as stated in Lemma~\ref{lem:borneperi} below.
The proof uses the following inequality: for every Borel sets $R,S$
\begin{equation}
\vol_{d-1}\big(\partial(R\cup  S)\big)+\vol_{d-1}\big(\partial(R\cap S)\big) \le  \vol_{d-1}(\partial R)+\vol_{d-1}(\partial S).
\label{eq:per1} 
\end{equation}

\begin{lemma}
\label{lem:borneperi}
Let $R$ be a bounded open subset of $\mathbb{R}^d$ with
${\rm reach}(\partial R) = \rho>0$.
Then,
\[\vol_{d-1}(\partial R)\le d \vol_d( R)/\rho.\]
\end{lemma}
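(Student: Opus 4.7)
The plan is to use the coarea formula together with the tube formula on the interior of $R$. Since ${\rm reach}(\partial R)=\rho$, the nearest-point projection onto $\partial R$ is well defined on the tube $\cV(\partial R,\rho)$ (cf.~Section~\ref{sub:tubes}), so letting $f(x):=\dist(x,\partial R)$ for $x\in R$, each level set $\{f=t\}$ with $0\le t<\rho$ coincides with the inward parallel hypersurface $\{p+te_p:\,p\in\partial R\}$, where $e_p$ is the unit inward normal at $p$. Because $f$ is $1$-Lipschitz, the coarea formula gives
$$\vol_d(R)\;\ge\;\int_0^\rho\vol_{d-1}\bigl(\{f=t\}\bigr)\,dt.$$

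Next I would lower-bound the integrand. Reparametrizing $\{f=t\}$ by $\partial R$, the $(d-1)$-volume element is multiplied by the Jacobian $\vartheta(p,t)=\prod_{i=1}^{d-1}(1-t\kappa^{(i)}_p)$ already appearing in the proof of Lemma~\ref{lem:chvol}, where the $\kappa^{(i)}_p$ are the principal curvatures of $\partial R$ and, by~\cite[Thm.~4.18]{MR0110078}, satisfy $|\kappa^{(i)}_p|\le 1/\rho$. For $t\in[0,\rho]$ each factor is therefore at least $1-t/\rho\ge 0$, so
$$\vol_{d-1}\bigl(\{f=t\}\bigr)\;\ge\;(1-t/\rho)^{d-1}\,\vol_{d-1}(\partial R).$$
Combining with the coarea bound and using $\int_0^\rho(1-t/\rho)^{d-1}\,dt=\rho/d$ yields $\vol_d(R)\ge(\rho/d)\,\vol_{d-1}(\partial R)$, which rearranges to the claimed inequality.

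The main technical point is justifying the tube/coarea formulas in this interior setting, but this follows directly from the hypothesis ${\rm reach}(\partial R)=\rho$, which ensures that the exponential map $(p,t)\mapsto p+te_p$ is a $C^{1,1}$-diffeomorphism from $\partial R\times(-\rho,\rho)$ onto $\cV(\partial R,\rho)$, together with the standard references~\cite{MR0110078,MR2024928}. As for the alternative route suggested by the hinted submodularity inequality~\eqref{eq:per1}, one can try to approximate $R$ (up to null sets) by a finite union of inner-tangent balls $B(p+\rho e_p,\rho)$---each satisfying the tight identity $\vol_{d-1}(\partial B)=d\,\vol_d(B)/\rho$---and iterate~\eqref{eq:per1}; extracting the sharp constant $d$ through that route requires a careful limiting argument, whereas the coarea approach above delivers it at once.
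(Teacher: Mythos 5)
Your argument is correct, but it takes a genuinely different route from the paper. You foliate the inner collar of $R$ by the parallel hypersurfaces $\{p+te_p:\,p\in\partial R\}$, $0<t<\rho$, which by Federer's results on sets of positive reach~\cite{MR0110078} are exactly the level sets of $f=\dist(\cdot,\partial R)$ inside $R$ (uniqueness of the metric projection gives the bijectivity of $p\mapsto p+te_p$ you need for the area formula), and then combine the coarea inequality $\vol_d(R)\ge\int_0^\rho\vol_{d-1}(\{f=t\})\,{\rm d}t$ with the pointwise Jacobian bound $\vartheta(p,t)=\prod_{i}(1-t\kappa^{(i)}_p)\ge(1-t/\rho)^{d-1}\ge 0$, which is the lower-bound counterpart of Lemma~\ref{lem:chvol}; integrating $(1-t/\rho)^{d-1}$ gives $\rho/d$ and hence the constant, with equality for a ball. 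The paper instead avoids any curvature or coarea computation: it writes $R$ as a countable union of freely rolling balls $B(x_i,\rho)$, shows by induction, using the submodularity of the perimeter~\eqref{eq:per1} together with the isoperimetric inequality~\eqref{eq:isoperimetric} applied to $R_n\cap B_{n+1}$, that $\vol_{d-1}(\partial R_n)\le d\,\vol_d(R_n)/\rho$ for every finite union $R_n$, and then passes to the limit via lower semicontinuity of the perimeter under $L^1$ convergence. Your approach buys a more geometric, one-shot derivation (a Steiner/Weyl-type computation already consonant with Section~\ref{sub:tubes}), at the price of invoking the $C^{1,1}$ regularity of $\partial R$, a.e.\ defined principal curvatures, and the area/coarea formulas; the paper's approach is more measure-theoretic and only uses the rolling-ball property, at the price of the approximation-and-limit step. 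One small correction to your closing remark: the iteration via~\eqref{eq:per1} does deliver the sharp constant $d$ without loss (the ball $R_1$ gives equality and the isoperimetric inequality absorbs each intersection term); the only "limiting argument" needed is the standard lower semicontinuity of the perimeter, so that route is less delicate than you suggest.
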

\begin{proof}
Since ${\rm reach}(\partial R) = \rho >0$, a ball of radius $\rho$ rolls freely in $R$.
Consequently $R$ can be written as a countable union of balls of radius $\rho$, i.e.,
$$R=\bigcup_{i=1}^\infty B(x_i,\rho).$$
Set $R_n=\cup_{i=1}^n B_i$ where $B_i=B(x_i,\rho)$.

  
Using the decomposition $R_{n+1}=R_n\cup B_{n+1}$, on the one hand we have
$$\vol_d(R_{n+1}) =  \vol_d(R_n\cup B_{n+1})= \vol_d(R_n)+\omega_d \rho^d -  \vol_d(R_n\cap B_{n+1}),$$
and on the other hand, using inequality~\eqref{eq:per1}, we have
$$\vol_{d-1}(\partial R_{n+1}) =
  \vol_{d-1}(\partial(R_n\cup B_{n+1}))
  \le \vol_{d-1}(\partial R_n) + d\omega_d \rho^{d-1} -
  \vol_{d-1}(\partial (R_n \cap B_{n+1})).
$$
Consequently
\begin{eqnarray*}
\vol_{d-1}(\partial R_{n+1}) - \frac d \rho \vol_d(R_{n+1})  & \leq & \vol_{d-1}(\partial R_n) - \frac d \rho \vol_d(R_n)\\
& & +\left[ \frac d \rho \vol_d(R_n\cap B_{n+1}) - \vol_{d-1}(\partial (R_n\cap B_{n+1})) \right].
\end{eqnarray*}
But, using the isoperimetric inequality~\eqref{eq:isoperimetric}, we may write
\begin{align*}
&    \frac{d}{\rho}\vol_d(R_n\cap B_{n+1})-
    \vol_{d-1}\big(\partial (R_n\cap B_{n+1})\big)\\
    &\quad \le
    \frac{d}{\rho}\vol_d(R_n\cap B_{n+1}) - 
    d \omega_d^{1/d}\bigg(\vol_d(R_n\cap B_{n+1})\bigg)^{1-1/d}
    \\
    & \quad\le  \bigg(\vol_d(R_n\cap B_{n+1})\bigg)^{1-1/d} \bigg[ 
    \frac{d}{\rho} \vol_d(R_n\cap B_{n+1})^{1/d}-d\omega_d^{1/d}
    \bigg] \le 0
  \end{align*}
  since, in the last bracket, $\vol_d(R_n\cap B_{n+1})\le 
  \vol_d(B_{n+1})=\omega_d\rho^d$. 
Therefore, for all $n\geq 1$, we have
$$\vol_{d-1}(\partial R_{n+1}) - \frac d \rho \vol_d(R_{n+1})   \leq  \vol_{d-1}(\partial R_n) - \frac d \rho \vol_d(R_n).$$
But since $R_1$ is a ball of radius $\rho$, we have $ \vol_{d-1}(\partial R_1) - d \vol_d(R_1)/\rho  = 0$ and so
\[ \vol_{d-1}(\partial R_n) - \frac d \rho \vol_d(R_n) \leq 0\quad\text{for all $n\geq 1$}.\]
Since $R_n$ converges to $R$ in $L^1$, it follows from the lower semi-continuity of the perimeter, see e.g.~\cite[Prop.~2.3.6]{henrot-pierre}, that $\liminf_n \vol_{d-1}(\partial R_n) \geq \vol_{d-1}(\partial R)$. This concludes the proof.
\end{proof}

\subsection{Exponential inequalities}

We prove the uniform versions of Propositions~\ref{prop:vol1} and~\ref{prop:per1} for the class $\cR_\rho$.  

\begin{proposition}
\label{prop:vol}
There exists a constant $C$ depending only on $\dM$ such that, for any $\eps, r > 0$ and all $n$ satisfying
$nr^d \rho^d \varepsilon^{d+2} > C$ and $\eps > C r$, we have
\begin{equation} \label{eq:vol}
\pr{ \sup_{R\in\cR_{\rho}} \left| \mu_n(R) - \mu(R) \right| \geq\varepsilon} 
\leq 2 \exp\left(- \frac{n r^d \varepsilon^2}{C (1 + \eps)}\right).
\end{equation}
\end{proposition}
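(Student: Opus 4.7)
The plan is to upgrade Proposition~\ref{prop:vol1} from a pointwise to a uniform bound over $\cR_\rho$ via a Hausdorff covering argument. Fix $\eps' = c\rho\eps$ with $c$ a small constant depending only on $d$ and $M$, and by Lemma~\ref{lem:vcan}(i) pick an $\eps'$-covering $\{R_1, \ldots, R_N\}$ of $\cR_\rho$ with $N \leq \exp\big(C_1(\rho\eps)^{-d}\big)$. For any $R \in \cR_\rho$ with nearest center $R_j$, the triangle inequality reads
\begin{equation*}
|\mu_n(R) - \mu(R)| \leq |\mu_n(R) - \mu_n(R_j)| + |\mu_n(R_j) - \mu(R_j)| + |\mu(R_j) - \mu(R)|.
\end{equation*}
The middle term is directly Proposition~\ref{prop:vol1}; the aim is to dominate the other two terms by quantities depending only on the center $R_j$.

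The key input is Lemma~\ref{lem:vcan}(ii): since $\eps' < \rho$, one has $R \Delta R_j \subset \cV(\partial R_j, \eps')$, a set determined by the center alone. Combining Lemma~\ref{lem:tube} and Lemma~\ref{lem:borneperi}, the latter using $\vol_d(R_j) \leq 1$ because $R_j \subset (0,1)^d$, gives $\vol_d(\cV(\partial R_j, \eps')) \leq 2^d d\,\eps'/\rho$. Hence the deterministic bias $|\mu(R_j) - \mu(R)|$ and the expected oscillation $\expect{\mu_n(\cV(\partial R_j, \eps'))}$ are both at most $C_2\eps'/(\rho\tau_M)$, which is at most $\eps/4$ by the choice of $c$. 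For the random oscillation itself, additivity of $A \mapsto \mu_n(A)$ gives $|\mu_n(R) - \mu_n(R_j)| \leq \mu_n(R \Delta R_j) \leq \mu_n(\cV(\partial R_j, \eps'))$, so a second application of Proposition~\ref{prop:vol1} to the \emph{fixed} set $\cV(\partial R_j, \eps')$ controls this by $\eps/2$ on an event of probability at least $1 - 2\exp(-nr^d\eps^2/C_3)$.

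A union bound over the $2N$ events (one pointwise, one tube, per center) multiplies the failure probability by at most $\exp\big(C_1(\rho\eps)^{-d}\big)$, producing a net exponent of order $-nr^d\eps^2/C_3 + C_1(\rho\eps)^{-d}$. Under the hypothesis $nr^d\rho^d\eps^{d+2} > C$ with $C$ large enough, the entropy term is dominated by half of the Hoeffding exponent and absorbed into a slightly larger constant, yielding the required form $2\exp\big(-nr^d\eps^2/(C(1+\eps))\big)$. The companion assumption $\eps > Cr$ absorbs the $O(r)$ bias from Proposition~\ref{prop:vol1} into the deviation $\eps$.

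The main obstacle is calibration: $\eps'$ has to scale as $\rho\eps$ (rather than $\rho$ or $\eps$ alone), so that the Hausdorff entropy $(\rho\eps)^{-d}$ and the Hoeffding exponent $nr^d\eps^2$ combine into the advertised hypothesis $nr^d\rho^d\eps^{d+2} > C$. A secondary point worth checking is that Proposition~\ref{prop:vol1} is stated for open subsets of $M$, whereas $\cV(\partial R_j, \eps')$ is an open subset of $\bbR^d$; its proof, however, uses only that the U-statistic kernel is $[0,1]$-valued and that the bias is $O(r)$ via \eqref{tube}, both of which hold for any measurable $A \subset \bbR^d$ under the convention $\mu(A) = \vol_d(A \cap M)/\tau_M$.
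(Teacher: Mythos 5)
Your proposal is correct and follows essentially the same route as the paper's proof: a Hausdorff covering of $\cR_\rho$ at scale proportional to $\rho\eps$ via Lemma~\ref{lem:vcan}, control of the oscillation between a set and its covering center through the tube $\cV(\partial R_k,\cdot)$ using Lemmas~\ref{lem:tube} and~\ref{lem:borneperi}, concentration at the centers and at the tubes, and a union bound whose entropy term $(\rho\eps)^{-d}$ is absorbed under the hypothesis $n r^d \rho^d \eps^{d+2} > C$, with $\eps > Cr$ handling the bias. The only (harmless) difference is that you invoke Proposition~\ref{prop:vol1} as a black box for both the centers and the tube sets, where the paper applies Hoeffding's inequality (Theorem~\ref{lem:hoeff}) directly with the sharper variance bound $\sim r^d\eta/\rho$ for the tubes; your version is slightly lossier but yields the same stated bound, and you correctly flag why the pointwise result extends to sets not contained in $M$.
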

\begin{proof}
The bias term is dealt exactly as in Proposition~\ref{prop:vol1}, obtaining 
$$\left|\mathbb{E}\left[\mu_n(R)\right] - \mu(R) \right| \leq C_0 r,$$
valid for all $R\in\cR_{\rho}$, so assuming $\eps > 2 C_0 r$, we may focus on bounding the variance term
$$\mu_n(R) - \mathbb{E}\left[\mu_n(R)\right].$$  
Define the kernel class
\begin{equation}
\label{eq:fn}
\cF = \{\phi_{R,r}\,:\,R\in\cR_{\rho}\},
\end{equation}
where $\phi_{R,r}$ is defined in \eqref{eq:phi}. 
Let $U_n(\phi)$ be the U-process over $\cF$ defined by
$$U_n(\phi) = \frac{1}{n(n-1)}\sum_{i\neq j} \phi(X_i,X_j).$$
Observe that
$$\sup_{R\in\cR_{\rho}}\left| \mu_n(R) - \expect{\mu_n(R)} \right|  = \frac{\tau_M}{\omega_d r^d} \sup_{\phi \in \cF} \left|U_n(\phi) - \mu^{\otimes 2}(\phi)\right|.$$
Consider a minimal covering of $\cR_{\rho}$ of cardinal $K$ by balls centered at elements $R_1,\dots,R_K$ of $\cR_{\rho}$, and of radius $\eta<\rho$ for the Hausdorff distance.
By Lemma \ref{lem:vcan},
$$\log(K) \leq C_1 (1/\eta)^d.$$
For any $R$ in $\cR_{\rho}$, there exists $1\leq k \leq K$ such that $d_{H}(R,R_k) \leq \eta$, which implies that $R\Delta R_k \subset \cV(\partial R_k,\eta)$.
Also, by Lemma \ref{lem:tube}, there exists a constant $C_2$ depending only on the dimension $d$ such that $\vol_d(\cV(\partial R_k,\eta)) \leq C_2\eta/\rho$, for all $1\leq k\leq K$, which implies that
$$\mu\left(\cV(\partial R_k,\eta)\right) \leq C_3 \eta/\rho,\quad\text{for all $1\leq k \leq K$},$$
since $\eta<\rho$, and where $C_3$ now depends on $\dM$.

We have
\begin{eqnarray*}
\left|\phi_{R,r}(x,y) - \phi_{R_k,r}(x,y)\right| & = & \frac{1}{2}\left|\mathbf{1}_R(x) + \mathbf{1}_R(y) - \mathbf{1}_{R_k}(x) - \mathbf{1}_{R_k}(y)\right|\mathbf{1}\left\{\|x-y\|\leq r\right\}\\
 & \leq &  \frac{1}{2}\left(\mathbf{1}_{R\Delta R_k}(x) + \mathbf{1}_{R\Delta R_k}(y)\right) \mathbf{1}\left\{\|x-y\|\leq r\right\}.
\end{eqnarray*}
Next, consider the inequality
$$\left| U_n(\phi_{R,r}) - \mu^{\otimes 2}(\phi_{R,r})\right| \leq \left|U_n(\phi_{R,r})-U_n(\phi_{R_k,r})\right| + \left|U_n(\phi_{R_k,r}) - \mu^{\otimes 2}(\phi_{R_k,r})\right| + \left| \mu^{\otimes 2}(\phi_{R_k,r}) - \mu^{\otimes 2}(\phi_{R,r})\right|.$$

For the double expectations, we have,
\begin{eqnarray*}
\left| \mu^{\otimes 2}(\phi_{R_k,r}) - \mu^{\otimes 2}(\phi_{R,r})\right| & \leq & \mu^{\otimes 2}\left|\phi_{R_k,r} - \phi_{R,r}\right|\\
& = & \mathbb{E}\left[1_{R\Delta R_k}(X_1)\mathbf{1}\left\{\|X_1-X_2\|\leq r\right\}\right]\\
& = & \int_{R\Delta R_k}\mu\left(B(x,r)\right)\mu({\rm d}x)\\
& \leq & \int_{\cV(\partial R_k,\eta)}\mu\left(B(x,r)\right)\mu({\rm d}x)\\
& \leq & \frac{\omega_dr^d}{\tau_M} \mu\left(\cV(\partial R_k,\eta)\right)\\
& \leq & C_4r^d \eta/\rho,
\end{eqnarray*}
with $C_4$ still depending only on $M$.  The last inequality is a consequence of Lemmas~\ref{lem:tube} and~\ref{lem:borneperi}, and the fact that $\vol_d(R_k) \leq 1$ since $R_k \subset (0,1)^d$.

For the empirical averages, we have
\begin{eqnarray*}
\left|U_n(\phi_{R,r})-U_n(\phi_{R_k,r})\right| & \leq & \frac{1}{2} \frac{1}{n(n-1)} \sum_{i\neq j}\left(\mathbf{1}_{R\Delta R_k}(X_i) + \mathbf{1}_{R\Delta R_k}(X_j)\right) \mathbf{1}\left\{\|X_i-X_j\|\leq r\right\}\\
& \leq & \frac{1}{2} \frac{1}{n(n-1)}\sum_{i\neq j}\left(\mathbf{1}_{\cV(\partial R_k,\eta)}(X_i) + \mathbf{1}_{\cV(\partial R_k,\eta)}(X_j)\right) \mathbf{1}\left\{\|X_i-X_j\|\leq r\right\}\\
 & = & U_n\left(\phi_{\cV(\partial R_k,\eta)}\right).
\end{eqnarray*}

Therefore,
$$\sup_{R\in\cR_{\rho}} \left| U_n(\phi_{R,r}) - \mu^{\otimes 2}(\phi_{R,r})\right| \leq \max_{1\leq k \leq K}U_n\left(\phi_{\cV(\partial R_k,\eta)}\right) + C_4\frac{r^d \eta}{\rho} +  \max_{1\leq k \leq K}\left|U_n(\phi_{R_k,r}) - \mu^{\otimes 2}(\phi_{R_k,r})\right|.$$
Consequently, for any $\varepsilon>0$, we may write
\begin{align*}
& \mathbb{P}\left( \sup_{R\in\cR_{\rho}}\left| \mu_n(R) - \expect{\mu_n(R)} \right| \geq \varepsilon \right)\\
 & \quad =\mathbb{P}\left(  \sup_{\phi \in \cF} \left|U_n(\phi) - \mu^{\otimes 2}(\phi)\right| \geq \frac{\omega_dr^d\varepsilon}{\tau_M}\right)\\
 & \quad \leq \mathbb{P}\left(  \max_{1\leq k \leq K}U_n\left(\phi_{\cV(\partial R_k,\eta)}\right) \geq \frac{\omega_dr^d\varepsilon}{2\tau_M} - C_4\frac{r^d \eta}{\rho}\right)
 + \mathbb{P}\left(\max_{1\leq k \leq K}\left|U_n(\phi_{R_k,r}) - \mu^{\otimes 2}(\phi_{R_k,r})\right| \geq \frac{\omega_dr^d\varepsilon}{2\tau_M} \right)\\
 & \quad \leq K \max_{1\leq k \leq K} \mathbb{P}\left(U_n\left(\phi_{\cV(\partial R_k,\eta)}\right) \geq \frac{\omega_dr^d\varepsilon }{2\tau_M}- C_4\frac{r^d \eta}{\rho}\right)
  + K\max_{1\leq k \leq K} \mathbb{P}\left(\left|U_n(\phi_{R_k,r}) - \mu^{\otimes 2}(\phi_{R_k,r})\right| \geq \frac{\omega_dr^d\varepsilon}{2\tau_M} \right),
\end{align*}
by the union bound.
To bound the first term, note first that
$$\operatorname{Var}\left(\phi_{\cV(\partial R_k,\eta)}(X_1,X_2)\right) \leq \mathbb{E}\left[ \phi_{\cV(\partial R_k,\eta)}(X_1,X_2)^2\right] \leq \mathbb{E}\left[ \phi_{\cV(\partial R_k,\eta)}(X_1,X_2)\right],$$
with
$$\mathbb{E}\left[ \phi_{\cV(\partial R_k,\eta)}(X_1,X_2)\right]   \leq  \frac{\omega_dr^d}{\tau_M} \mu\left(\cV(\partial R_k,\eta)\right) \leq C_4 \frac{r^d\eta}{\rho},$$
for the same reasons as above.
Now take $\eta = \rho \min(\omega_d \varepsilon/(8C_4\tau_M), 1)$.
Then, for any $1\leq k \leq K$, by Hoedffding's inequality for U-statistics (Theorem~\ref{lem:hoeff}), we have,
\begin{eqnarray*}
\mathbb{P}\left(U_n\left(\phi_{\cV(\partial R_k,\eta)}\right) \geq \frac{\omega_dr^d\varepsilon}{2\tau_M} - C_4\frac{r^d\eta}{\rho}\right) & \leq &\mathbb{P}\left( U_n\left(\phi_{\cV(\partial R_k,\eta)}\right) - \mathbb{E}\left[U_n\left(\phi_{\cV(\partial R_k,\eta)}\right)\right] \geq \frac{\omega_dr^d\varepsilon}{4\tau_M}\right)\\
& \leq & \exp\left(-\frac{n(\omega_dr^d\varepsilon/4\tau_M)^2}{5(C_4 r^d \eta/\rho) + 3(\omega_dr^d\varepsilon/4\tau_M)}\right)\\
& \leq & \exp\left(-\frac{n r^d\varepsilon}{C_5}\right),
\end{eqnarray*}
for a constant $C_5 > 0$ depending only on $M$.
To bound the second term, since
$$\operatorname{Var}\left(\phi_{R_k,r}(X_1,X_2)\right) \leq \mathbb{E}\left[\phi_{R_k,r}(X_1,X_2)\right] \leq \omega_dr^d/\tau_M,$$
we may apply Lemma \ref{lem:hoeff} again to obtain the bound
\begin{eqnarray*}
\mathbb{P}\left(\left|U_n(\phi_{R_k,r}) - \mu^{\otimes 2}(\phi_{R_k,r})\right| \geq \frac{\omega_dr^d\varepsilon}{2\tau_M} \right) & \leq & \exp\left(-\frac{n(\omega_dr^d\varepsilon/2\tau_M)^2}{5\omega_dr^d + 3(\omega_dr^d\varepsilon/2\tau_M)} \right)\\
& \leq & \exp\left(-\frac{n r^d\varepsilon^2}{C_6 (1 + \eps)}\right),
\end{eqnarray*}
for a constant $C_6 > 0$ depending only on $M$.

With the choice of $\eta$ as above, the cardinal $K$ of the covering is such that $\log(K) \leq C_7(\varepsilon\rho)^{-d}$, for some constant $C_7$ depending only on $\dM$, and we obtain the bound
\begin{align*}
& \mathbb{P}\left( \sup_{R\in\cR_{\rho}}\left| \mu_n(R) - \expect{\mu_n(R)} \right| \geq \varepsilon \right)\\
& \quad \leq K\exp\left(-\frac{n r^d\varepsilon}{C_5} \right) + K\exp\left(-\frac{n r^d\varepsilon^2}{C_6 (1 + \eps)} \right)\\
&\quad\leq 2\exp\left( C_7 (\varepsilon\rho)^{-d} - \frac{n r^d \varepsilon^2}{C_8 (1+\eps)}  \right)\\
&\quad\leq 2\exp\left(-\frac{nr^d\varepsilon^2}{C_9 (1+\eps)}\right),
\end{align*}
if $nr^d\varepsilon^{d+2} \rho^d > C_9$, for a constant $C_9$ depending only on $M$.
\end{proof}

For the perimeter, we only control the variance, as the bias may not be controlled uniformly over $\mathcal{R}_\rho$.  Indeed, consider the case where $M$ is a hypercube with rounded corners so as to satisfy the condition on its reach, and let $R$ be another hypercube with rounded corners included in $M$ sharing one of its faces with $M$.  Then given a sample $X_1, \dots, X_n$, it is possible to translate $R$ inside $M$ just enough that the translate does not share a boundary with $M$, while its discrete volume and perimeter are left equal to those of $R$.

\begin{proposition}
\label{prop:perim}
There exists a constant $C$ depending only on $\dM$ such that, for any $\eps > 0$, $\rho < 1$, $r < \min({\rm reach}(M), \rho/2)$ and all $n$ satisfying
$n r^{2d+1} \rho^{d+1} \varepsilon^{d+2} > C,$
we have
$$\pr{ \sup_{R\in\cR_{\rho}} \left| \nu_n(R) - \mathbb{E}[\nu_n(R)] \right| \geq \varepsilon} 
\leq 2\exp\left( -\frac{nr^{d+1}\rho\varepsilon^2}{C(1+\rho\eps)}  \right).$$
\end{proposition}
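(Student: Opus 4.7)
The plan is to adapt the chaining argument used in the proof of Proposition~\ref{prop:vol}, now applied to the perimeter kernel $\bar{\phi}_{R,r}$ from \eqref{eq:barphi}, and, as pointed out in the paragraph preceding the statement, to skip the bias step entirely since the target is the centered deviation $\nu_n(R) - \expect{\nu_n(R)} = \frac{\tau_M}{\gamma_{d} r^{d+1}} \bigl(U_n(\bar{\phi}_{R,r}) - \mu^{\otimes 2}(\bar{\phi}_{R,r})\bigr)$. First I would fix a minimal Hausdorff $\eta$-covering $R_1,\dots,R_K$ of $\cR_{\rho}$ with $\eta<\rho$ and centers in $\cR_{\rho}$; Lemma~\ref{lem:vcan} yields $\log K \leq C/\eta^d$ and ensures $R\Delta R_k \subset \cV(\partial R_k,\eta)$ whenever $d_H(R,R_k)\leq\eta$. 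A pointwise computation identical in spirit to the volume case, using $|\mathbf{1}_R\mathbf{1}_{R^c} - \mathbf{1}_{R_k}\mathbf{1}_{R_k^c}|\leq \mathbf{1}_{R\Delta R_k}(x) + \mathbf{1}_{R\Delta R_k}(y)$, then gives
$$\bigl|\bar{\phi}_{R,r}(x,y) - \bar{\phi}_{R_k,r}(x,y)\bigr| \leq 2\,\phi_{\cV(\partial R_k,\eta),r}(x,y),$$
so that $\sup_{R\in\cR_\rho} |U_n(\bar\phi_{R,r}) - \mu^{\otimes 2}(\bar\phi_{R,r})|$ splits into three pieces: (a) $\max_k |U_n(\bar{\phi}_{R_k,r}) - \mu^{\otimes 2}(\bar{\phi}_{R_k,r})|$; (b) $\max_k U_n(\phi_{\cV(\partial R_k,\eta),r})$; and (c) $\max_k \mu^{\otimes 2}(\phi_{\cV(\partial R_k,\eta),r})$.

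For pieces (b) and (c) I would use the tube estimate $\mu(\cV(\partial R_k,\eta))\leq C\eta/\rho$ (Lemmas~\ref{lem:tube} and~\ref{lem:borneperi} together with $R_k\subset(0,1)^d$ so that $\vol_d(R_k)\leq 1$), giving $\mu^{\otimes 2}(\phi_{\cV(\partial R_k,\eta),r})\leq C r^d \eta/\rho$. I then set $\eta = c\,r\rho\eps$ with $c$ small enough depending only on $M$, which makes (c) bounded by $\gamma_d r^{d+1}\eps/(4\tau_M)$, and guarantees that the expectation of the kernel in (b) is at most half of the target scale, reducing (b) to a standard centered-deviation for Hoeffding's inequality (Theorem~\ref{lem:hoeff}) with variance and envelope of the same order, giving a tail exponent stronger than the target when $\rho\eps\lesssim 1$.

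For piece (a), which drives the final rate, Lemma~\ref{lem:borneperi} combined with $R_k\subset(0,1)^d$ yields $\vol_{d-1}(\partial R_k\cap M)\leq d/\rho$, so by Lemma~\ref{lem:eper}(i) and positivity of $\bar{\phi}_{R_k,r}$,
$$\operatorname{Var}\bigl(\bar{\phi}_{R_k,r}(X_1,X_2)\bigr) \leq \expect{\bar{\phi}_{R_k,r}(X_1,X_2)} \leq C\, r^{d+1}/\rho.$$
Applying Theorem~\ref{lem:hoeff} with $t = \gamma_d r^{d+1}\eps/(2\tau_M)$, $b = 1$ and $\sigma^2 \leq C r^{d+1}/\rho$ produces a tail of the form $\exp\bigl(-n r^{d+1}\rho\eps^2/(C(1+\rho\eps))\bigr)$, exactly matching the target. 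A union bound over the $K$ representatives inflates the tail by a factor $K$ with $\log K \leq C/(r\rho\eps)^d$, and the hypothesis $n r^{2d+1}\rho^{d+1}\eps^{d+2} > C$ is precisely the condition that absorbs $\log K$ into the dominant exponent, closing the proof. The main bookkeeping obstacle is the calibration of $\eta$: the extra factor $r$ relative to the choice $\eta = c\rho\eps$ in Proposition~\ref{prop:vol} is forced by the $r^{d+1}$ scaling of the perimeter kernel and is the origin of the exponent $2d+1$ appearing in the condition on $n$.
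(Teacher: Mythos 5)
Your proposal is correct and follows essentially the same route as the paper: chaining over a Hausdorff $\eta$-covering with $\eta \sim r\rho\varepsilon$, the kernel comparison $|\bar{\phi}_{R,r}-\bar{\phi}_{R_k,r}| \leq 2\phi_{\cV(\partial R_k,\eta),r}$, the variance bound $\operatorname{Var}(\bar{\phi}_{R_k,r}) \lesssim r^{d+1}/\rho$ from Lemmas~\ref{lem:eper} and~\ref{lem:borneperi}, and a union bound absorbed by the hypothesis $nr^{2d+1}\rho^{d+1}\varepsilon^{d+2}>C$. Your diagnosis of the extra factor $r$ in $\eta$ (relative to Proposition~\ref{prop:vol}) as the source of the $2d+1$ exponent also agrees with the paper's calibration.
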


\begin{proof}
The proof follows that of Proposition~\ref{prop:vol}, with the symmetric kernel $\bar{\phi}_{R,r}$ defined in \eqref{eq:barphi} and the class $\bar{\cF}$ defined in \eqref{eq:fn} with $\phi_{R,r}$ replaced by $\bar{\phi}_{R,r}$.
Observe that
$$
\left| \nu_n(R) - \expect{\nu_n(R)} \right|  = \frac{\tau_M}{\gamma_{d} r^{d+1}} \sup_{\phi \in \bar{\cF}} \left|U_n(\phi) - \mu^{\otimes 2}(\phi)\right|.
$$
As in the proof of Proposition ~\ref{prop:vol}, we start with a minimal covering of $\cR_{\rho}$ of cardinal $K$ by balls of radius $\eta$ for the Hausdorff distance.
For any $R$ in $\cR_{\rho}$ at a Hausdorff distance no more than $\eta$ of an element $R_k$ of the covering, we have
\begin{eqnarray*}
\left|\mathbf{1}_R(x)\mathbf{1}_{R^c}(y) - \mathbf{1}_{R_k}(x)\mathbf{1}_{R_k^c}(y) \right| & \leq & \left|\mathbf{1}_R(x) - \mathbf{1}_{R_k}(x)\right|\mathbf{1}_{R^c}(y) + \mathbf{1}_{R_k}(x)\left| \mathbf{1}_{R^c}(y) - \mathbf{1}_{R_k^c}(y)\right|\\
 & = & \mathbf{1}_{R\Delta R_k}(x)\mathbf{1}_{R^c}(y) + \mathbf{1}_{R\Delta R_k}(y)\mathbf{1}_{R_k}(x)\\
 & \leq & \mathbf{1}_{R\Delta R_k}(x) + \mathbf{1}_{R\Delta R_k}(y).
\end{eqnarray*}
Hence,
$$\left| \bar{\phi}_{R,r}(x,y) - \bar{\phi}_{R_k,r}(x,y)\right|  \leq 2 \phi_{R\Delta R_k,r}(x,y) \leq 2 \phi_{\cV(\partial R_k,\eta)}(x,y), $$
and therefore, following the same arguments,
$$
\left|\mu^{\otimes 2} (\bar{\phi}_{R,r}) -\mu^{\otimes 2}( \bar{\phi}_{R_k,r})\right| \leq 2 \mu^{\otimes 2} (\phi_{\cV(\partial R_k,\eta)}) \leq C_1 r^d \eta/\rho,
$$
for a constant $C_1$ depending only on $M$; and also,
$$\left|U_n\left(\bar{\phi}_{R,r}\right) - U_n\left(\bar{\phi}_{R_k,r}\right)\right|  \leq 2 U_n\left(\phi_{\cV(\partial R_k,\eta)}\right).$$
Hence
\begin{align*}
& \mathbb{P}\left(\sup_{R\in\cR_{\rho}}\left|\nu_n(R) - \mathbb{E}\left[\nu_n(R)\right] \right| \geq \varepsilon \right)\\
& \quad \leq K\max_{1\geq k \geq K} \mathbb{P}\left(U_n\left(\phi_{\cV(\partial R_k,\eta)}\right) \geq \frac{\gamma_dr^{d+1}\varepsilon}{4\tau_M} - C_1\frac{r^d \eta}{2\rho} \right)\\
&\quad\quad  + K\max_{1\geq k \geq K} \mathbb{P}\left( \left| U_n(\bar{\phi}_{R_k,r}) - \mu^{\otimes2}(\bar{\phi}_{R_k,r}) \right|   \geq \frac{\gamma_dr^{d+1}\varepsilon}{2\tau_M} \right).\\
\end{align*}
Take $\eta = \rho \min\big(\gamma_d r \eps/(4C_1\tau_M), 1\big)$.
For the first term, for any $1 \leq k \leq K$, we have,
\begin{eqnarray*}
\mathbb{P}\left(U_n\left(\phi_{\cV(\partial R_k,\eta)}\right) \geq \frac{\gamma_dr^{d+1}\varepsilon}{4\tau_M} - C_1 \frac{r^d \eta}{2\rho} \right) & \leq & \mathbb{P}\left(U_n\left(\phi_{\cV(\partial R_k,\eta)}\right) - \mathbb{E}\left[U_n\left(\phi_{\cV(\partial R_k,\eta)}\right)\right] \geq \frac{\gamma_dr^{d+1}\varepsilon}{8\tau_M} \right)\\
& \leq & \exp\left(-\frac{n (\gamma_dr^{d+1}\varepsilon/8\tau_M)^2}{5(C_1 r^d \eta/\rho) + 3(\gamma_dr^{d+1}\varepsilon/8\tau_M)}\right)\\
& = & \exp\left(-\frac{n r^{d+1}\varepsilon^2}{C_2 (1 + \eps)}\right),
\end{eqnarray*}
for some constant $C_2 > 0$ depending only on $M$.
For the second term, since by Lemma \ref{lem:eper}, when $r\leq\rho/2$,
$$\operatorname{Var}\left(\bar{\phi}_{R_k,r}\right) \leq C_3 r^{d+1} / \rho,$$
for a constant $C_3$ depending only on $\dM$, we have
\begin{eqnarray*}
\mathbb{P}\left( \left| U_n(\bar{\phi}_{R_k,r}) - \mu^{\otimes2}(\bar{\phi}_{R_k,r}) \right|   \geq \frac{\gamma_dr^{d+1}\varepsilon}{2\tau_M} \right) & \leq & \exp\left(-n \frac{(\gamma_dr^{d+1}\varepsilon/2\tau_M)^2}{5(C_3 r^{d+1}/\rho) + 3(\gamma_dr^{d+1}\varepsilon/2\tau_M)} \right)\\
 & \leq & \exp\left(-\frac{n r^{d+1} \rho \varepsilon^2 }{C_4 (1 + \rho \eps)} \right),
\end{eqnarray*}
for a constant $C_4>0$ depending only on $M$.
Finally, with the choice of $\eta$ as above, the cardinal $K$ of the covering is such that $\log(K) \leq C_5 (r \rho \varepsilon)^{-d}$, for $C_5$ depending only on $M$.

Then
$$K\max_{1\geq k \geq K} \mathbb{P}\left(U_n\left(\phi_{\cV(\partial R_k,\eta)}\right) \geq \frac{\gamma_dr^{d+1}\varepsilon}{4\tau_M} - C_1 \frac{r^d \eta}{\rho}) \right) \leq \exp\left( -\frac{nr^{d+1}\varepsilon^2}{C_6 (1+\eps)}  \right),$$
if $n r^{2d+1} \rho^d \varepsilon^{d+2} > C_6,$
and
$$K\max_{1\geq k \geq K} \mathbb{P}\left( \left| U_n(\bar{\phi}_{R_k,r}) - \mu^{\otimes2}(\bar{\phi}_{R_k,r}) \right|   \geq \frac{\gamma_dr^{d+1}\varepsilon}{2\tau_M} \right) \leq \exp\left(-\frac{n r^{d+1} \rho \varepsilon^2 }{C_7 (1 + \rho \eps)} \right),$$
if $nr^{2d+1}\rho^{d+1} \varepsilon^{d+2} > C_7,$
where $C_6$ and $C_7$ depend on $M$ only.
Combining these inequalities, we conclude.
\end{proof}

\subsection{A uniform control on $h_n(A)$}
\label{sub:uniformcontrol}

As we argued earlier, the boundary of $M$ makes a uniform convergence of the perimeters of sets in $\cR_n$ impossible.  Our way around that is to compare the discrete perimeter of a set $R$ with its perimeter inside $M_{r_n}$, thus avoiding the boundary of $M$, i.e., $\vol_{d-1}(\partial R \cap M_{r_n})$, leading to a comparison between $h_n(R)$ and $h(R;M_{r_n})$.  We relate the latter to $h(R; M)$ in Section~\ref{sub:cont_Cheeger}.   
 
\begin{lemma}
\label{lem:approxh}
Under the conditions of Theorem~\ref{theo:estim}, with probability one, we have:
\begin{equation}
\label{eq:liminfdelinf}
\liminf_{n\to\infty} \inf_{R\in\cR_n}\left(h_n(R) - h(R; M_{r_n})\right)\ge 0.
\end{equation}
\end{lemma}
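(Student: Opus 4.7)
\emph{Plan.} The plan is to lift the pointwise argument behind Theorem~\ref{theo:pointwise} to a uniform statement over the class $\cR_n\subset\cR_{\rho_n}$, combining the uniform exponential inequalities of Propositions~\ref{prop:vol} and~\ref{prop:perim} with the one-sided bias estimate of Lemma~\ref{lem:eper}(ii), the tube bound~\eqref{tube}, and the perimeter-to-volume inequality of Lemma~\ref{lem:borneperi}. Throughout, the slow-decay hypothesis $r_n=o(\rho_n^\alpha)$ for every $\alpha>0$ will be used repeatedly to absorb the negative powers of $\rho_n$ that arise from working on the reach-constrained class.

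First, I would apply Proposition~\ref{prop:vol} with $\rho=\rho_n$ and a tolerance $\varepsilon_n\to 0$ chosen slowly enough that its precondition $nr_n^d\rho_n^d\varepsilon_n^{d+2}\to\infty$ is satisfied and its exponential tail is summable in $n$; this is feasible under the standing assumption $nr_n^{2d+1}\rho_n^\alpha\to\infty$ for every $\alpha>0$. Borel--Cantelli yields $\sup_{R\in\cR_n}|\mu_n(R)-\mu(R)|\le\varepsilon_n$ almost surely, eventually in $n$. Combined with $0\le\mu(R)-\tilde\mu(R)\le\mu(M\cap M_{r_n}^c)=O(r_n)$ from the tube bound, where $\tilde\mu(R):=\vol_d(R\cap M_{r_n})/\tau_M$, this gives $|\mu_n(R)-\tilde\mu(R)|=o(1)$ uniformly, hence $m_n(R)\le\tilde m(R)+o(1)$ with $\tilde m(R):=\min(\tilde\mu(R),\tilde\mu(R^c))$. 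In parallel, Proposition~\ref{prop:perim} gives $\sup_{R\in\cR_n}|\nu_n(R)-\mathbb{E}[\nu_n(R)]|\le\varepsilon_n$. Lemma~\ref{lem:eper}(ii) then gives $\mathbb{E}[\nu_n(R)]\ge\tilde\nu(R)-C\nu(R)r_n/\rho_n$, where $\tilde\nu(R):=\vol_{d-1}(\partial R\cap M_{r_n})/\tau_M$; and Lemma~\ref{lem:borneperi} applied to $R\subset(0,1)^d$ supplies $\nu(R)\le d/(\tau_M\rho_n)$ uniformly, so that the bias is $O(r_n/\rho_n^2)=o(1)$ upon taking $\alpha=3$. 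Consequently $\nu_n(R)\ge\tilde\nu(R)-o(1)$ uniformly in $R\in\cR_n$.

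Finally, writing $\tilde\nu(R)=h(R;M_{r_n})\,\tilde m(R)$ and substituting the bounds from the previous paragraph gives, after a short algebraic manipulation,
\[h_n(R)-h(R;M_{r_n})=\frac{\nu_n(R)-h(R;M_{r_n})\,m_n(R)}{m_n(R)}\;\ge\;-\frac{o(1)\bigl(1+h(R;M_{r_n})\bigr)}{m_n(R)}.\]
I expect the main obstacle to be turning this into a uniform $o(1)$ lower bound, because both $m_n(R)$ in the denominator and $h(R;M_{r_n})\le C/(\rho_n\tilde m(R))$ in the numerator can be badly behaved. The plan is a case analysis according to whether $\tilde m(R)$ exceeds a threshold $\theta_n\to 0$ chosen to decay more slowly than the errors of the previous paragraph. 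In the regime $\tilde m(R)\ge\theta_n$, one has $m_n(R)\ge\theta_n/2$ eventually, and the right-hand side is $o(1)$ provided $\theta_n$ is tuned against $r_n/\rho_n^2$ and against the deviation tolerances $\varepsilon_n$ (this is where the full strength of $r_n=o(\rho_n^\alpha)$ for every $\alpha$ is consumed). In the regime $\tilde m(R)<\theta_n$, the $M_{r_n}$-mass of $R$ (say) is negligible, forcing $\mu_n(R)$ to also be of order $\theta_n+o(1)$; one then shows that $h_n(R)$ diverges at the same rate as $h(R;M_{r_n})$, so that the difference stays bounded below. This matching of the scales of $m_n$, $\tilde m$, $\tilde\nu$, and the deviation tolerances is the technical crux of the argument.
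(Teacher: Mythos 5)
Your bookkeeping in the first two paragraphs matches the paper's: uniform deviation control from Propositions~\ref{prop:vol} and~\ref{prop:perim}, the tube bound for the volume bias, and Lemma~\ref{lem:eper}$(ii)$ combined with Lemma~\ref{lem:borneperi} to get a one-sided perimeter bias of order $r_n/\rho_n^2$; your ``large $\tilde m$'' regime is then essentially the paper's argument (the paper works on the event $\Omega_n$ on which $\lambda_n(R)\asymp\lambda_n^*(R)$ and uses a deterministic lower bound on $\lambda_n^*$). The gap is your other regime. You assert that when $\tilde m(R)<\theta_n$ the difference ``stays bounded below'' because $h_n(R)$ ``diverges at the same rate as $h(R;M_{r_n})$'', but none of the tools you invoke can deliver this: all your uniform controls are additive ($|\mu_n(R)-\mu(R)|\le\varepsilon_n$, $|\mu(R)-\tilde\mu(R)|\le Cr_n$, $\nu_n(R)\ge\tilde\nu(R)-\delta_n$), whereas matching the divergence of $h(R;M_{r_n})=\tilde\nu(R)/\tilde m(R)$ as $\tilde m(R)\to0$ would require multiplicative control of an arbitrarily small denominator. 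And the regime genuinely misbehaves: take $R\in\cR_n$ a ball of radius $\rho_n$ straddling $\partial M$ so that $R\cap M_{r_n}$ is a spherical cap of height $t$, positioned so that $R$ still contains a pair of sample points of the collar $M\setminus M_{r_n}$ (so $h_n(R)$ is finite). For a fixed sample, letting $t\downarrow 0$ at scales below the sample resolution leaves $\mu_n(R)$, $\nu_n(R)$, hence $h_n(R)$, unchanged, while $h(R;M_{r_n})\asymp 1/t\to\infty$; thus $h_n(R)-h(R;M_{r_n})$ can be arbitrarily negative on this part of the class. So the small-$\tilde m$ regime cannot be ``controlled''; it has to be excluded, and nothing in your proposal excludes it.

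The missing ingredient is precisely the paper's inequality \eqref{eq:muR_n}: for the sets over which the infimum is effectively taken, both $R$ and $R^c$ contain a ball of radius $\rho_n$ centered at a point of $M$ (this is what the reach constraint, together with the ball condition in the definition of $h_n^\ddag$, buys), and Lemma~\ref{lem:ball-vol} then gives $\min\{\vol_d(R\cap M_{r_n}),\vol_d(R^c\cap M_{r_n})\}\ge C\rho_n^d$ uniformly. This lower bound makes your small-$\tilde m$ regime vacuous and fixes the scale of your threshold at $\theta_n\asymp\rho_n^d$: the denominators are bounded below by explicit powers of $\rho_n$, one runs Propositions~\ref{prop:vol} and~\ref{prop:perim} at tolerances proportional to $\eps\rho_n^{2d+1}$ and $\eps\rho_n^{d}$ respectively, and the hypotheses $r_n=o(\rho_n^\alpha)$ and $nr_n^{2d+1}\rho_n^\alpha\to\infty$ supply the preconditions and the summability needed for Borel--Cantelli, exactly as in the paper. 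Without \eqref{eq:muR_n} (or an equivalent uniform lower bound on $\tilde m$ over the class), your case analysis does not close.
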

\begin{proof} 
Take $R\in\cR_n$ and define 
$$\lambda_n(R) = \min(\mu_n(R), \mu_n(R^c)), \quad \lambda_n^*(R) = \frac1{\tau_M} \min(\vol_d(R \cap M_{r_n}), \vol_d(R^c \cap M_{r_n})),$$
as well as 
$$\nu_n^*(R) = \frac1{\tau_M} \vol_{d-1}(\partial R \cap M_{r_n}).$$  
Then
\begin{eqnarray*}
h_n(R) - h(R; M_{r_n}) 
&=& \frac{1}{\lambda_n(R)} (\nu_n(R) - \nu_n^*(R)) + \frac{\nu_n^*(R)}{\lambda_n(R)\lambda_n^*(R)} (\lambda_n^*(R) - \lambda_n(R)) \\
& =: & \zeta_n(R) + \xi_n(R).
\end{eqnarray*}
Define the event
\[
\Omega_n = \left\{\frac{1}{2} \leq \frac{\lambda_n(R)}{\lambda_n^*(R)} \leq
\frac{3}{2}, \forall R \in \cR_n\right\}.
\]
We will see that $\pr{\Omega_n} \to 1$.

\paragraph{Bounding $\zeta_n(R)$.} By definition of $\cR_n$, the sets $R$ and $R^c$ contain each a ball of radius $\rho_n$, and by Lemma~\ref{lem:ball-vol}, the volume of the intersection of this ball with $M_{r_n}$ is bounded from below by $C_1 \rho_n^d$, for a constant $C_1$ depending only on $M$.
Hence, 
\begin{equation}
\label{eq:muR_n}
\lambda_n^*(R) \geq C_1\rho_n^d.
\end{equation}
Also, on $\Omega_n$, $\lambda_n(R) \geq \lambda_n^*(R)/2$.  These last two inequalities being valid for all $R \in \cR_n$, for $\eps > 0$ we have 
\begin{eqnarray*}
I_1 := \pr{\left[\inf_{R \in \cR_n} \zeta_n(R) < -\eps\right] \cap \Omega_n}
& \leq & \mathbb{P}\left[ \inf_{R\in\cR_n}\left(\nu_n(R)-\nu_n^*(R)\right) < -C_2 \eps\rho_n^d\right]\\
& \leq & \mathbb{P}\left[ \inf_{R\in\cR_n}\big(\nu_n(R)-\mathbb{E}[\nu_n(R)]\big) + \inf_{R\in\cR_n}\big( \mathbb{E}[\nu_n(R)] - \nu_n^*(R)\big) < -C_2 \eps\rho_n^d\right],
\end{eqnarray*}
for a constant $C_2=C_1/2 > 0$.
Using the bias bounds of Lemma~\ref{lem:eper} together with the perimeter bound in Lemma~\ref{lem:borneperi}(ii), we have
$$ \inf_{R\in\cR_n}\big( \mathbb{E}[\nu_n(R)] - \nu_n^*(R)\big) \geq -C_3 \frac{r_n}{\rho_n^2}.$$
Hence, since $r_n = o(\rho_n^\alpha)$ for any $\alpha > 0$, for $\eps$ fixed and $n$ large enough, we have
by assumption, for all $n$ large enough,
\begin{equation*}
I_1 \leq \mathbb{P}\left[ \inf_{R\in\cR_n}\left(\nu_n(R)-\mathbb{E}[\nu_n(R)]\right)   < -C_2\eps\rho_n^d/2 \right]
\leq  \mathbb{P}\left[\sup_{R\in\cR_{\rho_n}}\left|\nu_n(R)-\mathbb{E}[\nu_n(R)]\right| > C_2\eps\rho_n^d/2 \right],
\end{equation*}
where the second inequality comes from the fact that $\cR_n \subset \cR_{\rho_n}$.  By the fact that $n r_n^{2d+1} \rho_n^\alpha \to \infty$ for any $\alpha > 0$, the conditions of Proposition~\ref{prop:perim} are satisfied, so that 
$$I_1 \leq C_4\exp\left(-\frac{nr_n^{d+1}\rho_n^{2d+1}\varepsilon^2}{C_4 (1+\eps)}\right),$$
for some constant $C_4>0$ and all $n$ large enough.
At last, we have
$$\frac{nr_n^{d+1}\rho_n^{2d+1}}{\log(n)} = n r_n^{2d+1} \frac{\rho_n^{2d+1} r_n^{-d}}{\log(n)} \to +\infty,$$
since $r_n = o(\rho_n^\alpha)$ for any $\alpha > 0$ and $r_n \to 0$ polynomially in $n$, we deduce that, for all $\eps>0$,
\begin{equation}
\label{eq:I1}
\sum_n \mathbb{P}\left[ \left[\inf_{R\in\cR_n} \zeta_n(R) < - \eps\right] \cap \Omega_n \right] < \infty.
\end{equation}

\paragraph{Bounding $\xi_n(R)$.}  (We reset the constants, except for $C_1$.)  By the perimeter bound of Lemma~\ref{lem:borneperi}, we have 
$$\nu_n^*(R) \leq \frac{\vol_{d-1}(\partial R)}{\tau_M} \leq d \frac{\vol_{d}(R)}{\tau_M \rho_n} = C_2/\rho_n,$$
for a constant $C_2  > 0$ depending only on $M$.
So, together with \eqref{eq:muR_n} and the fact that, on $\Omega_n$, $\lambda_n(R) \geq \lambda_n^*(R)/2$, 
$$\frac{\nu_n^*(R)}{\lambda_n(R)\lambda_n^*(R)} \leq C_3 \rho_n^{-2d-1},$$ 
for all $R$ in $\cR_n$.  It follows that
\begin{equation}
\label{eq:I2-0}
I_2 := \pr{\left[\inf_{R \in \cR_n} \xi_n(R) < -\eps\right] \cap \Omega_n} \leq \mathbb{P}\left( \sup_{R\in\cR_n} \left| \lambda_n(R)-\lambda_n^*(R)\right| > \frac{\rho_n^{2d+1}\eps}{C_3}   \right).
\end{equation}
Define
$$\mu_n^*(R) = \frac{\vol_d(R \cap M_{r_n})}{\tau_M}.$$
Then 
\begin{eqnarray*}
\left| \lambda_n(R)-\lambda_n^*(R)\right| 
&\leq& \left| \mu_n(R)-\mu_n^*(R)\right| + \left| \mu_n(R^c)-\mu_n^*(R^c)\right| \\
&\leq& \left| \mu_n(R)-\mu(R)\right| + \left| \mu_n(R^c)-\mu(R^c)\right| + 2 \mu(M_{r_n}^c),
\end{eqnarray*}
with $\mu(M_{r_n}^c) \leq C_4 r_n$ by \eqref{tube}.
For $\eps > 0$ fixed and $n$ large enough, $2 C_4 r_n \leq \rho_n^{2d+1} \eps/C_3$, again by the fact that $\rho_n \to 0$ sub-polynomially in $r_n$.  
We therefore obtain that
\begin{equation*}
I_2 \leq 2 \mathbb{P}\left(\sup_{R\in\cR_{\rho_n}}\left|\mu_n(R) - \mu(R)\right| > \frac{\rho_n^{2d+1}\eps}{4 C_3}   \right),
\end{equation*}
where we used the fact that $R^c \in \cR_n$ when $R \in \cR_n$, together with $\cR_n \subset \cR_{\rho_n}$.  We then apply Proposition~\ref{prop:vol}, whose conditions are satisfied for $\eps > 0$ fixed and $n$ large enough, again because $\rho_n \to 0$ very slowly, arriving at
$$I_2 \leq C_4\exp\left(-\frac{n r_n^d \rho_n^{4d+2} \eps^2}{C_4 (1 +\eps)}\right),$$
for some constant $C_4>0$ and all $n$ large enough.
As before, when $\eps$ is fixed, the exponent is a positive power of $n$, so that 
\begin{equation}
\label{eq:I2}
\sum_n \mathbb{P}\left[ \left[\inf_{R\in\cR_n}  \xi_n(R) < - \eps\right] \cap \Omega_n \right] < \infty.
\end{equation}

\paragraph{Bounding $\pr{\Omega_n^c}$.} Since $\lambda_n^*(R)>C\rho_n^d$ for some $C$ uniformly over $R\in\cR_n$ (see \eqref{eq:muR_n} above), we have
\begin{align*}
\mathbb{P}\left(\Omega_n^c\right) & =  \mathbb{P}\left( \sup_{R\in\cR_n} \frac{|\lambda_n(R) - \lambda_n^*(R)|}{\lambda_n^*(R)} > \frac{1}{2}\right)\\
& \leq  \mathbb{P}\left( \sup_{R\in\cR_n} \left|\lambda_n(R) - \lambda_n^*(R)\right| > C\rho_n^d\right).
\end{align*}
We then proceed as in bounding \eqref{eq:I2-0}, obtaining 
\begin{equation}
\label{eq:I3}
\sum_n\mathbb{P}\left(\Omega_n^c\right) < \infty.
\end{equation}

\paragraph{Conclusion.}
We have 
\[ 
\mathbb{P}\left[\inf_{R\in\cR_n} \left( h_n(R) -  h(R; M_{r_n}) \right) < -2\varepsilon\right]
\leq
\mathbb{P}\left[ \left[\inf_{R\in\cR_n} \zeta_n(R) < - \eps\right] \cap \Omega_n \right]
+ \mathbb{P}\left[ \left[\inf_{R\in\cR_n}  \xi_n(R) < - \eps\right] \cap \Omega_n \right] 
+ \pr{\Omega_n^c},
\]
so that the left-hand side is summable.  Therefore, we conclude by applying the Borel-Cantelli lemma.
\end{proof}

\subsection{Some continuity of the Cheeger constant}
\label{sub:cont_Cheeger}

Our proof of Theorem~\ref{theo:estim} relies on continuity properties of the normalized cut and of the Cheeger constant. 
Lemma~\ref{lem:lip-cheeger} below compares the conductance function on $M$ and on a bi-Lipschitz deformation of $M$. 
For a Lipschitz map $f$, let $\|f\|_{\rm Lip}$ denote its Lipschitz constant.
If $f$ is bi-Lipschitz, we define its condition number by ${\rm cond}(f) := \|f\|_{\rm Lip} \, \|f^{-1}\|_{\rm Lip}$.
Lemma~\ref{lem:near-iso} below states that $M_r$ is a bi-Lipschitz deformation of $M$, hence Lemma~\ref{lem:lip-cheeger} yields the continuity property of Proposition~\ref{prop:cheeger-cont}.

\begin{lemma} \label{lem:lip-cheeger}
Let $f$ be a bi-Lipschitz on $\dM$.  Then for any $A \subset \dM$ measurable,
$$
\max\left\{\frac{h(f(A); f(\dM))}{h(A; \dM)}, \frac{h(A; \dM)}{h(f(A); f(\dM))}\right\} \leq {\rm cond}(f)^d.
$$
\end{lemma}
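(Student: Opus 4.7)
The plan is to compare the numerator and denominator of the normalized cut directly, invoking the classical distortion estimate for Hausdorff measure under Lipschitz maps: if $g$ is Lipschitz on some set $E \subset \bbR^d$ and $S \subset E$ is Borel, then
\[
\vol_k(g(S)) \leq \|g\|_{\rm Lip}^k \, \vol_k(S), \quad k = 1, \dots, d.
\]
This is standard (it follows from the definition of Hausdorff measure, since a $\delta$-cover of $S$ maps to an $(\|g\|_{\rm Lip}\delta)$-cover of $g(S)$). Set $L = \|f\|_{\rm Lip}$, $L' = \|f^{-1}\|_{\rm Lip}$, so that ${\rm cond}(f) = LL'$. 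Since $f$ is a bi-Lipschitz homeomorphism onto its image, it sends interior to interior and boundary to boundary, so $\partial f(A) \cap f(M) = f(\partial A \cap M)$ and $f(A)^c \cap f(M) = f(A^c \cap M)$; this lets me rewrite every set appearing in $h(f(A); f(M))$ as the $f$-image of the corresponding set appearing in $h(A; M)$.

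With these identifications in hand, I would bound the numerator and denominator separately. Applying the distortion bound to $f$ with $k = d-1$ gives
\[
\vol_{d-1}(\partial f(A) \cap f(M)) = \vol_{d-1}(f(\partial A \cap M)) \leq L^{d-1} \vol_{d-1}(\partial A \cap M).
\]
For the volume in the denominator, apply the distortion bound to $f^{-1}$ with $k = d$: for any Borel $S \subset M$,
\[
\vol_d(S) = \vol_d(f^{-1}(f(S))) \leq L'^d \, \vol_d(f(S)),
\]
so $\vol_d(f(S)) \geq L'^{-d} \vol_d(S)$. Applying this to $S = A \cap M$ and $S = A^c \cap M$, I get
\[
\min\{\vol_d(f(A) \cap f(M)), \vol_d(f(A)^c \cap f(M))\} \geq L'^{-d} \min\{\vol_d(A \cap M), \vol_d(A^c \cap M)\}.
\]

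Combining the two bounds yields
\[
h(f(A); f(M)) \leq L^{d-1} L'^d \, h(A; M) \leq (LL')^d \, h(A;M) = {\rm cond}(f)^d \, h(A;M),
\]
which is one of the two inequalities. The reverse inequality follows by symmetry: apply the same argument to the bi-Lipschitz map $f^{-1}: f(M) \to M$, whose condition number equals ${\rm cond}(f)$, with $f^{-1}(f(A)) = A$ in place of $A$.

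The main obstacle is not analytic but book-keeping: one has to verify that $\partial f(A) \cap f(M) = f(\partial A \cap M)$ at the level of $(d-1)$-dimensional Hausdorff measure (so that boundary pieces lying on $\partial M$ are correctly excluded on both sides), and that a bi-Lipschitz map really does transform $\vol_{d-1}$ as claimed even when the sets are not smooth a priori -- both are classical consequences of Federer/Rademacher theory but deserve a word. Once these points are granted, the computation itself is the routine combination of the two distortion estimates above.
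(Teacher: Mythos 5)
Your proof is essentially identical to the paper's: the same Hausdorff-measure distortion bound, the same identifications $\partial f(A)\cap f(M)=f(\partial A\cap M)$ and $f(A)^c\cap f(M)=f(A^c\cap M)$, the same $\|f\|_{\rm Lip}^{d-1}$ bound on the numerator and $\|f^{-1}\|_{\rm Lip}^{-d}$ bound on the denominator, and the same appeal to symmetry for the reverse inequality. One small remark that applies equally to your write-up and the paper's: the final step $\|f\|_{\rm Lip}^{d-1}\|f^{-1}\|_{\rm Lip}^{d}\le{\rm cond}(f)^d$ tacitly uses $\|f\|_{\rm Lip}\ge 1$ (a pure contraction $f(x)=x/2$ violates the stated bound), but this is harmless here because the maps of Lemma~\ref{lem:near-iso} fix $M_s$ pointwise and hence have both Lipschitz constants at least $1$.
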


\begin{proof}
For any $A \subset \dM$, $\partial f(A) = f(\partial A)$ and $f(A)^c \cap f(\dM) = f(A^c \cap \dM)$, and if $A$ is measurable, for $k = 1, \dots, d$, 
\begin{equation} \notag 
\|f^{-1}\|_{\rm Lip}^{-k} \, \vol_k(A) \leq \vol_k(f(A)) \leq \|f\|_{\rm Lip}^{k} \, \vol_k(A).
\end{equation}
Therefore, 
\begin{eqnarray*}
h(f(A); f(\dM)) 
&=& \frac{\vol_{d-1}(f(\partial A \cap \dM))}{\min\{\vol_d(f(A)), \vol_d(f(A^c \cap \dM))\}} \\
&\leq& \frac{\|f\|_{\rm Lip}^{d-1} \vol_{d-1}(\partial A \cap \dM)}{\|f^{-1}\|_{\rm Lip}^{-d} \min\{\vol_d(A), \vol_d(A^c \cap \dM)\}} \\
&\leq& {\rm cond}(f)^d \, h(A; \dM).
\end{eqnarray*}
And vice-versa.
\end{proof}

\begin{lemma} \label{lem:near-iso}
Fix $r<s\le \operatorname{reach}(\partial M)$.
Then there is a bi-Lipschitz map between $\dM_r$ and $\dM$ that leaves $\dM_s$ unchanged, and with condition number at most $(1 + 2r/(s-r))^2$.  
\end{lemma}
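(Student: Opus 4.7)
The plan is to realize $f$ as a radial rescaling in a collar neighborhood of $\partial M$ and the identity elsewhere. Since $s \leq \operatorname{reach}(\partial M)$, the tubular parameterization $\Phi(p, t) = p + t\, e_p$ is a diffeomorphism from $\partial M \times [0, s)$ onto $M \setminus M_s$, where $e_p$ denotes the inward unit normal at $p \in \partial M$. I would define the affine reparameterization $\varphi: [r, s] \to [0, s]$ by $\varphi(t) = s(t-r)/(s-r)$, so that $\varphi(r) = 0$, $\varphi(s) = s$, $\varphi'(t) = s/(s-r)$, and $\varphi^{-1}(u) = r + u(s-r)/s$, and then set
\[
 f(x) = \begin{cases} x, & x \in M_s, \\ p + \varphi(t)\, e_p, & x = p + t\, e_p \in M_r \setminus M_s. \end{cases}
\]
By construction $f$ is continuous across the interface $\Phi(\partial M \times \{s\})$ since $\varphi(s) = s$, it is a bijection from $M_r$ onto $M$ with inverse given by the analogous formula in terms of $\varphi^{-1}$, and it is the identity on $M_s$ as required.

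To bound the Lipschitz constants, I would compute the differential in the tubular chart: a tangent vector at $x = \Phi(p, t) \in M_r \setminus M_s$ has the form $v = (I - t\, S_p)\xi + \tau\, e_p$ with $\xi \in T_p \partial M$ and $\tau \in \mathbb{R}$, where $S_p$ is the Weingarten shape operator, so
\[
 df(v) = (I - \varphi(t)\, S_p)\xi + \varphi'(t)\, \tau\, e_p.
\]
Since the principal curvatures $\kappa_i$ of $\partial M$ satisfy $|\kappa_i| \leq 1/\operatorname{reach}(\partial M) \leq 1/s$ and $0 \leq \varphi(t) \leq t \leq s$, diagonalizing $S_p$ reduces the question to bounding $(1 - \varphi(t)\kappa)/(1 - t\kappa)$ over $\kappa \in [-1/s, 1/s]$; an elementary calculation shows this ratio is monotone in $\kappa$ and attains its maximum $(s - \varphi(t))/(s - t) = s/(s-r)$ at $\kappa = 1/s$, matching $\varphi'(t)$. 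Combining both components yields $\|df\|_{\rm op} \leq s/(s-r) = 1 + r/(s-r)$ pointwise on $M_r \setminus M_s$; the parallel computation for $f^{-1}$ gives $\|df^{-1}\|_{\rm op} \leq 1 + r/s$.

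The hard part is converting these pointwise operator-norm bounds into honest Euclidean Lipschitz constants on $M_r$ and $M$, since neither set is convex and the straight segment between two points need not remain inside. I would handle this by noting that $\operatorname{reach}(\partial M_r) \geq \operatorname{reach}(\partial M) - r \geq s - r > 0$, from which standard results on sets of positive reach imply that any two points of $M_r$ can be connected by a rectifiable curve inside $M_r$ whose length exceeds their Euclidean distance by a factor at most $1 + O(r/(s-r))$; for widely separated points, one can also fall back on the crude but uniformly valid estimate $|f(x) - f(y)| \leq |x - y| + 2r$ coming from $\|f - \mathrm{id}\|_\infty \leq r$. Integrating $\|df\|_{\rm op}$ along such a curve delivers the Lipschitz bound for $f$, and symmetrically for $f^{-1}$ on $M$; multiplying the two then yields the condition number bound $(1 + 2r/(s-r))^2$ claimed in the lemma.
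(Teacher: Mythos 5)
Your map is in fact the same map as the paper's (in tubular coordinates your $p+\varphi(t)e_p$ is exactly $x+\tfrac{r(s-\delta(x))_+}{s-r}u_x$), and your pointwise differential bound $\|df\|_{\rm op}\le s/(s-r)$ via the shape operator is correct. The genuine gap is the step you yourself flag as "the hard part": passing from a pointwise operator-norm bound on the non-convex set $M_r$ to a Euclidean Lipschitz constant. The "standard result on sets of positive reach" you invoke -- that any two points of $M_r$ are joined by a curve in $M_r$ of length at most $\bigl(1+O(r/(s-r))\bigr)|x-y|$ -- is not a standard result and is false as stated: a lower bound $\operatorname{reach}(\partial M_r)\ge s-r$ controls the intrinsic-to-Euclidean distortion only at the scale of $|x-y|/(s-r)$, not $r/(s-r)$. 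For instance, two points of $M_r$ lying on either side of a rounded indentation of $M_r^c$ of curvature radius about $s-r$, at Euclidean distance comparable to $s-r$, have intrinsic distance larger by a constant factor (up to about $\pi/2$), independently of how small $r$ is. Your fallback estimate $|f(x)-f(y)|\le |x-y|+2r$ only yields the factor $1+2r/(s-r)$ when $|x-y|\ge s-r$, so patching the two regimes together does not give the claimed constant: optimizing the crossover with the best local distortion estimates one can actually prove yields something like $1+O\bigl((r/(s-r))^{\beta}\bigr)$ with $\beta<1$, not $1+O(r/(s-r))$, and in any case the specific bound $(1+2r/(s-r))^2$ of the lemma is not established by the argument as written. (The weaker rate would still suffice for the later uses of the lemma, but that is a different statement, and the key distance-comparison claim would still need a proof.)

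The paper avoids this issue entirely by never differentiating: it writes $f_r(x)=x+\tfrac{r}{s-r}\,(s-\delta(x))_+\,u_x$ and estimates the Lipschitz constant algebraically, using that the distance function $\delta(\cdot)=\dist(\cdot,\partial M)$ is globally $1$-Lipschitz and that $x\mapsto u_x$ is Lipschitz on the tube of radius $s$ (Federer), so that the correction term is a product of bounded Lipschitz functions; such an estimate holds for arbitrary pairs of points and requires no curve joining them inside the domain. If you want to salvage your differential-geometric route, you must either prove a genuine chord-arc (intrinsic versus Euclidean distance) inequality for $M_r$ with the constant you need, or abandon path integration in favor of a direct two-point estimate of the type the paper uses.
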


\begin{proof}
For $x$ in $\dM$ such that $\delta(x) := {\rm dist}(x, \partial\dM) < s$, let $\xi(x) \in \dM$ be its metric projection onto $\partial \dM$ and $u_x$ be the unit normal vector of $\dM$ at $\xi(x)$ pointing outwards.  
We define the map 
$$
f_r: \dM_r \mapsto \dM, \quad f_r(x) = x + \frac{r (s -\delta(x))_+}{s-r} \, u_x,
$$
where $a_+$ denotes the positive part of $a \in \bbR$.  By construction, $f$ is one-to-one, with inverse
$$
f_r^{-1}: \dM \mapsto \dM_r, \quad f_r^{-1}(x) = x - \frac{r (s -\delta(x))_+}{s} \, u_x.
$$
By~\cite[Thm.~4.8(1)]{MR0110078}, $\delta$ is Lipschitz with constant at most 1, therefore so is $x \mapsto (s -\delta(x))_+$; and since the reach bounds the radius of curvature from below~\cite[Thm.~4.18]{MR0110078}, $x \mapsto u_x$ is Lipschitz with constant at most $1/\operatorname{reach}(\partial M)$.  Therefore, using the fact that $(s -\delta(x))_+ \leq s$ and $\|u_x\| = 1$, $f_r$ and $f_r^{-1}$ are Lipschitz with constants at most $1 + 2r/(s-r)$ and $1 + 2r/s$ respectively. 
\end{proof}

\begin{proposition} \label{prop:cheeger-cont}
We have
$$
H(\dM_r) = (1 + O(r)) \, H(\dM), \quad r \to 0.
$$
\end{proposition}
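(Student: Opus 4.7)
The plan is to combine Lemmas~\ref{lem:lip-cheeger} and~\ref{lem:near-iso} in a direct way: a bi-Lipschitz identification of $\dM_r$ with $\dM$ having condition number $1 + O(r)$ will push forward the infimum defining $H(\dM_r)$ onto the infimum defining $H(\dM)$, up to a factor $1 + O(r)$, and vice-versa.

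Fix once and for all $s := \operatorname{reach}(\partial \dM)/2$ and restrict attention to $r < s$. Lemma~\ref{lem:near-iso} then produces a bi-Lipschitz map $f_r: \dM_r \to \dM$ with
\[
\operatorname{cond}(f_r) \leq \left(1 + \frac{2r}{s-r}\right)^2 = 1 + O(r), \quad r \to 0,
\]
so that $\operatorname{cond}(f_r)^d = 1 + O(r)$ as well. Applied to $f_r: \dM_r \to \dM$, Lemma~\ref{lem:lip-cheeger} yields, for any measurable $A \subset \dM_r$,
\[
h(f_r(A); \dM) \leq \operatorname{cond}(f_r)^d \, h(A; \dM_r),
\]
and conversely for any measurable $B \subset \dM$, taking $A = f_r^{-1}(B) \subset \dM_r$,
\[
h(f_r^{-1}(B); \dM_r) \leq \operatorname{cond}(f_r)^d \, h(B; \dM).
\]

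Taking infima in each inequality and using the bijectivity of $f_r$ gives
\[
H(\dM) \leq \operatorname{cond}(f_r)^d \, H(\dM_r), \qquad H(\dM_r) \leq \operatorname{cond}(f_r)^d \, H(\dM),
\]
i.e.
\[
\frac{1}{\operatorname{cond}(f_r)^d} \, H(\dM) \leq H(\dM_r) \leq \operatorname{cond}(f_r)^d \, H(\dM).
\]
Since $\operatorname{cond}(f_r)^d = 1 + O(r)$, both bounds read $H(\dM_r) = (1 + O(r)) H(\dM)$ as $r \to 0$, as desired.

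The only point that requires a little care is a bookkeeping issue rather than a genuine obstacle: the Cheeger constant is defined as an infimum over open subsets (or equivalently smoothly bounded ones), whereas Lemma~\ref{lem:lip-cheeger} is stated for measurable subsets. The bi-Lipschitz homeomorphism $f_r$ carries open subsets of $\dM_r$ to open subsets of $\dM$, and moreover since $f_r$ is the identity outside the tube of width $s$ around $\partial \dM$ (up to finite boundary effects) the image/preimage of a smoothly bounded subset remains smoothly bounded away from $\partial \dM$, so the infima coincide with those over the smooth class and no density/regularity argument is needed.
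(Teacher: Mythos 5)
Your proposal is correct and follows essentially the same route as the paper: the paper's proof likewise combines Lemma~\ref{lem:lip-cheeger} with the bi-Lipschitz map of Lemma~\ref{lem:near-iso} to get $\max\{H(\dM_r)/H(\dM),\, H(\dM)/H(\dM_r)\} \leq (1+2r/(\rho_\dM-r))^{2d} = 1+O(r)$, the only cosmetic difference being your choice $s=\operatorname{reach}(\partial\dM)/2$ instead of $s=\operatorname{reach}(\partial\dM)$. Your closing remark on open versus measurable sets is a harmless extra; since $f_r$ is a homeomorphism, the infima over open subsets are carried over exactly as you say.
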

\begin{proof}
From Lemmas~\ref{lem:lip-cheeger} and~\ref{lem:near-iso}, we deduce that
$$
\max\left\{\frac{H(\dM_r)}{H(\dM)}, \frac{H(\dM)}{H(\dM_r)}\right\} \leq (1 + 2r/(\rho_\dM-r))^{2d},
$$
for any $r < \rho_\dM := {\rm reach}(\partial \dM)$, which immediately yields the desired result. 
\end{proof}


\subsection{$L^1$-metric on Borel sets}
\label{sub:l1}

We will use the $L^1$-metric on Borel subsets of $\bbR^d$, defined by $\vol_d(A \Delta B) = \int \left|{\bf 1}_A(x) -{\bf 1}_B(x)\right| \, {\rm d}x$. 
This metric comes from the bijection between Borel sets $A$ and their indicator functions $\mathbf 1_A$, endowed with the $L^1$-topology.
Strictly speaking, this is a semi-metric on Borel subsets of $\bbR^d$ since
$\vol_d(A\Delta B) = 0$ if and only if $A\Delta B$ is a null set. 

The following propositions are adapted from~\cite[Thm.~2.3.10]{henrot-pierre} and~\cite[Prop.~2.3.6]{henrot-pierre} respectively. 
Proposition~\ref{prop:mp2} is a compactness criterion, and Proposition~\ref{prop:mp1} results from lower semi-continuity of the perimeter measure with respect to $L^1$-metric.

\begin{proposition}
\label{prop:mp2}
Let $(E_n)$ be a sequence of measurable subsets of $\dM$.
Suppose that
$$\limsup_{n \to \infty} \vol_{d-1}(\partial E_n \cap \dM) < \infty.$$
Then $(E_n)$ admits a subsequence converging for the $L^1$-metric.
\end{proposition}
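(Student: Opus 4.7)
The plan is to reduce the statement to the classical compactness theorem for functions of bounded variation, which in the form used by Henrot--Pierre (and going back to De Giorgi) says that a family of indicator functions with uniformly bounded $BV$-norm on a bounded Lipschitz domain is relatively compact in $L^1$.

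First, I identify each $E_n$ with its indicator function $\mathbf{1}_{E_n}$ and view it as an element of $BV(\Omega)$, where $\Omega$ is an open bounded neighborhood of $\overline{M}$ with Lipschitz (in fact smooth) boundary. Because $M$ is bounded we automatically have $\|\mathbf{1}_{E_n}\|_{L^1(\Omega)} \leq \vol_d(M) < \infty$, uniformly in $n$. For the total-variation seminorm, I use that the hypothesis $\limsup_n \vol_{d-1}(\partial E_n \cap M) < \infty$ combined with the fact that $\partial M$ itself has finite $(d-1)$-Hausdorff measure (since $M$ has smooth boundary) yields a uniform upper bound on $|D\mathbf{1}_{E_n}|(\Omega)$; the extension across $\partial M$ contributes a bounded term depending only on $M$. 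Thus $\{\mathbf{1}_{E_n}\}$ is bounded in $BV(\Omega)$.

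Next, I invoke the compact embedding $BV(\Omega) \hookrightarrow L^1(\Omega)$, valid because $\Omega$ is bounded with Lipschitz boundary. This produces a subsequence $\mathbf{1}_{E_{n_k}}$ converging in $L^1(\Omega)$ to some $f$. Passing to a further subsequence, $\mathbf{1}_{E_{n_k}} \to f$ pointwise almost everywhere, and since each $\mathbf{1}_{E_{n_k}}$ takes only values $0$ and $1$, the limit $f$ is almost everywhere $\{0,1\}$-valued. Setting $E := \{f = 1\}$ (a Borel set up to a null set), one obtains $\vol_d(E_{n_k} \Delta E) = \|\mathbf{1}_{E_{n_k}} - \mathbf{1}_E\|_{L^1} \to 0$, which is precisely convergence in the $L^1$-metric on Borel subsets of $M$.

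The main obstacle, and the only point where some care is required, is the relationship between the ``relative perimeter'' $\vol_{d-1}(\partial E_n \cap M)$ appearing in the hypothesis and the distributional total variation $|D\mathbf{1}_{E_n}|$ used in the $BV$-compactness theorem. For sets with smooth boundary the two coincide inside $M$; in general one must recall that $|D\mathbf{1}_{E}|(M)$ coincides with $\vol_{d-1}(\partial^* E \cap M)$ (the reduced boundary) by De Giorgi's structure theorem, and this equals $\vol_{d-1}(\partial E \cap M)$ for the sets considered here. Once this identification is made and the boundary contribution from $\partial M$ is absorbed into a universal additive constant, the $BV$-compactness theorem applies directly, finishing the proof.
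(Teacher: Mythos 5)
Your proposal is correct and takes essentially the same route as the paper, which proves nothing here beyond invoking the compactness theorem for sets of finite perimeter (Henrot--Pierre, Thm.~2.3.10), i.e.\ exactly the $BV\hookrightarrow L^1$ compactness argument you spell out (extend by zero to a bounded Lipschitz neighborhood, uniform $BV$ bound, extract an $L^1$-convergent subsequence whose limit is an indicator). One small correction: for arbitrary measurable $E_n$ the distributional perimeter need not \emph{equal} $\vol_{d-1}(\partial E_n\cap M)$; what your argument actually requires (and what holds, by Federer's criterion together with $\partial^* E_n\subset\partial E_n$) is only the inequality $|D\mathbf{1}_{E_n}|(M)\le \vol_{d-1}(\partial E_n\cap M)$, which is all that is needed for the uniform $BV$ bound.
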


\begin{proposition} \label{prop:mp1}
Let $E_n$ and $E$ be bounded measurable subsets of $\dM$ such that ${E_n} \to E$ in $L^1$, and $h(E;M) < \infty$.  Then
$$\liminf_n h(E_n;M) \geq h(E;M).$$
\end{proposition}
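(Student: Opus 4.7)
The plan is to decouple the numerator and denominator of $h(\cdot;M)$ and treat them separately, using lower semi-continuity for the perimeter and ordinary convergence for the volumes. Write
\[
h(E_n;M) = \frac{\vol_{d-1}(\partial E_n\cap M)}{\min\{\vol_d(E_n\cap M),\,\vol_d(E_n^c\cap M)\}}
\]
and analogously for $E$.

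First I would dispose of the denominator. Since $E_n \to E$ in $L^1$, by the triangle inequality $\vol_d((E_n\cap M)\Delta(E\cap M))\le \vol_d(E_n\Delta E)\to 0$, so $\vol_d(E_n\cap M)\to \vol_d(E\cap M)$; applying the same to the complements yields $\vol_d(E_n^c\cap M)\to \vol_d(E^c\cap M)$, hence
\[
\min\{\vol_d(E_n\cap M),\vol_d(E_n^c\cap M)\}\longrightarrow \min\{\vol_d(E\cap M),\vol_d(E^c\cap M)\}.
\]
The hypothesis $h(E;M)<\infty$ forces the limiting denominator to be strictly positive, so for all $n$ large enough the denominators for $E_n$ are bounded below by (half of) that positive limit; in particular the sequence of reciprocals is eventually bounded and converges to the reciprocal of the limit.

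Next I would invoke the lower semi-continuity of the perimeter (De~Giorgi's theorem, which is the content of~\cite[Prop.~2.3.6]{henrot-pierre} applied with the open set $M$): from $\mathbf 1_{E_n}\to\mathbf 1_E$ in $L^1(M)$ one gets
\[
\liminf_{n\to\infty}\vol_{d-1}(\partial E_n\cap M)\ \ge\ \vol_{d-1}(\partial E\cap M).
\]
Combining this $\liminf$ on the numerators with the convergence of the (reciprocal of the) denominators, and noting that the product of a nonnegative sequence with a convergent positive sequence satisfies $\liminf(a_n b_n)\ge (\liminf a_n)\cdot(\lim b_n)$, yields $\liminf_n h(E_n;M)\ge h(E;M)$, as required.

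The only substantive step is the semi-continuity of the perimeter, which is not established in the paper but is quoted directly from Henrot--Pierre; everything else is bookkeeping with the hypothesis $h(E;M)<\infty$ to ensure the denominators stay away from zero. I do not anticipate a genuine obstacle beyond carefully splitting the $\liminf$ of the quotient into the (semi-continuous) numerator and the (continuous, positive) denominator.
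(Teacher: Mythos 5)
Your argument is correct and matches the paper's intended route: the paper gives no detailed proof, stating only that the result ``results from lower semi-continuity of the perimeter measure with respect to the $L^1$-metric'' (citing Henrot--Pierre), which is exactly your key step. Your explicit bookkeeping --- convergence of the volumes, positivity of the limiting denominator forced by $h(E;M)<\infty$, and the $\liminf$ of the quotient --- is a faithful fleshing-out of that same approach.
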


\subsection{Proof of $(i)$ in Theorem~\ref{theo:estim}}
\label{sub:proof_i}

\paragraph{Lower bound.}
For each $n$, let $R_n \in \cR_n$ be such that
$$h_n^\ddag(R_n) = \min_{R\in\cR_n}h_n^\ddag(R).$$
Then
\begin{eqnarray*}
h_n^\ddag(R_n) - H(M) & = & \left[h_n^\ddag(R_n) - h(R_n; M_{r_n})\right] + \left[h(R_n; M_{r_n}) -H(M_{r_n})\right] + \left[H(M_{r_n}) -H(M)\right] \\
 & \geq & \inf_{R \in\cR_n} \left( h_n(R) -  h(R; M_{r_n}) \right) +  \left[H(M_{r_n}) -H(M)\right],
\end{eqnarray*}
since $\left[h(R_n; M_{r_n}) -H(M_{r_n})\right] \geq 0$ by definition of $H(M_{r_n})$.
On the last line, by Lemma~\ref{lem:approxh}, the first term has a non-negative inferior limit, and by Proposition~\ref{prop:cheeger-cont}, the second term tends to zero.  Hence,
\begin{equation}
\label{eq:liminfdehnddag}
\liminf_{n\to\infty}\min_{R\in\cR_n}h_n^\ddag(R) \geq H(M)\quad\text{a.s.}
\end{equation}

\paragraph{Upper bound.}
To obtain the matching upper bound, fix a subset $A\subset M$ with smooth relative boundary and such that $0<\vol_d(A)\leq \vol_d(M\backslash A)<\vol_d(M)$.
Then, for $n$ large enough, there exists $R_n$ in $\cR_n$ such that $R_n\cap M = A$, implying that
$$\min_{R\in\cR_n}h_n^\ddag(R) \leq h_n(A).$$
By Theorem~\ref{theo:pointwise}, $h_n(A) \to h(A;M)$ almost surely, so that
$$\limsup_{n\to\infty} \min_{R\in\cR_n}h_n^\ddag(R) \leq h(A;M)\quad\text{a.s.}$$
By minimizing over $A$, we obtain
\begin{equation}
\label{eq:limsupdehnddag}
\limsup_{n\to\infty}\min_{R\in\cR_n}h_n^\ddag(R) \leq H(M)\quad\text{a.s.}
\end{equation}

Combining the lower and upper bounds, \eqref{eq:liminfdehnddag} and \eqref{eq:limsupdehnddag}, we conclude that
\begin{equation}
\label{eq:ETVOILA}
\lim_{n\to\infty} \min_{R\in\cR_n}h_n^\ddag(R) = H(M)\quad\text{a.s.}
\end{equation}

\subsection{Proof of $(ii)$ in Theorem~\ref{theo:estim}}
\label{sub:proof_ii}
Let $R_n$ be a sequence in $\cR_n$ satisfying 
$$h_n^\ddag(R_n) = \min_{R\in\cR_n}h_n^\ddag(R),$$
and set $A_n = R_n \cap M$.
Fix a subset $A^0 \subset \dM$ with smooth relative boundary and such that $h(A^0) < \infty$.
Then for $n$ large enough, there exists $R$ in $\cR_n$ such that $A^0 = R\cap M$.
Hence $h_n(A_n) \leq h_n(A^0)$ and since $h_n(A^0) \to h(A^0)$ by Theorem~\ref{theo:pointwise}, we have
$$\limsup_{n \to \infty} \vol_{d-1}(A_n) \leq \limsup_{n \to \infty} h(A_n) \min\{\vol_d(A_n), \vol_d(A_n^c \cap \dM)\} \leq h(A^0) \vol_d(\dM)/2.$$ 
Therefore by compactness of the class of sets with bounded perimeters (Proposition~\ref{prop:mp2}), with probability one, $\{A_n\}$ admits a subsequence converging in the $L^1$-metric.\\

On the one hand,
$$
h(A_n; M_{r_n}) -H(M) = \left[h(A_n; M_{r_n}) - H(M_{r_n})\right] + \left[H(M_{r_n}) -H(M)\right],
$$
where the first difference term on the right-hand side is non-negative by definition, while the second difference term tends to zero by Proposition~\ref{prop:cheeger-cont}. 
So that with probability one:
$$\liminf_{n \to \infty} h(A_n; \dM_{r_n})  \geq H(M).$$
On the other hand,
\begin{eqnarray*}
h(A_n;M_{r_n}) - H(M) & = & \left[h(A_n; M_{r_n}) -h_n^\ddag(A_n)\right] + \left[h_n^\ddag(A_n) - H(M)\right]\\
 & \leq & - \inf_{R\in\cR_n} \left(h_n^\ddag(R) - h(R;M_{r_n})\right) + \left[h_n^\ddag(A_n) - H(M)\right]
\end{eqnarray*}
so that
$$\limsup_{n\to\infty} h(A_n;M_{r_n}) - H(M) \leq - \liminf_{n\to\infty}\inf_{R\in\cR_n} \left(h_n^\ddag(R) - h(R;M_{r_n})\right) + \left[h_n^\ddag(A_n) - H(M)\right]$$
which goes to 0 as $n\to\infty$ from \eqref{eq:liminfdelinf} and \eqref{eq:ETVOILA}.
Hence
$$\lim_{n\to\infty} h(A_n;M_{r_n}) \to H(M)\quad\text{a.s.}$$

Now let $f_{n}$ denote the bi-Lipschitz function mapping $\dM_{r_n}$ to $\dM$ defined in Lemma~\ref{lem:near-iso} with $r$ and $s$ replaced by $r_n$ and $s_n$, where $s_n/r_n \to \infty$.
Define $B_n = f_n(A_n\cap M_{r_n})$.
By Lemmas~\ref{lem:lip-cheeger} and~\ref{lem:near-iso}, we have
$$h(B_n;M) \leq \left(1 + \frac{2r_n}{s_n-r_n}\right)^{2d} h(A_n ; M_{r_n}),$$
so that $h(B_n;M) \to H(M)$ almost surely as $n\to\infty$.
Moreover, by Proposition~\ref{prop:mp2}, with probability one, there exists a subset $B_\infty$ of $M$ and a subsequence $\{B_{n_k}\}$ such that $B_{n_k}$ converges to $B_\infty$ in the $L^1$-metric.
Since $h(\cdot;M)$ is lower-semi-continuous by Proposition~\ref{prop:mp1}, with probability one, $\liminf_{n\to\infty} h(B_n;M) \geq h(B_\infty;M)$.
Since we also have $\liminf_{n\to\infty} h(B_n;M) = H(M)$ a.s., it follows that $h(B_\infty;M) = H(M)$ a.s. and so $B_\infty$ is a Cheeger set of $M$.

Moreover, since $f_n$ leaves $M_{s_n}$ unchanged,
$$\vol_d(A_n\Delta B_n) \leq \vol_d(M\backslash M_{s_n})\to 0\quad\text{as $n\to\infty$}.$$
Hence with probability one, $\mathbf{1}_{A_n} - \mathbf{1}_{B_n} \to 0$ in $L^1$.
Consequently, the sequences $\{A_n\}$ and $\{B_n\}$ have the same accumulation points, and so any convergent subsequence of $\{A_n\}$ converges to a Cheeger set of $M$.

\subsection{Proof of Theorem~\ref{theo:discretemeasure}}
\label{sub:proof_3}
Let $A_{n} = R_{n}\cap M$ and assume, without loss of generality, that $A_n \to A_\infty$ in $L^1$. 
For all $n\geq 1$, and all $f$ in the class of bounded and continuous functions on $\dM$, say $\cC_b(\dM)$, we have
$$
\left| Q_nf - \int_Mf(x)\mathbf{1}_{R_n}(x)\mu({\rm d}x) \right| \leq \sup_{R \in\cR_n} \left|P_n\left(f\mathbf{1}_R\right) - \mu\left(f\mathbf{1}_R\right)\right|,
$$
where $P_n$ is the empirical measure of the sample $X_1, \dots, X_n$.
Using the bound on the covering numbers in Lemma~\ref{lem:vcan}, it is a classical exercise to prove that the collection of functions $x\mapsto f(x)\mathbf{1}_R(x)$ where $R$ ranges over $\cR_n$ is a Glivenko-Cantelli class, whence
\[
\left| Q_nf - \int_Mf(x)\mathbf{1}_{R_n}(x)\mu({\rm d}x) \right| \to 0\quad\text{a.s. as $n\to\infty$}.
\]
Next,
$$\left|  \int_Mf(x)\mathbf{1}_{R_{n}}(x)\mu({\rm d}x) - Qf \right| = \left|  \int_Mf(x)\mathbf{1}_{A_{n}}(x)\mu({\rm d}x) - Qf \right| \leq \|f\|_\infty \mu\left(A_{n}\Delta A_\infty\right),$$
which tends to 0 by definition of $A_\infty$. 
Thus, we have shown that, for all $f$ in $\cC_b(M)$, $\mathbb{P}\left(Q_nf \to Qf\right) = 1$.
Using the separability of $\cC_b(M)$~\cite[p. 131]{doob}, we deduce that
$$\mathbb{P}\big[\forall f\in\cC_b(M),\, Q_nf \to Qf\big]=1,$$
so that the event ``$Q_n$ converge weakly to $Q$'' is of probability 1.











\section*{Acknowledgments}
Ery Arias-Castro was partially supported by a grant from the US National Science Foundation (DMS-06-03890) and wish to thank Lei Ni for stimulating discussions.
Bruno Pelletier and Pierre Pudlo were supported by the French National Research Agency (ANR) under grant ANR-09-BLAN-0051-01.
Bruno Pelletier also thanks Michel Pierre for discussions related to Cheeger sets.

\bibliographystyle{abbrv}
\bibliography{cheeger}

\end{document}